\newcommand*{\mailto}[1]{\href{mailto:#1}{\nolinkurl{#1}}}
\DeclareMathOperator{\id}{Id}
\DeclareMathOperator{\meas}{meas}
\newcommand{\dott}{\, \cdot\,}
\newcommand{\epsi}{\varepsilon}
\newcommand{\quot}{{\F/\Gr}}
\newcommand{\Gr}{G}
\newcommand{\D}{\ensuremath{\mathcal{D}}}
\newcommand{\G}{\ensuremath{\mathcal{G}}}
\newcommand{\F}{\ensuremath{\mathcal{F}}}
\newcommand{\inv}{{^{-1}}}
\newcommand{\abs}[1]{\left\vert#1\right\vert}
\newcommand{\abss}[1]{\vert#1\vert}
\newcommand{\Real}{\mathbb R}
\newcommand{\Natural}{\mathbb N}
\newcommand{\norm}[1]{\left\Vert#1\right\Vert}
\newcommand{\Linf}{{L^\infty(\Real)}}
\newcommand{\muac}{\mu_{\text{\rm ac}}}
\DeclareMathOperator{\sgn}{sgn}
\newcommand{\nn}{\nonumber}
\newcommand{\mus}{\mu_{\text{\rm s}}}
\newcommand{\dx}{\,dx}
\newcommand{\PP}{\mathcal{P}}
\DeclareMathOperator{\Ima}{Im}
\DeclareMathOperator{\artanh}{artanh}
\newtheorem{theorem}{Theorem}[section]
\newtheorem{lemma}[theorem]{Lemma}
\newtheorem{definition}[theorem]{Definition}
\newtheorem{proposition}[theorem]{Proposition}
\newtheorem{remark}[theorem]{Remark}
\newtheorem{corollary}[theorem]{Corollary}
\numberwithin{equation}{section}
\begin{document}

\title[Global solutions with nonvanishing asymptotics]{A continuous interpolation between
  conservative and dissipative solutions for the two-component Camassa--Holm system}

\author[K. Grunert]{Katrin Grunert}
\address{Department of Mathematical Sciences\\ Norwegian University of Science and Technology\\ NO-7491 Trondheim\\ Norway}
\email{\mailto{katring@math.ntnu.no}}
\urladdr{\url{http://www.math.ntnu.no/~katring/}}

\author[H. Holden]{Helge Holden}
\address{Department of Mathematical Sciences\\
  Norwegian University of Science and Technology\\
  NO-7491 Trondheim\\ Norway\\ {\rm and} Centre of
  Mathematics for Applications\\ University of Oslo\\
  NO-0316 Oslo\\ Norway}
\email{\mailto{holden@math.ntnu.no}}
\urladdr{\url{http://www.math.ntnu.no/~holden/}}

\author[X. Raynaud]{Xavier Raynaud}
\address{Department of Mathematical Sciences\\
  Norwegian University of Science and Technology\\
  NO-7491 Trondheim\\ Norway}
\email{\mailto{xavierra@cma.uio.no}}
\urladdr{\url{http://folk.uio.no/xavierra/}}

\thanks{Research supported in part by the
  Research Council of Norway  and by the Austrian Science Fund (FWF) under Grant No.~J3147.}  
\subjclass[2010]{Primary: 35Q53, 35B35; Secondary: 35Q20}
\keywords{Two-component Camassa--Holm system, conservative solutions, dissipative solutions}

\begin{abstract}
  We introduce a novel solution concept, denoted $\alpha$-dissipative solutions,  that provides a continuous interpolation between conservative and dissipative solutions of the 
  Cauchy problem for the two-component Camassa--Holm system on the line with vanishing  asymptotics. All the $\alpha$-dissipative solutions are global weak solutions of  the same equation in Eulerian coordinates, yet they exhibit rather distinct behavior at wave breaking. The solutions are constructed after a transformation into Lagrangian variables, where the solution is carefully modified at wave breaking. 
\end{abstract}
\maketitle

\section{Introduction}\label{sec:intro}

We consider the Cauchy problem for the two-component
Camassa--Holm (2CH) system  given by
\begin{subequations}
  \label{eq:chsys2A}
  \begin{align}
    \label{eq:chsys21A}
    u_t-u_{txx}+\kappa u_x+3uu_x-2u_xu_{xx}-uu_{xxx}+\eta\rho\rho_x&=0,\\ \label{eq:chsys22A}
    \rho_t+(u\rho)_x&=0, 
  \end{align}
\end{subequations}
with initial data $u|_{t=0}=u_0$ and $\rho|_{t=0}=\rho_0$. Here, $\kappa\in\Real$ and
$\eta\in(0,\infty)$ are given parameters. We are interested in global weak solutions for general
initial data
\begin{equation}
  \label{eq:nonvanlimA}
  u_0\in H^1(\Real)\quad \text{ and }\quad \rho_0\in L^2(\Real).
\end{equation}
The 2CH system was introduced by Olver and Rosenau  \cite[Eq.~(43)]{OlverRosenau} (see also \cite{ChenLiuZhang,AratynGomesZimerman,Ivanov}), and derived in the context of water waves by  Constantin and Ivanov \cite{ConstantinIvanov:2008}. In this paper also the question of wave breaking is analyzed. The scalar
CH equation, which corresponds to the case where $\rho(t,x)=\rho_0(x)=0$, was introduced by Camassa
and Holm in the fundamental paper \cite{CH:93}, and its analysis has been pervasive. Other generalizations of the Camassa--Holm equation 
exist, see, e.g., \cite{ChenLiuZhang,ChenLiu2010,FuQu:09,GuanYin2010,Kuzmin}. 

The 2CH system experiences wave breaking in the sense that the spatial derivative of $u$ becomes
unbounded while keeping its $H^1(\Real)$ norm finite. This gives rise to a dichotomy between
so-called conservative and dissipative solutions, which complicates the issue of wellposedness of
the Cauchy problem. This issue has been studied extensively \cite{GHR4,fritz60,GHR5,WangHuangChen,TianWangZhou}.  Analysis of blow-up and existence of global solutions for the 2CH system can be found in, e.g., \cite{GuanKarlsenYin,GuiLiu2010,GuiLiu2011,GuanYin2011,GuoZhou2010,HolmNaraighTronci,HuYin}.
  
 In this article, we introduce a novel class of solutions parametrized by
$\alpha\in[0,1]$. The parameter $\alpha$ determines the amount of dissipation for the corresponding
class of solutions. If $\alpha=0$, there is no dissipation and we obtain the conservative solutions,
meaning that, when a collision, i.e.,  wave breaking, occurs, the energy contained in the collision is entirely
redistributed in the system after the collision. If $\alpha=1$, we obtain the (fully) dissipative
solutions, where all the energy contained in a collision vanishes from the system. The intermediate
values of $\alpha$ give the fraction of the energy contained in the collision which is
dissipated. The remaining energy is given back after the collision.

For simplicity, in this introduction, we consider first the CH equation with $\kappa=0$.  However, in
the text proper, we analyze the full 2CH system. Dissipation occurs when the solution blows up. The
problem of blow-up can be studied explicitly in the case of multipeakon solutions but since this
example is well-known, we refer to, e.g., \cite{HolRay:06b} where this is well described, rather
than presenting the details here. The upshot of the analysis is that the solution $u$ has to be
augmented by an additional variable in the form of a measure, denoted $\mu$, that describes the energy.  For $u_0\in H^1(\Real)$, we let $\mu=u_x^2 dx$. For smooth
solutions to the CH equation, the following conservation law for the energy holds
\begin{equation}
  \label{eq:conslaw}
  (u^2+u_x^2)_t + (u(u^2+u_x^2))_x = (u^3-2Pu)_x,
\end{equation}
which implies that the total energy, i.e., the $H^1(\Real)$ norm of $u$, is preserved. Here, $P$ is an
integrated term which is defined below, see \eqref{eq:CH_a}. When blow-up occurs, the energy density
$(u^2+u_x^2)\,dx$ becomes singular, that is, it becomes a measure containing a singular part. This
measure has to be augmented to the solution $u$ in order to be able to define the continuation after
blow-up.

The proper way to continue the solution after blow-up is to rewrite the equation in terms of new
variables, denoted Lagrangian variables, where the CH equation appears as a system of ordinary
differential equations taking values in a Banach space in such a way that the blow-up in the
original Eulerian variables \eqref{eq:chsys2A} evaporates
\cite{BreCons:05,BreCons:05a,HolRay:07,HolRay:09}. In the present literature the analysis has been distinct for the two
classes of solutions. Our new solution concept governed by the parameter $\alpha$ allows for a
continuous interpolation between the conservative
and dissipative solutions. At the same time it allows a uniform treatment of all cases.  We denote
these solutions as $\alpha$-dissipative solutions.

Let us describe more precisely the construction of the $\alpha$-dissipative solutions. After
applying the inverse Helmholtz operator $(1-\partial_{xx})^{-1}$, the CH equation can be rewritten as
\begin{equation} \label{eq:CH_a}
u_t+uu_x+P_x=0,\quad P-P_{xx}=u^2+\frac12 u_x^2.
\end{equation}
The pattern of  blow-up is known \cite{cons_esc1}: The solution remains continuous
while the derivative $u_x$ tends to minus infinity at the blow-up point. For this reason, the
blow-up for the CH equation is often characterized as \textit{wave breaking} and we will use this
term extensively in this paper. Wave breaking occurs precisely when the characteristics,
$y=y(t,\xi)$, given by
\begin{equation} \label{eq:chardefintro}
  y_t(t,\xi)=u(t,y(t,\xi)),
\end{equation}
have a critical point, i.e., $y_\xi(t,\xi)=0$.  For a given ``particle'', labeled by $\xi$, the
characteristic $y(t,\xi)$ denotes the trajectory of $\xi$ and
\begin{equation*}
  \tau_1(\xi)=\begin{cases}
    \sup \{t\in \Real_+\mid y_\xi(t^\prime,\xi)>0 \text{ for all } 0< t^\prime <t\},\quad & \text{ if $\{\dots\}\neq\emptyset$}, \\
    \infty, \quad &  \text{ otherwise},
  \end{cases}
\end{equation*}
denotes the time of the first wave breaking for $\xi$. For dissipative solutions, we would set
$y_\xi(t,\xi)=0$ for $t>\tau_1(\xi)$ while for conservative solutions we would continue to use
\eqref{eq:chardefintro}. Typically in a collision taking place at time $t_c$, the trajectories of
different particles meet, say $y(t_c,\xi_1)=y(t_c,\xi)=y(t_c,\xi_2)$ for $\xi\in[\xi_1, \xi_2]$. In
the dissipative case, the particles remain together. The energy which, in the case of conservative
solutions sends  the collided particles  apart, is entirely dissipated in the dissipative case. To keep
track of the part of the energy that accumulates at collision points, we introduce the function
\begin{equation*}
  h(t,\xi)=u_x^2(t,y(t,\xi))y_\xi(t,\xi).
\end{equation*}
The time evolution of $h$ is given by
\begin{equation*}
  h_t(t,\xi)=2(U^2(t,\xi)-P(t,\xi))U_\xi(t,\xi),
\end{equation*}
where the function
\begin{equation}
  \label{eq:lagvel}
  U(t,\xi)=u(t,y(t,\xi))
\end{equation}
denotes the Lagrangian velocity. We write the CH equation as a system of ordinary differential
equations in Lagrangian coordinates
\begin{equation}\label{eq:ode1}
  \begin{aligned}
    y_t& =U , & U_t& =-Q,  & h_t& =2(U^2-P)U_\xi, \\
    y_{t,\xi}&=U_\xi, &  U_{t,\xi}&= \frac12 h+(U^2-P)y_\xi, & &
  \end{aligned}
\end{equation}
where $P$ and $Q$ are integrated terms, enjoying higher regularity, given by \eqref{eq:Plag2} and
\eqref{eq:Qlag2}, respectively.  The control on the level of dissipation, which depends on $\alpha$,
is determined by the Lagrangian variables at the times of collision. At collision time $\tau_1(\xi)$, for
the particle $\xi$, we decompose $h$ in two parts
\begin{equation*}
  h(\tau_1(\xi), \xi) = \alpha h(\tau_1(\xi),\xi) + (1-\alpha) h(\tau_1(\xi),\xi).
\end{equation*}
For $\alpha$-dissipative solutions, the first part is dissipated while the second is redistributed
to the system.  We introduce $\bar h$, which denotes the effective part of the energy, that is, the
part which effectively amounts for the energy that is left after a collision. Before the first
collision, $h$ and $\bar h$ coincide, but at collision time, $\bar h$ is discontinuous and we set
\begin{equation}
  \label{eq:discontbarh1}
  \bar h(\tau_1(\xi),\xi) = (1-\alpha)\lim_{t\uparrow \tau_1(\xi)}\bar h(t,\xi),
\end{equation}
while $h$ remains continuous in time. In fact, it should be enough only to  consider $\bar h$ instead
of $h$, however, the variable $h$, because of its time continuity property, is so useful in the proofs
that we keep it as one of the variables for the governing equations. The same particle may
experience additional   collisions later. Thus, we construct the sequence
\begin{equation*}
  0< \tau_1(\xi)<\tau_2(\xi)<\dots <\tau_j(\xi)<\cdots
\end{equation*}
of collision times. For a given $\xi$, the sequence $\tau_i(\xi)$ does not accumulate and there
exists a lower bound for the time separating two collisions, see Corollary \ref{cor:tau}. At each
$\tau_j(\xi)$ we reset $\bar h$, i.e.,
\begin{equation}
  \label{eq:discbarh}
  \bar h(\tau_j(\xi),\xi) = (1-\alpha)\lim_{t\uparrow \tau_j(\xi)}\bar h(t,\xi).
\end{equation}
The equations in Lagrangian coordinates we will consider are given by
\begin{subequations}\label{eq:sysdiss2}
  \begin{align}
    y_t& =U, \\ U_t&=-Q,\\
    y_{t,\xi}&=U_\xi,\\
    U_{t,\xi}&= \frac12 \bar h+(U^2-P)y_\xi,\\
    h_t& =2(U^2-P)U_\xi, 
  \end{align}
\end{subequations}
where $P$ and $Q$ are given by \eqref{eq:Plag1} and \eqref{eq:Qlag1}, respectively.  The initial characteristics are given by 
$y(\xi)=\sup\left\{y \mid \mu((-\infty,y))+y<\xi\right\}$. Note that, since
$\bar h$ is discontinuous, the system of ordinary differential equations \eqref{eq:sysdiss2} is
discontinuous.

Now we want to obtain a global solution of the system \eqref{eq:sysdiss2}, properly formulated. We
consider the vector $\Theta=(\zeta,U,\zeta_\xi,U_\xi, \bar h , h)\in L^\infty(\Real)\times E^5$
where $E=L^2(\Real)\cap L^\infty(\Real)$, and, for technical reasons, we prefer to work with
$\zeta=y-\id$.  In order to obtain a global solution that respects the intrinsic structure of the system, we have to restrict the initial data
appropriately, and we only consider initial data in the set $\G$ given by Definition
\ref{def:G}. Short time existence is proved by an iteration argument (see Theorem \ref{th:short0}),
and existence of a global solution in  $\G$, is proved in Theorem \ref{th:global}.

The next task is then to return to Eulerian coordinates where the solution $(u(t),\mu(t))$ for each
positive time $t$ satisfies $u(t)\in H^1(\Real)$, as well as being a weak, global solution of
\eqref{eq:CH_a}, and $\mu(t)$ is a nonnegative Radon measure such that
$\muac(t)=u_x^2(t,\dott)\,dx$.  When $u$ is a smooth solution, $\mu=\muac$ but, at a blow-up time
$t_c$, the singular part of $\mu$, which we denote $\mus$, amounts for the singular part of the
energy, as we have
\begin{equation*}
  \lim_{t\uparrow t_c}\big((u^2(t,x)+u_x^2(t,x))\,dx\big) = \mus(t_c) +  (u^2(t_c,x)+u_x^2(t_c,x))\,dx.
\end{equation*}

The next problem is that of \emph{relabeling}; there are several
distinct Lagrangian solutions corresponding to one and the same solution in Eulerian variables,
similar to the fact that there are several distinct parametrizations of one and the same curve. We
identify the precise set $G$ of relabeling functions, see Definition \ref{def:Grelab}, and we show that the flow respects the relabeling, see Theorem \ref{th:sgS}.
The return to Eulerian variables is contained in Definition \ref{th:umudef}, where we
define\footnote{We denote the push-forward of the measure $d\sigma$ by the function $y$ as
  $\nu=y_\#(d\sigma)$ where $\nu(A)=\sigma\big(y^{-1}(A)\big)$.}
\begin{align*}
  u(x)&=U(\xi)\text{ for any }\xi\text{ such that  }  x=y(\xi),\\
  \mu&=y_\#(\bar h(\xi)\,d\xi).
\end{align*}
Finally, we show that the solution is a global weak solution of the CH equation, and that we have (see Theorem \ref{th:energi})
\begin{equation}
  (u^2+\mu)_t+(u(u^2+\mu))_x\le(u^3-2Pu)_x
\end{equation}
in the sense of distributions.

Until now, we have focused on the CH equation, that is, the case where $\rho(t,x)=\rho_0(x)=0$ which
implies that \eqref{eq:chsys21A} and \eqref{eq:chsys22A} decouple. For the 2CH system in the general
case, when $\rho_0 \neq 0$, we observe the same regularisation properties as in the conservative
case presented in \cite{GHR4}, namely that, if $\rho_0(x)>0$ for all $x$, then the solution retains the same
level of regularity as the one it has initially, no collision occurs and
\begin{equation}
  \label{eq:consenerg}
  E(t) = E(0)
\end{equation}
for all times $t$, where $E(t)=\sqrt{\norm{u(t,\cdot)}_{H^1}^2+\norm{\rho(t,\cdot)}_{L^2}^2}$. For general initial data, if $\alpha = 0$, the identity \eqref{eq:consenerg} holds only for \emph{almost} every time $t$
while, if $\alpha>0$, the function $E(t)$ is then non-increasing almost
everywhere, that is,
\begin{equation}
  \label{eq:dissenerg}
  E(t)\leq E(t')
\end{equation}
for $t>t'$ where $t$ and $t'$ belong to a given set of full measure, see Theorems \ref{th:energi}
and \ref{thm:find_alpha}.

\medskip Finally, we present in Section \ref{sec:AP} detailed calculations for the explicit example
of a peakon-antipeakon solution. Here one can see the interplay between Eulerian and Lagrangian
variables, the role and use of relabeling, as well as an explicit description of the behavior at
wave breaking.

\section{Lagrangian setting}\label{sec:lag}

We consider the Cauchy problem for the two component Camassa--Holm system with arbitrary 
$\kappa\in\Real$ and $\eta\in(0,\infty)$, given by
\begin{subequations}
  \label{eq:chsys2}
  \begin{align}
    \label{eq:chsys21}
    u_t-u_{txx}+\kappa u_x+3uu_x-2u_xu_{xx}-uu_{xxx}+\eta\rho\rho_x&=0,\\ \label{eq:chsys22}
    \rho_t+(u\rho)_x&=0, 
  \end{align}
\end{subequations}
with initial data $u|_{t=0}=u_0$ and $\rho|_{t=0}=\rho_0$, such that $u\in H^1(\Real)$ and $\rho\in L^2(\Real)$. A close look reveals that, if $(u(t,x),\rho(t,x))$ is a solution of the two-component Camassa--Holm system \eqref{eq:chsys2}, then we easily find that 
\begin{equation}
  v(t,x)=u(t,x),\quad \text{ and }\quad \tau(t,x)=\sqrt{\eta}\rho(t,x), 
\end{equation}
solves the two-component Camassa--Holm system with $\eta=1$. Therefore, without loss of generality,
we assume in what follows that $\eta=1$. Our analysis does not extend to the case with $\eta$ negative. For results in that 
case, see, e.g., \cite{eschlechyin:07}. In addition, we
only consider the case $\kappa=0$ as one can make
the same conclusions for $\kappa\neq0$ with
slight modifications.\footnote{The general case with $\kappa\in\Real$, which is related to the case where the solution $u,\rho$ has non-vanishing asymptotics, is treated in \cite{GHR3,GHR4,GHR5}.}

In the remainder of this section we will introduce the set of Lagrangian coordinates we want to work with and the corresponding Banach space.

\subsection{Reformulation of the 2CH system in Lagrangian coordinates}

The 2CH system with $\kappa=0$ can be rewritten as the following system in Eulerian coordinates\footnote{For $\kappa$ nonzero \eqref{eq:ch2} is simply 
  replaced by $P-P_{xx}=u^2+\kappa u+\frac12 u_x^2+\frac12 \rho^2$.} 
\begin{subequations}\label{eq:ch}
  \begin{align}\label{eq:ch1}
    u_t+uu_x +P_x&=0,\\ \label{eq:ch3}
    \rho_t+(u\rho)_x&=0,\\ \label{eq:ch2}
    P-P_{xx}&=u^2+\frac12 u_x^2+\frac12\rho^2,
  \end{align}
\end{subequations}
where $P$ and $P_x$ are given by 
\begin{equation}\label{eq:repPeul}
  P(t,x)=\frac12\int_\Real e^{-\vert x-z\vert}\big(u^2+\frac12 u_x^2+\frac12 \rho^2\big)(t,z)dz,
\end{equation}
and
\begin{equation}\label{eq:Pxeul}
  P_x(t,x)=-\frac12\int_\Real \sgn{(x-z)}e^{-\vert x-z\vert}\big(u^2+\frac12 u_x^2+\frac12 \rho^2\big)(t,z) dz.
\end{equation}

In order to reformulate the system \eqref{eq:ch} in Lagrangian variables we define the characteristics $y(t,\xi)$ as the solution of 
\begin{equation}\label{eq:chardef}
  y_t(t,\xi)=u(t,y(t,\xi))
\end{equation}
for a given $y(0,\xi)$. The Lagrangian velocity is given by $U(t,\xi)=u(t,y(t,\xi))$ and we find using \eqref{eq:ch1} that 
\begin{equation}
  U_t(t,\xi)=-Q(t,\xi),
\end{equation}
where $Q(t,\xi)=P_x(t,y(t,\xi))$ is given by 
\begin{equation}\label{eq:Qlag2}
  Q(t,\xi)= -\frac14 \int_\Real \sgn{(\xi-\eta)}e^{-\vert y(t,\xi)-y(t,\eta)\vert}(2U^2y_\xi+h)(t,\eta)d\eta,
\end{equation}
where
we have introduced $h=(u_x^2+\rho^2)\circ y\, y_\xi$, or
\begin{equation} \label{eq:h}
  h(t,\xi)=\big(u_x^2(t,y(t,\xi))+\rho^2(t,y(t,\xi))\big)y_\xi(t,\xi).
\end{equation}
The time evolution of $h(t,\xi)$ is given by 
\begin{equation}
  h_t(t,\xi)=2(U^2(t,\xi)-P(t,\xi))U_\xi(t,\xi), 
\end{equation}
where $P(t,\xi)=P(t,y(t,\xi))$ is given by 
\begin{equation}\label{eq:Plag2}
  P(t,\xi)=\frac14 \int_\Real e^{-\vert y(t,\xi)-y(t,\eta)\vert}(2U^2y_\xi+h)(t,\eta)d\eta. 
\end{equation}
Last, but not least, the Lagrangian density
\begin{equation}
  r(t,\xi)=\rho\circ y\, y_\xi(t,\xi)=\rho(t,y(t,\xi))y_\xi(t,\xi) \label{eq:r}
\end{equation}
is preserved  with respect to time, i.e., 
\begin{equation}\label{eq:rt}
  r_t=0,
\end{equation}
according to \eqref{eq:ch3}.

We have formally reformulated the 2CH system \eqref{eq:ch} in Eulerian coordinates  as the following system of ordinary differential equations in Lagrangian variables
\begin{subequations}\label{eq:sysdiss1}
  \begin{align}
    y_t& =U, \\
    U_t&=-Q,\\
    y_{t,\xi}&=U_\xi,\\
    U_{t,\xi}&= \frac12 h+(U^2-P)y_\xi,\\
    h_t& = 2(U^2-P)U_\xi, \\
    r_t&=0,
  \end{align}
\end{subequations}
where $P$ and $Q$ are given by  \eqref{eq:Plag2} and \eqref{eq:Qlag2}, respectively.

\subsection{The new solution concept: $\alpha$-dissipative solutions}

Wave breaking for the 2CH system means that $u_x$ becomes pointwise unbounded from below, which is equivalent, in this case, to saying that $y_{\xi}$ becomes zero. Let therefore $\tau_1(\xi)$ denote the first time when $y_\xi(t,\xi)$ vanishes at the point $\xi$, i.e.,
\begin{equation}\label{eq:taudef}
  \tau_1(\xi)=\sup \{t\in \Real_+\mid y_\xi(t^\prime,\xi)>0 \text{ for all } 0< t^\prime <t\}
\end{equation}
if there exists some $t>0$ such that $y_\xi(t^\prime,\xi)>0$ for all $t'\in(0,t)$ and $y_\xi(t,\xi)=0$. 
Otherwise we set $\tau_1(\xi)=\infty$. For conservative solutions we would continue $y_\xi(t,\xi)$ past wave
breaking according to the definition \eqref{eq:chardef}, while for dissipative solutions one sets
$y(t,\xi)$ constant in $\xi$ (not in time), i.e., $y_\xi(t,\xi)=0$, after wave breaking. It turns
out that the proper way to interpolate between the two solutions is by using the variable $h(t,\xi)$
given by \eqref{eq:h}. 
For $\alpha\in[0,1]$, we extend the solution past wave breaking by instantaneously  reducing the function $h(t,\xi)$ by a factor $(1-\alpha)$ at wave breaking. More precisely, we 
introduce an extra energy variable, $\bar h$, which corresponds to the
energy which is actually contained in the system and which coincides with $h$ until wave breaking occurs for the first time. At each collision, $\bar h$ is going to be discontinuous in time (for $\alpha>0$) as we set\footnote{We use the notation $\Phi(x\pm0)=\lim_{\varepsilon\downarrow0}\Phi(x\pm\varepsilon)$.}
\begin{equation*}
  \bar h(\tau_1(\xi), \xi) = (1 - \alpha) \bar h (\tau_1(\xi)-0, \xi).
\end{equation*}
The energy variable $h$
remains continuous in time as we set
\begin{equation*}
  h(\tau_1(\xi), \xi) = h (\tau_1(\xi)-0, \xi).
\end{equation*}
We define by induction the times $\tau_n(\xi)$, for $\xi$ fixed, where collisions occur. Let
\begin{equation}\label{eq:taudef_rec}
  \tau_n(\xi) = \sup \{t\in (\tau_{n-1}(\xi),\infty) \mid y_\xi(t^\prime,\xi)>0 \text{ for all } 
  \tau_{n-1}(\xi)< t^\prime < t\},
\end{equation}
if there exists some $t>\tau_{n-1}(\xi)$ such that $y_\xi(t^\prime,\xi)>0$ for all $t'\in(\tau_{n-1}(\xi),t)$ and $y_\xi(t,\xi)=0$. We set $\tau_n(\xi)=\infty$ otherwise. For convenience we let $\tau_0(\xi)=0$ for all $\xi\in\Real$.  Then, as above, we impose 
\begin{equation}\label{eq:alpdiss}
  \bar h(\tau_n(\xi), \xi) = (1 - \alpha) \bar h (\tau_n(\xi)-0, \xi)\quad
  \text{ and }\quad  
  h(\tau_n(\xi), \xi) = h (\tau_n(\xi)-0, \xi).
\end{equation}
We denote by $l_j$ the change in $\bar h$ due to the collision, that is,
\begin{equation}
  \label{eq:deflj}
  l_j(\xi) = \bar h (\tau_j(\xi)-0, \xi)-\bar h(\tau_j(\xi), \xi) = \alpha \bar h (\tau_j(\xi)-0, \xi).
\end{equation}

\begin{remark}
  The sequence $\tau_n(\xi)$ is increasing and  can a priori accumulate. However, we will show
  that this does not happen, see Corollary~\ref{cor:tau}. 
\end{remark}

\begin{definition}\label{def:sol}  An $\alpha$-dissipative solution in Lagrangian coordinates is given by the
  functions $(y, U,y_\xi,U_\xi, \bar h, h, r)$ such that
  \begin{equation*} y-\id \in L^\infty([0,T], W^{1,\infty}(\Real)),\ U \in L^\infty([0,T], H^1(\Real)),
  \end{equation*}
\begin{equation*} y_\xi - 1,\ U_\xi, \ r \in W^{1,\infty}([0,T], L^2(\Real)\cap L^\infty(\Real))
  \end{equation*} 
\begin{equation*}
\bar h\in L^\infty([0,T], L^2(\Real)),\ h\in W^{1,\infty}([0,T], L^1(\Real)\cap L^\infty(\Real))
\end{equation*}
  and measurable functions $\tau_1(\xi)<\tau_2(\xi)<\dots$,  either finitely many or $\tau_n(\xi)\to\infty$ as $n\to\infty$, given by \eqref{eq:taudef} and \eqref{eq:taudef_rec}, which satisfy, for almost every $\xi\in\Real$,
  \begin{subequations}\label{eq:sysdiss}
    \begin{align} 
    y_t(t,\xi)& =U(t,\xi), \\
    U_t(t,\xi)&=-Q(t,\xi),\\
      y_{t,\xi}(t,\xi)&=U_\xi(t,\xi),\\ U_{t,\xi}(t,\xi)&= \frac12 \bar
      h(t,\xi)+(U^2(t,\xi)-P(t,\xi))y_\xi(t,\xi),\\
       \label{eq:sysdiss-4}
      h_t(t,\xi)& =2(U^2(t,\xi)-P(t,\xi))U_\xi(t,\xi), \\
       \label{eq:sysdiss-5}
      \bar h_t(t,\xi)& =h_t(t,\xi),\\ r_t(t,\xi)&=0, \\
   \intertext{for $t\in[\tau_{n-1}(\xi), \tau_{n}(\xi))$ and}
    \label{eq:defderatjump}
    X(\tau_n(\xi),\xi) &= X(\tau_n(\xi)-0,\xi), \\
    \bar{{h}}(\tau_n(\xi),\xi) &=(1- \alpha)\bar{{h}}(\tau_n(\xi)-0,\xi), \label{eq:defderatjumpA}
  \end{align}
 \end{subequations}
  for $X=(y,  U,  y_\xi,  U_\xi, h, r)$. In
  \eqref{eq:sysdiss}, the functions $P$ and $Q$ are given by 
  \begin{align}\label{eq:Plag1} 
  P(t,\xi)&=\frac14 \int_{\Real} e^{-\vert
      y(t,\xi)-y(t,\eta)\vert}(2U^2y_\xi+\bar h)(t,\eta)d\eta  \\
  \intertext{and}
 \label{eq:Qlag1} 
 Q(t,\xi)&=-\frac14 \int_{\Real} \sgn{(\xi-\eta)}e^{-\vert
      y(t,\xi)-y(t,\eta)\vert}(2U^2y_\xi+\bar h)(t,\eta)d\eta,
  \end{align} 
  respectively.
\end{definition}

\begin{remark}
 Note that due to the above considerations, we can represent $\bar h(t,\xi)$ in the following way
\begin{equation}\label{eq:tautau}
  \bar h(t,\xi)=
    h(t,\xi)-\sum_{j=0}^n l_j(\xi), \quad \text{for }t\in[\tau_n(\xi),\tau_{n+1}(\xi)), 
\end{equation}
where we recursively define $l_j(\xi)=\alpha\big(h(\tau_j(\xi),\xi)-\sum_{k=0}^{j-1} l_k(\xi)\big)$ for $j\in\Natural$, and $ l_0(\xi)= h(0,\xi)-\bar h(0,\xi)\geq 0$ and 
$\tau_0(\xi)=0$. In particular, we have $0\leq\bar h(t,\xi)\leq h(t,\xi)$.
\end{remark}

\begin{remark} We will here try to explain the strategy behind the lengthy existence proof in Lagrangian variables.  Our starting point is the formulation  \eqref{eq:sysdiss1} in Lagrangian variables. We replace the mixed derivatives $y_{t,\xi}$ and  $U_{t,\xi}$ by new variables, namely $q=y_\xi$ and $w=U_\xi$, which turns \eqref{eq:sysdiss1} into a system of ordinary differential equations. We show the existence of a solution by an iterative argument, as part of  the proof of Theorem \ref{th:short0}. To secure a global solution and to make sure that the underlying structure is preserved, e.g., that the functions $q$ and $w$ satisfy $q=y_\xi$ and $w=U_\xi$, respectively, we have to restrict the set of initial data to the set $\G$, cf., Definition \ref{def:G}. The existence of global solutions then follows in the standard way by showing that  the solution remains bounded.  This would then yield the solution in Lagrangian variables in the conservative case. However, to construct the $\alpha$-dissipative solutions we need to monitor  $y_{\xi}(t,\xi)$ carefully  as a function of $t$ for each fixed $\xi$. At the first occasion when $y_{\xi}(t,\xi)=0$, that is, when $t=\tau_1(\xi)$, we read off the values of the dependent variables, and scale the variable $\bar h$ (which equals $h$ up to $\tau_1(\xi)$) by the factor $1-\alpha$. The system of ordinary differential equations is then restarted at $t=\tau_1(\xi)$ and runs according to \eqref{eq:sysdiss} until the next time $y_{\xi}(t,\xi)$ vanishes. Again the function  $\bar h$ is rescaled, and the system restarted.  This construction is performed for each $\xi\in \Real$. As the system of ordinary differential equations is discontinuous, the global existence proof requires careful estimates, see Lemmas \ref{lem:2.3}, \ref{lem:breakN},  \ref{lem:PQ}, \ref{lem:G}--\ref{lem:contrgamma}, \ref{lem:globest}.  

The function $g$, introduced below in Definition \ref{def:Omega}, plays a subtle role in our considerations. It is used in Lemma~\ref{lem:G}, when identifying $\kappa_{1-\gamma}$, cf.~\eqref{eq:defKgamma}, as the set of points which will experience wave breaking in the near future. However, it will play an even more vital role in the (future) construction of a Lipschitz metric for this system, see, e.g., \cite{BHR, GHRb:10}. A close look at $g$ and $\bar h$ reveals that the function $\bar h$ drops suddenly at breaking time while the function $g$ models the loss of energy in a continuous way. Thus $g$ will play a major role in (future) investigations about the stability of solutions.

\end{remark}

We introduce the following notation for the Banach spaces that are frequently used.  Let
\begin{equation*}
  E= L^2(\Real)\cap L^\infty(\Real), 
\end{equation*}
together with the norm
\begin{equation*}
  \norm{f}_{E}=\norm{f}_{L^2}+\norm{f}_{L^\infty},
\end{equation*}
and let
\begin{align*}
  W&=\left[ L^2(\Real)\right]^4,&
  \bar W&= E^4,\\
  V&= L^\infty(\Real) \times L^2(\Real)\times W,& \bar V& = L^\infty(\Real)\times E\times \bar W.
\end{align*}
For any function $f\in C([0,T], B)$ for $T\geq 0$ and $B$ a normed space, we denote
\begin{equation*}
  \norm{f}_{L_T^1B}=\int_0^T\norm{f(t,\dott)}_B dt\quad \text{ and }\quad \norm{f}_{L_T^\infty B}=\sup_{t\in[0,T]}\norm{f(t,\dott)}_B.
\end{equation*}
\begin{definition}\label{def:Omega}
  For $x=(x_1,\dots,x_7)\in \Real^7$, we define the functions $g_1,g_2,g\colon\Real^7\to \Real$ by
   \begin{align*}
    g_1(x)&=\vert x_4\vert +2x_3,\\
    g_2(x)&= x_3+x_5,
  \end{align*}
  and 
  \begin{equation}\label{eq:defg}
    g(x)=\begin{cases} 
      \alpha g_1(x)+(1-\alpha)g_2(x), &\quad \text{if $x\in \Omega_1$,}\\
      g_2(x), & \quad \text{otherwise,} \\
    \end{cases}
  \end{equation}
  where  $\Omega_1$ is the set where $g_1\leq g_2$, $x_4$ is nonpositive, and $x_7=0$, thus
  \begin{equation*}
    \Omega_1=\{ x\in \Real^7\mid \vert x_4\vert+2x_3\leq x_3+x_5 \text{, } x_4\leq 0, \text{ and } x_7=0\}.
  \end{equation*}
 \end{definition}

We  identify $x=(x_1,\dots,x_7)$ with $\Theta=(y, U,y_\xi, U_\xi,\bar h, h, r)$.

\begin{remark} \label{rem:g}
  In the case of conservative solutions, i.e., $\alpha=0$, we have $0<g(\Theta)(t,\xi)=g_2(\Theta)(t,\xi)$ and
  $h(t,\xi)=\bar h(t,\xi)$ for all $\xi\in\Real$ and $t\in\Real$. In the case of dissipative
  solutions, i.e., $\alpha=1$, we infer $0<g(\Theta)(t,\xi)$ and $h(t,\xi)=\bar h(t,\xi)$ before wave
  breaking, while $0=g(\Theta)(t,\xi)$ and $\bar h(t,\xi)=0$ thereafter. The function $g(\Theta)(t,\xi)$ is
  introduced in such a way that it describes the loss of energy in a continuous way, in contrast to
  $\bar h(t,\xi)$,  which drops suddenly at wave breaking.
\end{remark}

\begin{definition} \label{def:G} The set $\G$ consists of all $\Theta=(y,U, y_\xi,U_\xi,\bar h, h, r)$ such that
  \begin{subequations}\label{eq:lagcoord}
    \begin{align}
      \label{eq:lagcoord1}
      &X=(\zeta, U,\zeta_\xi, U_\xi, h, r)\in \bar V,\\
      \label{eq:lagcoord2}
      &g(\Theta)-1\in E,\\  
      \label{eq:lagcoord8}
      & h\in L^1(\Real),\\
      \label{eq:lagcoord3}
      &y_\xi\geq 0, \quad h\geq 0, \quad \bar h\geq 0 \text{  almost everywhere}, \\
      \label{eq:lagcoord4}
      &\lim_{\xi\to -\infty} \zeta(\xi)=0,\\
      \label{eq:lagcoord5}
      &\frac{1}{y_\xi+h}\in L^\infty(\Real),\\ 
      \label{eq:lagcoord6}
      &y_\xi \bar h=U_\xi^2+r^2 \text{ almost everywhere},\\ 
      \label{eq:lagcoord7}
      & h\geq \bar h \text{ almost everywhere},
    \end{align}
  \end{subequations}
  where we denote $y(\xi)=\zeta(\xi)+\xi$.
\end{definition}
The condition \eqref{eq:lagcoord4} will be valid as long as the solution exists since in that case
we must have $\lim_{\xi\to-\infty}U(t,\xi)=0$ by construction. In addition, it should be noted that,
due to the definition of $g(\Theta)$, the relation \eqref{eq:lagcoord2} is valid for any $\Theta$ that
satisfies \eqref{eq:lagcoord1} since $0\leq \bar h\leq h$. 

Making the identifications $q=y_\xi$ and $w=U_\xi$, we obtain
\begin{subequations}\label{eq:ODEsys}
  \begin{align}
    y_t& =U, \\ 
    U_t&=-Q(\Theta),\\
    q_{t}&=w,\\
    w_{t}&= \frac12 \bar h+(U^2-P(\Theta))q,\\
    h_t& =2(U^2-P(\Theta))w,\\
    r_t&=0,
  \end{align}
\end{subequations}
where $P(\Theta)$ and $Q(\Theta)$ are given by 
\begin{align}\label{eq:Plag3} 
  P(t,\xi)&=\frac14 \int_{\Real} e^{-\vert y(t,\xi)-y(t,\eta)\vert}(2U^2q+\bar h)(t,\eta)d\eta, \\
\intertext{and}
\label{eq:Qlag3}
  Q(t,\xi)&=-\frac14 \int_{\Real} \sgn{(\xi-\eta)}e^{-\vert y(t,\xi)-y(t,\eta)\vert}(2U^2q+\bar h)(t,\eta)d\eta,
\end{align}
respectively.

The definition of $\tau_1$ given by \eqref{eq:taudef} (after replacing $y_\xi$ by the corresponding
variable $q$) is not appropriate for $q\in C([0,T],\Linf)$, and, in addition, it is not clear from
this definition if $\tau_1$ is measurable. Thus we replace this definition by the following one. Let
$\{t_i\}_{i=1}^\infty$ be a dense countable subset of $[0,T]$.  Define
\begin{equation*}
  A_t=\bigcup_{n\in \Natural}\bigcap_{t_i\leq
    t}\Big\{\xi\in\Real\mid q(t_i,\xi)>\frac1n\Big\}. 
\end{equation*}
The sets $A_t$ are measurable for all $t$, and we have $A_{t'}\subset A_t$ for $t\leq t'$. We
consider a dyadic partition of the interval $[0,T]$ (that is, for each $n$, we consider the set
$\{2^{-n}iT\}_{i=0}^{2^n}$) and set
\begin{equation*}
  \tau^n_1(\xi)=\sum_{i=0}^{2^n}\frac{iT}{2^n}\chi_{i,n}(\xi),
\end{equation*}
where $\chi_{i,n}$ is the indicator function of the set $A_{2^{-n}iT}\setminus A_{2^{-n}(i+1)T}$.
 The function $\tau^n_1$ is by construction
measurable. One can check that $\tau^n_1(\xi)$ is increasing with respect to $n$, it is also bounded
by $T$. Hence, we can define
\begin{equation*}
  \tau_1(\xi)=\lim_{n\to\infty}\tau^n_1(\xi),
\end{equation*}
and $\tau_1$ is a measurable function. The next lemma gives the main property of $\tau_1$.

\begin{lemma}\label{lem:2.3}
  If, for every
  $\xi\in\Real$, $q(t,\xi)$ is positive and continuous with
  respect to time, then 
  \begin{equation}
    \label{eq:sectau}
    \tau_1(\xi)=
\begin{cases}
\sup\{t\in\Real^+\mid q(t',\xi)>0\text{ for all }0< t'<t\}, & \text{ if $\{\dots\}\neq\emptyset$}, \\
\infty, & \text{otherwise}. 
\end{cases}
  \end{equation}
  that is, we retrieve the definition \eqref{eq:taudef}.
\end{lemma}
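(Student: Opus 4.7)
The plan is to reduce both the dyadic definition of $\tau_1$ and the supremum formula in \eqref{eq:sectau} to a single continuous-time characterization of the sets $A_t$, and then to compare the two via a simple limiting argument. Fix $\xi\in\Real$, and let $\tau^*(\xi)$ denote the right-hand side of \eqref{eq:sectau}.

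First, I would exploit the continuity of $q(\cdot,\xi)$ to establish the clean description
\begin{equation*}
  \xi\in A_t\quad\Longleftrightarrow\quad \min_{s\in[0,t]} q(s,\xi)>0.
\end{equation*}
The forward direction uses that $\{t_i\}$ is dense in $[0,T]$: a uniform bound $q(t_i,\xi)>1/n$ on all $t_i\leq t$ extends by continuity to a bound $q(s,\xi)\geq 1/n>0$ on the whole interval $[0,t]$. The reverse direction is immediate by Weierstrass, picking $n$ larger than the reciprocal of the minimum. This is the step where the continuity hypothesis on $q(\cdot,\xi)$ is genuinely used.

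Next I would match this to the supremum formula. For any $t<\tau^*(\xi)$, the definition of $\tau^*(\xi)$ yields $t''>t$ with $q(s,\xi)>0$ for every $s\in(0,t'')$; combining this with the positivity hypothesis at $s=0$ and compactness, one obtains $\min_{[0,t]}q(\cdot,\xi)>0$, hence $\xi\in A_t$. Conversely, for any $t>\tau^*(\xi)$ with $\tau^*(\xi)<\infty$, the very definition of the supremum forces some $s\in(0,t)$ at which $q(s,\xi)\leq 0$, and continuity promotes this to $q(s_0,\xi)=0$ for some $s_0\in[0,t]$, so $\xi\notin A_t$. Thus $\tau^*(\xi)=\sup\{t\in[0,T]:\xi\in A_t\}$, with the convention that this supremum equals $T$ (and eventually $\infty$ as $T$ is increased) when $\xi\in A_T$.

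Finally, I would connect this to the dyadic approximation $\tau_1^n(\xi)$. Since $t\mapsto A_t$ is decreasing and the indicators $\chi_{i,n}$ detect the unique dyadic window in which the transition $\xi\in A_{iT/2^n}$, $\xi\notin A_{(i+1)T/2^n}$ occurs, one verifies that $\tau_1^n(\xi)$ is the largest dyadic multiple $iT/2^n$ not exceeding $\tau^*(\xi)$, so that
\begin{equation*}
  \tau^*(\xi)-2^{-n}T\leq \tau_1^n(\xi)\leq \tau^*(\xi).
\end{equation*}
Letting $n\to\infty$ gives $\tau_1(\xi)=\tau^*(\xi)$ when $\tau^*(\xi)\leq T$, while in the remaining case $\xi\in A_T$, one lets $T$ grow to conclude $\tau_1(\xi)=\infty$. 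The main obstacle I anticipate is a clean treatment of the endpoint $t=\tau^*(\xi)$ and of the situation in which wave breaking never occurs on $[0,T]$; both hinge on the monotonicity of $A_t$ and the fact that at most one dyadic subinterval straddles the supremum, so these can be dispatched by a direct inspection rather than any new analytic input.
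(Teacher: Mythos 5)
Your proof is correct. Note that the paper itself offers no argument for Lemma \ref{lem:2.3} beyond the citation to \cite{HolRay:09}, and the route you take is precisely the one that reference follows: use density of $\{t_i\}$ and continuity of $q(\cdot,\xi)$ to show $\xi\in A_t$ if and only if $\min_{s\in[0,t]}q(s,\xi)>0$, use the monotonicity $A_{t'}\subset A_t$ for $t\le t'$ to identify $\sup\{t\mid\xi\in A_t\}$ with the supremum in \eqref{eq:sectau}, and then read off that $\tau_1^n(\xi)$ is the largest dyadic point $iT/2^n$ below that supremum, so the limit recovers it. Two details are worth stating explicitly to make the last step airtight: when $\tau^*(\xi)<\infty$, continuity forces $q(\tau^*(\xi),\xi)=0$, so $\xi\notin A_{\tau^*(\xi)}$ and exactly one dyadic window $[iT/2^n,(i+1)T/2^n)$ carries the transition (with the convention $A_t=\emptyset$ for $t>T$ taking care of the term $i=2^n$); and the hypothesis that $q(t,\xi)$ is ``positive'' must be read as positivity at $t=0$, exactly as you use it to get $\min_{[0,t]}q(\cdot,\xi)>0$ for $t<\tau^*(\xi)$, since positivity for all times would render the lemma vacuous. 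Your remark that the value $\tau_1(\xi)=\infty$ in the absence of breaking on $[0,T]$ is obtained by letting $T$ grow (equivalently, running the construction on all of $\Real_+$) is also the intended reading.
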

\begin{proof}  See \cite{HolRay:09}.
 \end{proof}

One can represent $\tau_n(\xi)$ with $n=2,3,\dots$ similarly. Indeed, let $\{t_i\}_{i=1}^\infty$ be
a dense countable subset of $[0,T]$. Define inductively
\begin{equation*}
  A_{n,t}=\bigcup_{m\in\Natural}\bigcap_{t_i\leq
    t}\Big\{\xi\in\Real\mid \tau_{n-1}(\xi)\leq t_i, \quad q(t_i,\xi)>\frac1m\Big\}, \quad n=2, 3, \dots. 
\end{equation*}
As before,  the sets $A_{n,t}$ are measurable for all $t$, and, in particular, $A_{n,t'}\subset
A_{n,t}$ for $t\leq t'$.  We consider a dyadic partition of the interval $[0,T]$, and set
\begin{equation*}
  \tau_n^m(\xi)=\sum_{i=0}^{2^m} \frac{iT}{2^m}\chi_{i,n,m}(\xi),
\end{equation*}
where $\chi_{i,n,m}$ is the indicator function of the set $A_{n,2^{-m}iT}\setminus
A_{n,2^{-m}(i+1)T}$. The function $\tau_n^m(\xi)$ is by construction
measurable. One can check that $\tau_n^m(\xi)$ is increasing with respect to $m$ and bounded by
$T$. Hence we define
\begin{equation*}
  \tau_n(\xi)=\lim_{m\to\infty} \tau_n^m(\xi), 
\end{equation*}
and $\tau_n(\xi)$ is a measurable function. Concluding as in the proof of Lemma~\ref{lem:2.3}, one
obtains the following result.

\begin{lemma} \label{lem:breakN}
  If, for every $\xi\in\Real$, $q(t,\xi)$ is positive and continuous with respect to time, then
  \begin{equation}
    \label{eq:deftauj}
    \tau_n(\xi)=
\begin{cases}
\sup\{t\in(\tau_{n-1}(\xi),\infty)\mid q(t',\xi)>0
    \text{ for all } t'\in (\tau_{n-1}(\xi), t) \}, & \text{if $\{\dots\}\neq\emptyset$}, \\
   \infty, & \text{otherwise},
\end{cases}
  \end{equation}
 for $n=2,3,\dots$.
\end{lemma}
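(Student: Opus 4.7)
The plan is to proceed by induction on $n$, with Lemma \ref{lem:2.3} serving as the base case $n=1$, and the inductive step essentially repeating the argument from \cite{HolRay:09} but with $\tau_{n-1}(\xi)$ playing the role that the initial time $0$ plays in that earlier proof. Denote by $\tilde{\tau}_n(\xi)$ the right-hand side of \eqref{eq:deftauj}; the goal is to show $\tau_n(\xi) = \tilde{\tau}_n(\xi)$. The inductive hypothesis that $\tau_{n-1}$ is measurable ensures that the sets $\{\xi \mid \tau_{n-1}(\xi)\leq t_i\}$ are measurable, hence $A_{n,t}$ is measurable for every $t$, and so $\tau_n$ is measurable as an increasing limit of measurable simple functions.

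The key step is the characterization: $\xi \in A_{n,t}$ if and only if $q(t',\xi) > 0$ for every $t' \in [\tau_{n-1}(\xi), t]$. The forward direction uses density of $\{t_i\}$ together with continuity of $q(\cdot,\xi)$. The reverse direction follows from the extreme value theorem applied on the compact interval $[\tau_{n-1}(\xi), t]$: a continuous, strictly positive function attains a positive minimum $1/m$, placing $\xi$ in the $m$-th set of the union defining $A_{n,t}$. With this characterization, $\xi \in A_{n,t}$ becomes equivalent to $t \leq \tilde\tau_n(\xi)$, and therefore $\tau_n^m(\xi) = iT/2^m$ precisely when $iT/2^m \leq \tilde\tau_n(\xi) < (i+1)T/2^m$. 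Letting $m\to\infty$ yields $\tau_n(\xi) = \min(\tilde\tau_n(\xi), T)$, which matches the stated formula once one recalls that the overall construction is being run on arbitrary finite time horizons and glued together, so that $\tilde\tau_n(\xi) > T$ corresponds to $\tau_n(\xi) = \infty$ in the limit $T\to\infty$.

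The principal subtlety is the boundary behavior at $t = \tilde\tau_n(\xi)$: one must verify that $\xi \notin A_{n, \tilde\tau_n(\xi)}$ when $\tilde\tau_n(\xi) < \infty$, i.e., that $q(\tilde\tau_n(\xi), \xi) = 0$. This follows from continuity of $q(\cdot, \xi)$ combined with the definition of $\tilde\tau_n(\xi)$ as a supremum beyond which positivity fails on every enclosing sub-interval, so that $q$ cannot be bounded below by any $1/m > 0$ on $[\tau_{n-1}(\xi), \tilde\tau_n(\xi)+\varepsilon]$ for small $\varepsilon$. A secondary issue, easily addressed, is the measurability of the intermediate objects at each inductive step, which flows from the measurability of $\tau_{n-1}$ guaranteed by the inductive hypothesis.
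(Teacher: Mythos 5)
Your argument is correct and is essentially the paper's own: the paper proves Lemma \ref{lem:breakN} by repeating, with $\tau_{n-1}(\xi)$ in place of the initial time, the density-plus-continuity-plus-compactness identification of the sets $A_{n,t}$ and the dyadic limit used for Lemma \ref{lem:2.3} (which it defers to \cite{HolRay:09}), exactly as you reconstruct, including the measurability bookkeeping by induction. The only cosmetic point is the boundary case: under your characterization the equivalence should read $\xi\in A_{n,t}$ iff $t<\tilde\tau_n(\xi)$ (or $t\le\tilde\tau_n(\xi)$ with $q(t,\xi)>0$), which is consistent with your later observation that $\xi\notin A_{n,\tilde\tau_n(\xi)}$ when $q$ vanishes there, and does not affect the limit of the dyadic approximations.
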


\begin{remark}
  In the case of conservative solutions we actually do not need to define $\tau_j(\xi)$ for
  $\xi\in\Real$ because we do not redefine our system \eqref{eq:sysdiss} after wave breaking.
\end{remark}

So far we have identified $q$ with $y_\xi$. However, $y_\xi$ does not decay fast enough at infinity
to belong to $L^2(\Real)$, but $y_\xi-1=\zeta_\xi$ will be in $L^2(\Real)$, and we therefore
introduce $v=q-1$. In the case of conservative solutions, we know that $Q(\Theta)$ and $P(\Theta)$ are
Lipschitz continuous on bounded sets and that $Q(\Theta)$ and $P(\Theta)$ can be bounded by a constant
depending on the bounded set. A slightly different result is true when describing
$\alpha$-dissipative solutions. Define
\begin{equation}\label{eq:defBMfix}
  B_M=\{ \Theta \mid \norm{X}_{\bar V}+\norm{h}_{L^1}+\norm{\frac{1}{q+h}}_{L^\infty}\leq M \text{, } q\bar h=w^2+r^2 \text{, }\bar h\leq h \text{, and } q, \bar h\geq 0\text{ a.e.}\}.
\end{equation}

In addition,  it should be pointed out that for any $\Theta\in C([0,T],B_M)$ the set of all points which
experience wave breaking within a finite time interval $[0,T]$ is bounded, since
\begin{equation}
\begin{aligned}
  \meas(\{\xi\in\Real\mid q(t,\xi)=0\})&\leq \int_\Real \frac{h}{q+h}(t,\xi)d\xi\\
 & \leq \norm{\frac{1}{q+h}}_{L^\infty_TL^\infty}\norm{h}_{L^\infty_TL^1}\leq C(M),
  \end{aligned}
\end{equation}
for all $t\in[0,T]$, where $C(M)$ denotes some constant only depending on $M$.

\begin{lemma}\label{lem:PQ}
  (i) For all $\Theta\in C([0,T],B_M)$, we have 
  \begin{equation}
    \norm{Q(\Theta)}_{L^\infty_T E}+\norm{P(\Theta)}_{L^\infty_TE}\leq C(M)
  \end{equation}
  for a constant $C(M)$ which only depends on $M$.\\
  (ii) 
  For any $\Theta$ and $\tilde \Theta$ in $C([0,T],B_M)$, we have
  \begin{align}
    \norm{Q(\Theta)-Q(\tilde \Theta)}_{L^1_T E}&+\norm{P(\Theta)-P(\tilde \Theta)}_{L^1_TE} \nn\\  \label{eq:lt1bdqp}
    & \leq C(M) \Big(T\norm{X-\tilde X}_{L^\infty_T \bar V} +\int_0^T \int_\Real\vert \bar h(t,\xi)-\bar{\tilde{h}}(t,\xi)\vert d\xi dt\Big).
  \end{align}
  Here $C(M)$ denotes a constant which only depends on $M$.
\end{lemma}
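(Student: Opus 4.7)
The strategy is to exploit the kernel $e^{-|y(\xi) - y(\eta)|}$ together with the uniform $L^\infty$ control on $\zeta$ built into $B_M$. Since $y(\xi) = \xi + \zeta(\xi)$ and $\norm{\zeta}_{L^\infty} \leq M$, we have the fundamental comparison
\begin{equation*}
e^{-|y(\xi) - y(\eta)|} \leq e^{2M} e^{-|\xi - \eta|},
\end{equation*}
which reduces the operators defining $P$ and $Q$ to convolutions with an integrable kernel, up to a multiplicative constant depending only on $M$.

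For part (i), the $L^\infty$ bounds follow by dominating the kernel and the $\sgn$ factor by $1$ and estimating $\norm{2 U^2 q + \bar h}_{L^1} \leq C(M)$: writing $q = 1 + \zeta_\xi$ gives $\int U^2 q \, d\eta \leq \norm{U}_{L^2}^2 + \norm{U}_{L^\infty} \norm{U}_{L^2} \norm{\zeta_\xi}_{L^2}$, while $\norm{\bar h}_{L^1} \leq \norm{h}_{L^1} \leq M$. For the $L^2$ bound, Young's inequality applied to the dominating kernel $e^{-|\xi - \eta|}$ combined with the displayed comparison yields $\norm{P(\Theta)}_{L^2} \leq C(M) \norm{2 U^2 q + \bar h}_{L^2}$, and a similar splitting using $\bar h \leq h \in E$ gives $\norm{2U^2 q + \bar h}_{L^2} \leq C(M)$. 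The identical treatment applies to $Q(\Theta)$.

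For part (ii), I split
\begin{equation*}
P(\Theta) - P(\tilde \Theta) = \frac14 \int \bigl(e^{-|y(\xi) - y(\eta)|} - e^{-|\tilde y(\xi) - \tilde y(\eta)|}\bigr)(2 U^2 q + \bar h) \, d\eta + \frac14 \int e^{-|\tilde y(\xi) - \tilde y(\eta)|} \Delta F \, d\eta,
\end{equation*}
where $\Delta F = (2 U^2 q + \bar h) - (2 \tilde U^2 \tilde q + \bar{\tilde h})$. The crude Lipschitz bound $|e^{-|a|} - e^{-|b|}| \leq |a - b|$ is insufficient for the first integral because $\zeta - \tilde\zeta$ lies only in $L^\infty$, not in $L^2$; I will use instead the refined estimate
\begin{equation*}
\bigl|e^{-|y(\xi) - y(\eta)|} - e^{-|\tilde y(\xi) - \tilde y(\eta)|}\bigr| \leq C(M) e^{-|\xi - \eta|}\bigl(|\zeta(\xi) - \tilde\zeta(\xi)| + |\zeta(\eta) - \tilde\zeta(\eta)|\bigr),
\end{equation*}
obtained by combining the mean value theorem with the kernel comparison, and treating the case where $y(\xi) - y(\eta)$ and $\tilde y(\xi) - \tilde y(\eta)$ have opposite signs separately using that $|\xi - \eta|$ is then forced to be $O(M)$. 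Young's inequality then controls the first integral in both $L^2$ and $L^\infty$ by $C(M) \norm{\zeta - \tilde \zeta}_{L^\infty}$. The second integral is handled by writing $\Delta F = 2(U - \tilde U)(U + \tilde U) q + 2 \tilde U^2 (\zeta_\xi - \tilde \zeta_\xi) + (\bar h - \bar{\tilde h})$ and estimating the first two pieces via the $B_M$ bounds exactly as in part (i).

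The main subtlety is the contribution of $\bar h - \bar{\tilde h}$, which appears only as an $L^1$ quantity on the right-hand side of the claimed estimate. Writing $K(\xi,\eta) = e^{-|\tilde y(\xi) - \tilde y(\eta)|}$, the kernel comparison gives the Schur bounds $\sup_\xi \int K(\xi,\eta) \, d\eta \leq C(M)$ and $\sup_\eta \int K(\xi,\eta) \, d\xi \leq C(M)$, so $\norm{Kf}_{L^\infty} \leq \norm{f}_{L^1}$ and $\norm{Kf}_{L^1} \leq C(M) \norm{f}_{L^1}$. Using the interpolation $\norm{Kf}_{L^2}^2 \leq \norm{Kf}_{L^\infty} \norm{Kf}_{L^1}$ with $f = \bar h - \bar{\tilde h}$ yields the desired $E$-norm control by $C(M) \norm{\bar h - \bar{\tilde h}}_{L^1}$. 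Integrating in time over $[0,T]$ then produces the two factors $T \norm{X - \tilde X}_{L^\infty_T \bar V}$ and $\int_0^T \int_\Real |\bar h - \bar{\tilde h}| \, d\xi \, dt$ in \eqref{eq:lt1bdqp}, and the argument for $Q$ is identical since $|\sgn(\xi - \eta)| \leq 1$.
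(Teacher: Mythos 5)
Your proposal is correct, and it reaches the same estimates by a slightly different technical route than the paper. The paper handles the non-convolution kernel by splitting the $\eta$-integral into the three regions $\eta\leq\xi-2M$, $|\eta-\xi|\leq 2M$, $\eta\geq\xi+2M$, where the sign of $y(t,\xi)-y(t,\eta)$ is known; on the outer regions it factorizes $e^{-(y(\xi)-y(\eta))}=e^{-\zeta(\xi)}e^{-(\xi-\eta)}e^{\zeta(\eta)}$ to get genuine convolutions (and, in part (ii), isolates the $\zeta-\tilde\zeta$ dependence through the factors $e^{-\zeta(\xi)}-e^{-\tilde\zeta(\xi)}$ and $e^{\zeta}-e^{\tilde\zeta}$), while on the middle region it uses $\big|1-e^{-|\tilde y(\xi)-\tilde y(\eta)|+|y(\xi)-y(\eta)|}\big|\leq C(M)\norm{y-\tilde y}_{L^\infty}$. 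You avoid the region splitting altogether via the global domination $e^{-|y(\xi)-y(\eta)|}\leq e^{2M}e^{-|\xi-\eta|}$ and, for the kernel difference, the mean value theorem bound $|e^{-|a|}-e^{-|b|}|\leq e^{-\min(|a|,|b|)}\,|a-b|$ with $\min(|a|,|b|)\geq|\xi-\eta|-2M$; this is a genuine streamlining, and you correctly identified why the decaying factor $e^{-|\xi-\eta|}$ is indispensable (the crude Lipschitz bound alone would give only an $L^\infty$, not an $L^2$, estimate, since $\zeta-\tilde\zeta$ is merely bounded). Two minor remarks: the separate treatment of the case where $y(\xi)-y(\eta)$ and $\tilde y(\xi)-\tilde y(\eta)$ have opposite signs is unnecessary, because $\big||a|-|b|\big|\leq|a-b|$ already covers it; and your Schur-test/interpolation handling of the $\bar h-\bar{\tilde h}$ term is equivalent to the paper's direct use of Young's inequality with the kernel measured in $L^2\cap L^\infty$ against the $L^1$ norm of $\bar h-\bar{\tilde h}$ — both yield exactly the time-integrated term $\int_0^T\int_\Real|\bar h-\bar{\tilde h}|\,d\xi\,dt$ in \eqref{eq:lt1bdqp}.
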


\begin{proof}
 We will only establish the estimates for $P(\Theta)$ as the ones for $Q(\Theta)$ can be obtained using the same methods with only slight modifications. The main tool for proving the stated estimates will be Young's inequality which we recall here for the sake of completeness.
  For any $f\in L^p(\Real)$ and $g\in L^q(\Real)$ with $1\leq p,q,r\leq \infty$, we have
  \begin{equation}
    \norm{f\star g}_{L^r}\leq \norm{f}_{L^p}\norm{g}_{L^q}, \quad \text{ if } \quad 1+\frac{1}{r}=\frac{1}{p}+\frac{1}{q}.
  \end{equation}

(\textit{i}): By definition we have
\begin{equation}\label{Phelp}
 P(\Theta)(t,\xi)= \frac14\int_\Real e^{-\vert y(t,\xi)-y(t,\eta)\vert} (2U^2q+\bar h)(t,\eta)d\eta.
\end{equation}
So far we do not know if $y(t,\xi)$ is an increasing function or not, thus we will split the integral above into three as follows. By assumption we have that $\norm{y(t,\xi)-\xi}_{L^\infty_T L^\infty}\leq M$, thus
\begin{equation*}
(\xi-\eta)-2M\leq y(t,\xi)-y(t,\eta)=(y(t,\xi)-\xi)+(\xi-\eta)-(y(t,\eta)-\eta)\leq (\xi-\eta)+2M,
\end{equation*}
and, in particular,
\begin{align*}
 y(t,\xi)-y(t,\eta)\geq 0 &\quad \text{if} \quad \eta\leq \xi-2M,\\ 
y(t,\xi)-y(t,\eta)\leq 0& \quad \text{if} \quad \eta\geq \xi+2M.
\end{align*}
Hence we can rewrite \eqref{Phelp} as
\begin{align*}
 P(\Theta)(t,\xi)& =\frac14 \int_{-\infty}^{\xi-2M} e^{-(y(t,\xi)-y(t,\eta))}(2U^2q+\bar h)(t,\eta)d\eta\\
& \quad +\frac14 \int_{\xi-2M}^{\xi+2M} e^{-\vert y(t,\xi)-y(t,\eta)\vert}(2U^2q+\bar h)(t,\eta)d\eta\\
&\quad +\frac14 \int_{\xi+2M}^\infty e^{-(y(t,\eta)-y(t,\xi))} (2U^2q+\bar h)(t,\eta)d\eta\\
& = I_1(t,\xi)+I_2(t,\xi)+I_3(t,\xi).
\end{align*}
Let $f(\xi)= \chi_{\{\xi>2M\}}e^{-\xi}$. Then we have 
\begin{align*}
 \norm{I_1(t,\xi)}_{L^\infty_T E}& =\norm{\frac14 \int_{-\infty}^{\xi-2M} e^{-\zeta(t,\xi)}e^{-(\xi-\eta)}e^{\zeta(t,\eta)}(2U^2q+\bar h)(t,\eta) d\eta}_{L^\infty_T E}\\ 
& \leq\frac14  e^{\norm{\zeta}_{L^\infty_TL^\infty}}\norm{(f \star[e^\zeta(2U^2q+\bar h)])(t,\xi)}_{L^\infty_T E}\\ 
& \leq C(M)( \norm{f}_{L^1}+\norm{f}_{L^2})\norm{e^\zeta(2U^2q+ \bar h)}_{L^\infty_TL^2} \\
& \leq C(M),
\end{align*}
since $\bar h(t,\xi)\leq h(t,\xi)$.
Similarly one can estimate $\norm{I_3(t,\xi)}_{L^\infty_T E}$ by replacing the function $f(\xi)$ by the function $g(\xi)= \chi_{\{\xi<-2M\}}e^{\xi}$.
As far as $I_2(t,\xi)$ is concerned, we conclude as follows
\begin{align*}
& \norm{I_2(t,\xi)}_{L^\infty_T E}\\
 &\qquad \leq \norm{\frac14\int_{\xi-2M}^{\xi+2M} e^{-\vert y(t,\xi)-y(t,\eta)\vert} (2U^2q+\bar h)(t,\eta)d\eta}_{L^\infty_T E}\\ 
&\qquad \leq \norm{\frac14\int_{\xi-2M}^{\xi+2M} (e^{-(y(t,\xi)-y(t,\eta))}+e^{-(y(t,\eta)-y(t,\xi))})(2U^2q+\bar h)(t,\eta)d\eta}_{L^\infty_T E}\\
&\qquad \leq \norm{\frac14\int_{\xi-2M}^{\xi+2M} e^{-(y(t,\xi)-y(t,\eta))}(2U^2q+\bar h)(t,\eta)d\eta}_{L^\infty_T E}\\
&\qquad \quad + \norm{\frac14\int_{\xi-2M}^{\xi+2M} e^{-(y(t,\eta)-y(t,\xi))}(2U^2q+\bar h)(t,\eta)d\eta}_{L^\infty_T E}.
\end{align*}
Following closely the argument we used for $I_1(t,\xi)$, yields
\begin{equation}
 \norm{I_2(t,\xi)}_{L^\infty_T E}\leq C(M).
\end{equation}

(\textit{ii}): As before we split the integral into three parts and investigate each of them separately. 
We start with 
\begin{align*}
B_1(t,\xi)&=\frac14 \int_{-\infty}^{\xi-2M} \big(e^{-(y(t,\xi)-y(t,\eta))}(2U^2q+\bar h)(t,\eta)\\
&\qquad \qquad\qquad\qquad\qquad\qquad-e^{-(\tilde y(t,\xi)-\tilde y(t,\eta))} (2\tilde U^2\tilde q+\bar{\tilde h})(t,\eta)\big)d\eta\\ 
& =\frac14 \Big(e^{-\zeta(t,\xi)}-e^{-\tilde\zeta(t,\xi)}\Big)\int_{-\infty}^{\xi-2M} e^{-(\xi-\eta)}e^{\zeta(t,\eta)}(2U^2q+\bar h)(t,\eta)d\eta\\ 
& \quad +\frac14  e^{-\tilde\zeta(t,\xi)} \int_{-\infty}^{\xi-2M} e^{-(\xi-\eta)}\Big(e^{\zeta(t,\eta)}2U^2q(t,\eta)-e^{\tilde\zeta(t,\eta)}2\tilde U^2\tilde q(t,\eta)\Big)d\eta\\
&\quad +\frac14  e^{-\tilde\zeta(t,\xi)} \int_{-\infty}^{\xi-2M} e^{-(\xi-\eta)}\Big(e^{\zeta(t,\eta)}\bar h(t,\eta)-e^{\tilde\zeta(t,\eta)}\bar{\tilde h}(t,\eta)\Big)d\eta.
\end{align*}
Let $f(\xi)= \chi_{\{\xi>2M\}}e^{-\xi}$, then 
\begin{align*}
 \norm{B_1(t,\xi)}_{L^1_T E}& \leq C(M)T\norm{\zeta-\tilde\zeta}_{L^\infty_T E} + C(M)T(\norm{f}_{L^1}+\norm{f}_{L^2})\norm{X-\tilde X}_{L^\infty_T V}\\
& \quad + C(M)(\norm{f}_{L^\infty}+\norm{f}_{L^2})\\
&\qquad\qquad \times\Big(T\norm{X-\tilde X}_{L^\infty_T V}+\int_0^T  \int_{\Real}\vert \bar h(t,\xi)-\bar{\tilde{h}}(t,\xi)\vert d\xi dt\Big)\\ 
& \leq C(M)\Big(T\norm{X-\tilde X}_{L^\infty_T V}+\int_0^T  \int_{\Real}\vert \bar h(t,\xi)-\bar{\tilde{h}}(t,\xi)\vert d\xi dt\Big).
\end{align*}
$B_3(t,\xi)$, which corresponds to $I_3(t,\xi)$ in (\textit{i}), can be investigated similarly. 
As far as $B_2(t,\xi)$ is concerned, we have
\begin{align*}
B_2(t,\xi)& =\frac14 \int_{\xi-2M}^{\xi+2M} \Big(e^{-\vert y(t,\xi)-y(t,\eta)\vert }(2U^2q+\bar h)(t,\eta)\\
&\qquad\qquad\qquad\qquad\qquad\qquad\qquad -e^{-\vert\tilde y(t,\xi)-\tilde y(t,\eta)\vert} (2\tilde U^2\tilde q+\bar{\tilde h})(t,\eta)\Big)d\eta\\ 
& =\frac14 \int_{\xi-2M}^{\xi+2M} \Big(e^{-\vert y(t,\xi)-y(t,\eta)\vert}-e^{-\vert \tilde y(t,\xi)-\tilde y(t,\eta)\vert}\Big)(2U^2q+\bar h)(t,\eta)d\eta\\ 
& \quad + \frac14 \int_{\xi-2M}^{\xi+2M}e^{-\vert \tilde y(t,\xi)-\tilde y(t,\eta)\vert} (2U^2q+\bar h-2\tilde U^2\tilde q-\bar {\tilde h})(t,\eta)d\eta\\ 
& =\frac14 \int_{\xi-2M}^{\xi+2M}e^{-\vert y(t,\xi)-y(t,\eta)\vert}\\
&\qquad\qquad\qquad\times\Big(1-e^{-\vert \tilde y(t,\xi)-\tilde y(t,\eta)\vert +\vert y(t,\xi)-y(t,\eta)\vert}\Big)(2U^2q+\bar h)(t,\eta)d\eta\\ \nn
& \quad +\int_{\xi-2M}^{\xi+2M}e^{-\vert \tilde y(t,\xi)-\tilde y(t,\eta)\vert}2(U^2q-\tilde U^2\tilde q)(t,\eta)d\eta\\ 
& \quad +\int_{\xi-2M}^{\xi+2M}e^{-\vert\tilde y(t,\xi)-\tilde y(t,\eta)\vert}(\bar h-\bar{\tilde h})(t,\eta)d\eta\\ 
& =B_{21}(t,\xi)+B_{22}(t,\xi)+B_{23}(t,\xi).
\end{align*}
$\norm{B_{22}(t,\xi)}_{L^1_T E}$ and $\norm{B_{23}(t,\xi)}_{L^1_T E}$ can be estimates using Young's inequality, while 
$\norm{B_{21}(t,\xi)}_{L^1_T E}$ requires more careful estimates. Since $\xi-2M\leq \eta\leq \xi+2M$, we have 
\begin{align}
 \vert \vert y(t,\xi)-y(t,\eta)\vert -\vert \tilde y(t,\xi)-\tilde y(t,\eta)\vert \vert&\leq \vert y(t,\xi)-\tilde y(t,\xi)\vert +\vert y(t,\eta)-\tilde y(t,\eta)\vert \\ \nn
& \leq 2\norm{y-\tilde y}_{L^\infty_T L^\infty}
\end{align}
and 
\begin{align}
 \vert \vert y(t,\xi)-y(t,\eta)\vert -\vert \tilde y(t,\xi)-\tilde y(t,\eta)\vert\vert
&\leq \vert y(t,\xi)-y(t,\eta)\vert +\vert \tilde y(t,\xi)-\tilde y(t,\eta)\vert\\ \nn
&\leq 4\norm{y-\id}_{L^\infty_T L^\infty}+2\vert \xi-\eta\vert  
\leq 8M.
\end{align}
Hence 
\begin{align}
 \vert 1- e^{-\vert \tilde y(t,\xi)-\tilde y(t,\eta)\vert+\vert y(t,\xi)-y(t,\eta)\vert}\vert 
& \leq \vert \int_{-\vert \tilde y(t,\xi)-\tilde y(t,\eta)\vert +\vert y(t,\xi)-y(t,\eta)\vert}^0 e^xdx\vert \\ \nn 
& \leq C(M)\norm{y-\tilde y}_{L^\infty _T L^\infty}
\end{align}
and 
\begin{align*}
& \norm{B_{21}(t,\xi)}_{L^1_T E} \\
 &\qquad\leq C(M)\norm{y-\tilde y}_{L^\infty_T L^\infty}\norm{\frac14 \int_{\xi-2M}^{\xi+2M} e^{-\vert y(t,\xi)-y(t,\eta)\vert}(2U^2q+\bar h)(t,\eta)d\eta}_{L^1_T E}\\  
&\qquad \leq C(M)T\norm{y-\tilde y}_{L^\infty_T L^\infty}\Big(\norm{\frac14\int_{\xi-2M}^{\xi+2M}e^{-(y(t,\xi)-y(t,\eta))}(2U^2q+\bar h)(t,\eta)d\eta}_{L^\infty_T E}\\ 
&\qquad \qquad +\norm{\frac14\int_{\xi-2M}^{\xi+2M}e^{-(y(t,\eta)-y(t,\xi))} (2U^2q+\bar h)(t,\eta)d\eta}_{L^\infty_T E}\Big)\\  
&\qquad \leq C(M)T \norm{y-\tilde y}_{L^\infty_T L^\infty}. 
\end{align*}
Thus putting everything together, we have
\begin{equation}
 \norm{B_{2}(t,\xi)}_{L^1_T E}\leq C(M)\Big(T\norm{X-\tilde X}_{L^\infty_T V}+\int_0^T  \int_{\Real}\vert \bar h(t,\xi)-\bar{\tilde{h}}(t,\xi)\vert d\xi dt\Big). 
\end{equation}
\end{proof}

\begin{remark}
  (\textit{i}): In the case of conservative solutions, i.e., $\alpha=0$, we have $h(t,\xi)=\bar h(t,\xi)$ and hence 
  \begin{equation*}
    \int_0^T  \int_{\Real}\vert \bar h(t,\xi)-\bar{\tilde{h}}(t,\xi)\vert d\xi dt\leq TC(M)\norm{X-\tilde X}_{L^\infty_T \bar V}, 
  \end{equation*}
  after using that $h=U_\xi^2+r^2-h\zeta_\xi$ together with the Cauchy--Schwarz inequality.\\
  \noindent
  (\textit{ii}): In the case of dissipative solutions, i.e., $\alpha=1$, we get, since $\bar h(t,\xi)=0$ for $t\geq \tau_1(\xi)$, that 
  \begin{align*}
    \int_0^T  \int_{\Real}& \vert \bar h(t,\xi)-\bar{\tilde{h}}(t,\xi)\vert d\xi dt\\ \nn 
    & \leq C(M)\Big(T\norm{X-\tilde X}_{L^\infty_T \bar V}\\
    &\quad+\int_\Real\big(\int_{\tau_1}^{\tilde\tau_1} \tilde h(t,\xi)\chi_{\{\tilde\tau_1>\tau_1\}}(\xi)dt +\int_{\tilde\tau_1}^{\tau_1} h(t,\xi)\chi_{\{\tau_1>\tilde\tau_1\}}(\xi)dt\big)d\xi\Big).
  \end{align*}
  Here we used the same argument as in (\textit{i}) together with an application of Fubini's
  theorem. In particular, this means that the norm estimates here imply the ones in \cite{GHR5},
  where the dissipative case is studied, and vice versa.
\end{remark}

To show short-time existence of solutions we will use an iteration argument for the following system
of ordinary differential equations.  Denote generically $(\zeta, U,q,w,\bar h,h,r)$ by $\Theta$, $(\zeta, U,q,w,h,r)$ by $X$, and $(q,w,h,r)$
by $Z$, thus $X=(\zeta,U,Z)$. 
Then, we define the mapping
\begin{equation*}
  \mathcal{P}\colon C([0,T], B_M) \to C([0,T], B_M)
\end{equation*}
as follows: Given $\Theta_0\in \mathcal{G}\cap B_{M_0}$ and $\Theta\in C([0,T], B_M)$, we can compute $P(\Theta)$ and $Q(\Theta)$ using \eqref{eq:Plag3}
and \eqref{eq:Qlag3}. Then, we define $\tilde \Theta=\mathcal{P}(\Theta)$ as follows. Given $\xi\in\Real$, we
set $\tilde \Theta(0,\xi)=\Theta_0(\xi)$ and $\tilde \Theta(t,\xi)$ on $[\tilde\tau_n(\xi), \tilde\tau_{n+1}(\xi)]$ as the solution of the
system of ordinary differential equations
\begin{subequations}\label{eq:sysfix2}
  \begin{align}
    \tilde \zeta_t(t,\xi)&=\tilde U(t,\xi), \\ 
    \tilde U_t(t,\xi)&=-Q(\Theta)(t,\xi),  \label{eq:sysfix2A}\\
    \tilde q_t(t,\xi)&=\tilde w(t,\xi),\\
    \tilde w_t(t,\xi)&= \frac12\bar{\tilde{h}}(t,\xi)+(U^2(t,\xi)-P(\Theta)(t,\xi))\tilde q(t,\xi),\\
    \tilde h_t(t,\xi)& = 2(U^2(t,\xi)-P(\Theta)(t,\xi))\tilde w(t,\xi), \\
    \bar{\tilde{h}}_t(t,\xi)& = \tilde h_t(t,\xi), \\
    \tilde r_t(t,\xi)&=0,
  \end{align}
\end{subequations}
which satisfies, at $t=\tilde\tau_n(\xi)$,
\begin{equation}
  \label{eq:defderatjump2}
  \tilde X(\tilde \tau_n(\xi),\xi) = \tilde X(\tilde\tau_n(\xi)-0,\xi)
  \quad  \text{ and }\quad
  \bar{\tilde{h}}(\tilde\tau_n(\xi),\xi) = (1-\alpha)\bar{\tilde{h}}(\tilde\tau_n(\xi)-0,\xi).
\end{equation}
We write
$\bar{\tilde {Z}}_t=F(\Theta)\bar{\tilde {Z}}$, where $\bar{\tilde{Z}}=(\tilde q,\tilde w, \bar{\tilde{h}},
\tilde{r})$ for all times $t$, where no wave breaking occurs, i.e., for $t\in [\tilde\tau_n(\xi),
\tilde\tau_{n+1}(\xi))$. 
So far we have not excluded that the sequence $\tilde\tau_n(\xi)$ might have an accumulation point $\tilde\tau_\infty(\xi)$. Later on we will see that this is not possible, see Lemma~\ref{lem:break}. If the sequence $\tilde\tau_n(\xi)$ were to have an accumulation point  $\tilde\tau_\infty(\xi)$, we define $\tilde \Theta$ as the
solution of
\begin{align*} \tilde y_t(t,\xi)& =\tilde U(t,\xi) , \quad \tilde U_t(t,\xi)=-Q(t,\xi),\\
  \tilde q_{t}(t,\xi)&= \tilde w_{t}(t,\xi)= \bar{\tilde{h}}_t(t,\xi) = \tilde r_t(t,\xi) =0,\\
  \tilde h(t,\xi)&=\tilde h(\tilde\tau_\infty(\xi),\xi),
\end{align*}
for $t\in[\tilde\tau_\infty(\xi),T]$.  

The following set will play a key role
in the context of wave
breaking, since it contains all points which will experience wave breaking in the near future,
\begin{equation}\label{eq:defKgamma}
  \kappa_{1-\gamma}=\{\xi\in\Real\mid \frac{\bar h_0}{q_0+\bar h_0}(\xi)\geq 1-\gamma\text{, } w_0(\xi)\leq 0, \text{ and } r_0(\xi)=0\}, \quad \gamma\in [0,\frac12].
\end{equation}
Note that 
\begin{align*}
  \frac{\bar h_0}{q_0+\bar h_0}(\xi)\geq 1-\gamma & \Longleftrightarrow \gamma\geq 1-\frac{\bar h_0}{q_0+\bar h_0}(\xi)=\frac{q_0}{q_0+\bar h_0}(\xi)\\ \nn 
  & \Longleftrightarrow (1-\gamma)q_0(\xi)\leq \gamma \bar h_0(\xi)\leq \gamma h_0(\xi),
\end{align*}
which implies that $\frac{q_0}{q_0+h_0}(\xi)\leq \gamma$, and hence $\frac{h_0}{q_0+h_0}(\xi)\geq 1-\gamma$.
In particular, we have that 
\begin{equation}
  \meas(\kappa_{1-\gamma})\leq \frac{1}{1-\gamma}\int_\Real \frac{h_0}{h_0+q_0}(\xi)d\xi\leq \frac{1}{1-\gamma}\norm{\frac{1}{q_0+h_0}}_{L^\infty}\norm{h_0}_{L^1},
\end{equation}
and therefore the set $\kappa_{1-\gamma}$ has finite measure if we choose $\gamma\in [0,\frac12]$, and, in particular, $\meas(\kappa_{1-\gamma})\leq C(M)$.

\begin{lemma}\label{lem:G}
  Given $\Theta_0\in \mathcal{G} \cap B_{M_0}$ for some constant $M_0$, given $\Theta\in C([0,T], B_M)$, we denote by $\tilde \Theta=(\tilde \zeta,\tilde U,\tilde v,\tilde w,\bar{\tilde{h}},\tilde h,\tilde r)=\mathcal{P}(\Theta)$ with initial data $\Theta_0$. Let 
\begin{equation*}
\bar M=\norm{Q(\Theta)}_{L^\infty_TL^\infty}+\norm{P(\Theta)}_{L^\infty_TL^\infty}+\norm{U}_{L^\infty_TL^\infty}^2.
\end{equation*}
Then the following statements hold:

  (\textit{i}) For all $t$ and almost all $\xi$ 
  \begin{equation}\label{eq:G1}
    \tilde q(t,\xi)\geq 0,\quad \tilde h(t,\xi)\geq 0, \quad \bar{\tilde{h}}(t,\xi)\geq 0,
  \end{equation}
  and 
  \begin{equation}\label{eq:G2}
    \tilde q\bar{\tilde{h}} =\tilde w^2+\tilde{r}^2.
  \end{equation}
  Thus, $\tilde q(t,\xi)=0$ implies $\tilde w(t,\xi)=0$ and $\tilde{r}(t,\xi)=0$. Recall that $\tilde q=\tilde v+1$.

  (\textit{ii}) We have 
  \begin{equation}\label{eq:G3}
    \norm{\frac{1}{\tilde q+\tilde h}(t,\dott)}_{L^\infty}\leq 2e^{C(\bar M)T}\norm{\frac{1}{q_0+h_0}}_{L^\infty},
  \end{equation}
  and 
  \begin{equation}
    \label{eq:G3b}
    \norm{(\tilde q+\tilde h)(t,\dott)}_{L^\infty}\leq 2e^{C(\bar M)T}\norm{q_0+h_0}_{L^\infty},
  \end{equation}
  for all $t\in[0,T]$ and a constant $C(\bar M)$ which depends only on $\bar M$. In particular, $\tilde q+\tilde h$ remains bounded strictly away from zero.

  (\textit{iii}) There exists a $\gamma\in(0,\frac12)$ depending only on $\bar M$ such that if $\xi\in \kappa_{1-\gamma}$, then $\tilde \Theta(t,\xi)\in \Omega_1$, where  
  $ \Omega_1$ is given in Definition \ref{def:Omega}, for all $t\in [0,\min(\tilde\tau_1(\xi),T)]$, $\frac{\tilde q}{\tilde q+\bar{\tilde{h}}}(t,\xi)$ is a decreasing function with respect to time for $t\in[0,\min(\tilde\tau_1(\xi),T)]$ and $\frac{\tilde w}{\tilde q+\bar{\tilde{h}}}(t,\xi)$ is an increasing function with respect to time for $t\in[0,\min(\tilde\tau_1(\xi),T)]$. Thus we infer that 
  \begin{equation}
    \frac{w_0}{q_0+\bar h_0}(\xi)\leq\frac{\tilde w}{\tilde q+\bar{\tilde{h}}}(t,\xi)\leq 0\quad \text{and} \quad 0\leq \frac{\tilde q}{\tilde q+\bar{\tilde{h}}}(t,\xi)\leq \frac{q_0}{q_0+\bar{h}_0}(\xi),
  \end{equation}
  for $t\in[0,\min(\tilde\tau_1(\xi),T)]$.
  In addition, for $\gamma$ sufficiently small, depending only on $\bar M$ and $T$, we have 
  \begin{equation}\label{eq:G5}
    \kappa_{1-\gamma}\subset \{\xi\in\Real\mid 0\leq \tilde \tau_1(\xi)<T\}.
  \end{equation}

  (\textit{iv})
  Moreover, for any given $\gamma\in(0,\frac12)$, there exists $\hat T>0$ such that 
  \begin{equation}\label{eq:G4}
    \{\xi\in\Real\mid 0<\tilde\tau_1(\xi)<\hat T\}\subset \kappa_{1-\gamma}.
  \end{equation}
\end{lemma}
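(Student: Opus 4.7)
The plan is to treat the four parts in order, each resting on a well-chosen combination of the variables together with the jump relations \eqref{eq:defderatjump2}; the main obstacle is part (iii), whose self-referential control on the ratio $\tilde q/(\tilde q+\bar{\tilde h})$ has to be closed by a bootstrap argument. For part (i), the key object is the conserved quantity $\Psi := \tilde q\bar{\tilde h} - \tilde w^2 - \tilde r^2$: substituting \eqref{eq:sysfix2} shows $\Psi_t \equiv 0$ between collisions, and at each $\tilde\tau_n(\xi)$ the condition $\tilde q(\tilde\tau_n)=0$ combined with the continuity of $\tilde w,\tilde r$ and $\Psi(\tilde\tau_n-0)=0$ forces $\tilde w(\tilde\tau_n)=\tilde r(\tilde\tau_n)=0$, so the reset $\bar{\tilde h}\mapsto(1-\alpha)\bar{\tilde h}$ preserves $\Psi=0$. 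Nonnegativity of $\tilde q$ then follows by a standard barrier argument using ODE uniqueness: at any candidate first zero $t_0$, the identity and the jump relation force all relevant quantities to vanish and the zero solution is then the unique continuation. The identity gives $\bar{\tilde h}=(\tilde w^2+\tilde r^2)/\tilde q\geq 0$ on open intervals, and $\bar{\tilde h}\geq 0$ is preserved at collisions by $1-\alpha\geq 0$; finally $\tilde h-\bar{\tilde h}$ is constant between collisions and jumps up by $\alpha\bar{\tilde h}(\tilde\tau_n-0)\geq 0$, giving $\tilde h\geq\bar{\tilde h}\geq 0$.

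For part (ii), the quantity $\tilde q + \tilde h$ is continuous across collisions (only $\bar{\tilde h}$ jumps) and satisfies $(\tilde q+\tilde h)_t=(1+2(U^2-P))\tilde w$ between them. By part (i) and AM--GM, $\tilde w^2 \leq \tilde q\tilde h \leq \tfrac14(\tilde q+\tilde h)^2$, so $\lvert(\tilde q+\tilde h)_t\rvert\leq C(\bar M)(\tilde q+\tilde h)$, and Gronwall in both directions yields \eqref{eq:G3b} and \eqref{eq:G3}.

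For part (iii), the central computation is
\begin{equation*}
\left(\frac{\tilde q}{\tilde q+\bar{\tilde h}}\right)_t = \frac{\tilde w\bigl[\bar{\tilde h}-2(U^2-P)\tilde q\bigr]}{(\tilde q+\bar{\tilde h})^2}, \qquad
\left(\frac{\tilde w}{\tilde q+\bar{\tilde h}}\right)_t = \frac{(\bar{\tilde h}-\tilde q)\bigl[\tfrac12\bar{\tilde h}-(U^2-P)\tilde q\bigr]}{(\tilde q+\bar{\tilde h})^2},
\end{equation*}
obtained from \eqref{eq:sysfix2} using $\tilde w^2=\tilde q\bar{\tilde h}$ (from part (i) with $\tilde r\equiv 0$ on $\kappa_{1-\gamma}$). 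I would pick $\gamma\in(0,\tfrac12)$ so small, depending only on $\bar M$, that $(1-\gamma)/\gamma>\max(2\bar M,(3+\sqrt 5)/2)$; for $\xi\in\kappa_{1-\gamma}$ this makes $\tilde\Theta(0,\xi)\in\Omega_1$ strictly. Setting $T^*:=\sup\{t\leq\min(\tilde\tau_1(\xi),T):\tilde\Theta(s,\xi)\in\Omega_1\text{ for }s\leq t\}$, on $[0,T^*]$ the inclusion gives $\bar{\tilde h}\geq 2\bar M\tilde q$, bootstrapped via the first formula (its right-hand side is nonpositive, so $\tilde q/(\tilde q+\bar{\tilde h})$ is decreasing and the bound $(1-\gamma)/\gamma\geq 2\bar M$ propagates); both claimed monotonicities now follow from the nonnegativity of the brackets. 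A quadratic analysis of the system $\tilde q-\tilde w=\bar{\tilde h}$, $\tilde w^2=\tilde q\bar{\tilde h}$ shows that the boundary $\lvert\tilde w\rvert+2\tilde q=\tilde q+\bar{\tilde h}$ requires $\bar{\tilde h}/\tilde q=(3\pm\sqrt 5)/2$, excluded by our choice of $\gamma$, so $T^*=\min(\tilde\tau_1,T)$ and the bootstrap closes. For \eqref{eq:G5}, the same control sharpens the second formula to $(\tilde w/(\tilde q+\bar{\tilde h}))_t\geq c(\bar M)>0$ uniformly; since the ratio starts in $[-\sqrt{\gamma/(1-\gamma)},0]$ and must reach $0$ at $\tilde\tau_1$, we conclude $\tilde\tau_1(\xi)\leq c(\bar M)^{-1}\sqrt{\gamma/(1-\gamma)}$, which is $<T$ after shrinking $\gamma$ further in terms of $T$.

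For part (iv), I work backward from the collision using short-time ODE estimates. Parts (i) and (ii) give uniform bounds on the right-hand sides of \eqref{eq:sysfix2}, so $\tilde q,\tilde w,\tilde r$ are Lipschitz in time with constant $L(M)$. Since $\tilde r\equiv r_0$ and $\tilde r(\tilde\tau_1)=0$ (part (i)), $r_0=0$ exactly. Part (ii) also yields $\tilde q+\tilde h\geq c_0>0$, and since $\tilde q(\tilde\tau_1)=0$ this forces $\bar{\tilde h}$ bounded below by a positive constant on $[0,\tilde\tau_1]$ once $\hat T$ is small. Integrating the $\tilde w$-equation and using $\tilde w(\tilde\tau_1)=0$ then gives
\begin{equation*}
w_0 = -\int_0^{\tilde\tau_1}\!\bigl[\tfrac12\bar{\tilde h}+(U^2-P)\tilde q\bigr]\,dt \leq -c_1\tilde\tau_1 + O(\tilde\tau_1^2) < 0
\end{equation*}
for $\hat T$ small, so $w_0\leq 0$. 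The Lipschitz bound $\lvert\tilde w(t)\rvert\leq L(M)(\tilde\tau_1-t)$ combined with $\tilde q_t=\tilde w$ and $\tilde q(\tilde\tau_1)=0$ yields $q_0=O(\tilde\tau_1^2)$, while the lower bound $\lvert w_0\rvert\gtrsim\tilde\tau_1$ just obtained gives, via the identity $w_0^2=q_0\bar h_0$, the ratio $\bar h_0/q_0=(w_0/q_0)^2\gtrsim 1/\tilde\tau_1^2\to\infty$; hence $\bar h_0/(q_0+\bar h_0)\geq 1-\gamma$ for $\hat T$ small enough in terms of $\gamma,M,M_0$, establishing \eqref{eq:G4}.
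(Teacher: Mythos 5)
Parts (i)--(iii) of your proposal are essentially sound. In (i) the conserved quantity $\tilde q\bar{\tilde h}-\tilde w^2-\tilde r^2$ and the induction across the jump times is exactly the paper's argument, but your justification of $\tilde q\geq 0$ is misstated: after a collision the zero solution is \emph{not} the continuation when $\alpha<1$, since $\tilde w_t(\tilde\tau_n)=\tfrac12\bar{\tilde h}(\tilde\tau_n)\geq 0$ pushes $\tilde w$ up and $\tilde q$ increases again; what actually prevents $\tilde q$ from going negative is precisely this sign of $\tilde w_t$ at every zero of $\tilde q$, not uniqueness of a trivial solution. Part (ii) is correct and in fact simpler than the paper's proof (which applies Gronwall to $\abss{\tilde Z}_2^{-2}$): your use of $(\tilde q+\tilde h)_t=(1+2(U^2-P))\tilde w$ together with $\tilde w^2\leq \tilde q\tilde h\leq\tfrac14(\tilde q+\tilde h)^2$ gives both bounds directly, and $\tilde q+\tilde h$ is indeed continuous across the resets. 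Part (iii) is the same bootstrap as in the paper, with your exact formulas for the time derivatives of the two ratios and with exit through the boundary of $\Omega_1$ excluded by the quadratic computation $\bar{\tilde h}/\tilde q=(3\pm\sqrt5)/2$ instead of the paper's contradiction $1\leq\sqrt\gamma+2\gamma$; both the monotonicity claims and \eqref{eq:G5} come out as in the paper.

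Part (iv), however, contains a genuine gap. The step ``$\tilde q+\tilde h\geq c_0>0$ and $\tilde q(\tilde\tau_1)=0$ force $\bar{\tilde h}$ bounded below by a positive constant on $[0,\tilde\tau_1]$'' is false: what you obtain is a lower bound on $\tilde h$, not on $\bar{\tilde h}=\tilde h-l_0$, and the data $\Theta_0\in\G$ only requires $0\leq\bar h_0\leq h_0$, so $\bar h_0$ (and hence $\bar{\tilde h}$ on $[0,\tilde\tau_1)$, up to an $O(\tilde\tau_1^2)$ correction) can be arbitrarily small while $q_0+h_0$ is of order one. Moreover, the subsystem of \eqref{eq:sysfix2} for $(\tilde q,\tilde w,\bar{\tilde h})$ is linear in these three variables, so rescaling $(q_0,w_0,\bar h_0)$ by $\epsilon$ rescales the whole trajectory without changing $\tilde\tau_1$; consequently there is no uniform $c_1$ with $\abss{w_0}\geq c_1\tilde\tau_1$, and both your displayed inequality for $w_0$ and the final estimate $\bar h_0/q_0=(w_0/q_0)^2\gtrsim\tilde\tau_1^{-2}$ break down. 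The conclusion \eqref{eq:G4} involves only the scale-invariant ratio $\bar h_0/(q_0+\bar h_0)$, which signals that absolute lower bounds cannot be the right currency. Your backward-from-collision idea can be repaired by normalizing: run your part (ii) argument on $\tilde q+\bar{\tilde h}$ (using $\abss{\tilde w}\leq\tfrac12(\tilde q+\bar{\tilde h})$) to get $(\tilde q+\bar{\tilde h})(t)\geq e^{-C(\bar M)T}(q_0+\bar h_0)$, hence $\bar{\tilde h}\geq\tfrac12 e^{-C(\bar M)T}(q_0+\bar h_0)$ near $\tilde\tau_1$, and then carry out your Lipschitz estimates for the rescaled variables $(\tilde q,\tilde w,\bar{\tilde h})/(q_0+\bar h_0)$. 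This is in spirit what the paper does, working directly with the ratios $\tilde w/(\tilde q+\bar{\tilde h})$ and $\tilde q/(\tilde q+\bar{\tilde h})$ and bounding from below the time needed, after the ratio $\bar{\tilde h}/(\tilde q+\bar{\tilde h})$ first reaches $1-\gamma$, for $\tilde w/(\tilde q+\bar{\tilde h})$ to climb from $-\sqrt{\gamma(1-\gamma)}$ back to zero.
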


\begin{proof}
  (\textit{i}) Since $\Theta_0\in\G$,  equations
  \eqref{eq:G1} and \eqref{eq:G2} hold for almost every
  $\xi\in\Real$ at $t=0$. We consider such a $\xi$
  and will drop it in the notation. From
  \eqref{eq:sysfix2}, we have, on the one hand,
  \begin{equation*}
    (\tilde q\bar{\tilde{h}})_t=\tilde q_t\bar{\tilde{h}}+\tilde q\bar{\tilde{h}}_t=\tilde w\bar{\tilde{h}}+2(U^2-P(\Theta))\tilde w\tilde q, \quad t\in(\tilde\tau_n, \tilde\tau_{n+1})
  \end{equation*}
  and, on the other hand,
  \begin{equation*}
    (\tilde w^2+\tilde{r}^2)_t=2\tilde w\tilde w_t= \tilde w\bar{\tilde{h}}+2(U^2-P(\Theta))\tilde w\tilde q,\quad t\in(\tilde\tau_n, \tilde\tau_{n+1}).
  \end{equation*}
  Thus, 
  \begin{equation}
    \label{eq:derpropeq0}
    (\tilde q\bar{\tilde{h}}-\tilde w^2-\tilde{r}^2)_t=0    
  \end{equation}
  and since $\tilde q(0)\bar{\tilde {h}}(0)=\tilde w^2(0)+\tilde{r}^2(0)$, we have $\tilde
  q(t)\bar{\tilde{h}}(t)=\tilde w^2(t)+\tilde{r}^2(t)$ for all $t\in[0,\tilde \tau_1)$. We show by
  induction that it holds for $t\in[\tilde \tau_{n-1},\tilde\tau_{n}]$ for each $n\geq 1$, where $\tilde\tau_0=0$. We have $\tilde
  q(\tilde \tau_n-0)=q(\tilde \tau_n)=0$ so that, by \eqref{eq:defderatjump2},
  \begin{equation*}
    0=\tilde q(\tilde\tau_n)\bar{\tilde {h}}(\tilde\tau_n-0)=\tilde w^2(\tilde\tau_n)+\tilde{r}^2(\tilde\tau_n).
  \end{equation*}
  Hence, $\tilde w(\tilde\tau_n)= \tilde{r}(\tilde\tau_n)=0$ and 
  \begin{equation*}
    \tilde q(\tilde\tau_n)\bar{\tilde {h}}(\tilde\tau_n)=0=\tilde w^2(\tilde\tau_n)+\tilde{r}^2(\tilde\tau_n)
  \end{equation*}
  so that \eqref{eq:G2} holds for $t=\tilde\tau_n$. By \eqref{eq:derpropeq0}, we obtain that
  \eqref{eq:G2} holds also on the whole interval $[\tilde\tau_{n},\tilde\tau_{n+1}]$. From the definition of
  $\tilde\tau_1$ we have that $\tilde q(t)>0$ on $[0,\tilde\tau_1)$ and $\tilde
  q(\tilde\tau_1)=\tilde w(\tilde\tau_1)=\tilde{r}(\tilde\tau_1)=0$ and $\bar{\tilde{h}}(\tau_1)\geq
  0$.  Hence $\tilde w(t)$ becomes positive at time $\tilde\tau_1$,  and therefore $\tilde q(t)$ is
  increasing. Since whenever $\tilde q(t)=0$, we have that $\tilde w$ changes sign from negative to
  positive, it follows that $\tilde q(t)\geq0$ for $t\geq0$. From \eqref{eq:G2} it follows that, for
  $t\in[0,\tilde\tau_1)$, $\bar{\tilde{h}}(t)=\frac{\tilde w^2+\tilde{r}^2}{\tilde q}(t)$ and
  therefore $\bar{\tilde{h}}(t)\geq0$. By the continuity of $\bar{\tilde{h}}$ (with respect to time) we
  have $\lim_{t\uparrow\tilde\tau_1}\bar{\tilde{h}}(t)\geq0$ and, using \eqref{eq:defderatjump2} and \eqref{eq:G2} we
  have $\bar{\tilde{h(t)}}\geq0$ for all $t\in [0,\tilde\tau_2)$. The claim now follows  by induction.

  (\textit{ii}) We consider a fixed $\xi$ that we suppress in the notation. We denote by
  $\abs{\tilde Z}_2=(\tilde q^2+\tilde w^2+\tilde h^2+\tilde{r}^2)^{1/2}$ the Euclidean norm of
  $\tilde Z=(\tilde q,\tilde w,\tilde h,\tilde r)$. Since $0\leq \bar{\tilde{h}}\leq \tilde h$, we have
  \begin{equation*}
    \frac{d}{dt}\abss{\tilde Z}_2^{-2}=-2\abss{\tilde Z}_2^{-4}\,  \tilde Z\,  \frac{d\tilde Z}{dt}\leq C(\bar M)\abss{\tilde Z}_2^{-2}
  \end{equation*}
  for a constant $C(\bar M)$ which depends only on $\bar M$. 
Applying Gronwall's lemma, we obtain $\abs{\tilde Z(t)}_2^{-2}\leq e^{C(\bar
    M)T}\abs{Z(0)}_2^{-2}$. Hence,
  \begin{equation}
    \label{eq:mindec}
    \frac{1}{\tilde q^2+\tilde w^2+\tilde h^2+\tilde{r}^2}(t)\leq e^{C(\bar M)T}\frac{1}{q_0^2+w_0^2+h_0^2+ r_0^2}.
  \end{equation}
  Using \eqref{eq:G2}, we have
  \begin{equation*}
    \tilde q^2+\tilde w^2+\tilde h^2+\tilde{r}^2\leq\tilde q^2+\tilde q\tilde h+\tilde h^2.
  \end{equation*}
  Hence, \eqref{eq:mindec} yields
  \begin{equation*}
    \frac1{(\tilde q+\tilde h)^2}(t)\leq\frac{1}{\tilde q^2+\tilde q\tilde h+\tilde h^2}(t)\leq e^{C(\bar M)T}\frac{1}{q_0^2+h_0^2}\leq 2e^{C(\bar M)T}\frac{1}{(q_0+h_0)^2}. 
  \end{equation*}
  The second claim can be shown similarly.

  (\textit{iii}) Let us consider a given $\xi\in \kappa_{1-\gamma}$. We are going to determine an upper bound on $\gamma$ depending only on $\bar M$ such that the conclusions of (\textit{iii}) hold. For $\gamma$ small enough we have $\Theta_0(\xi)\in\Omega_1$ as otherwise $g_2(\Theta_0(\xi))=q_0(\xi)+\bar h_0(\xi)$ and
  \begin{equation*}
    1=\frac{g_2(\Theta_0(\xi))}{q_0(\xi)+\bar h_0(\xi)} <\frac{-w_0(\xi)+2q_0(\xi)}{q_0(\xi)+\bar h_0(\xi)} \leq\sqrt{\gamma}+2\gamma
  \end{equation*}
  would lead to a contradiction.  We claim that there exists a constant $\gamma(\bar M)$ depending
  only on $\bar M$ such that for all $\gamma\leq\gamma (\bar M)$, $\xi\in\Real$, and $t\in [0,T]$,
  \begin{equation}
    \label{eq:gammp1}
    \frac{\tilde q}{\tilde q +\bar{\tilde{h}}}(t,\xi)\leq \gamma\text{ and }\tilde w(t,\xi)=0 \text{ implies }\tilde q(t,\xi)=0, 
  \end{equation}
  and 
  \begin{equation}
    \label{eq:gammp2}
    \frac{\tilde q}{\tilde q +\bar{\tilde{h}}}(t,\xi)\leq \gamma \text{ implies } \left(\frac{\tilde w}{\tilde q+\bar{\tilde{h}}}\right)_t(t,\xi)\geq 0. 
  \end{equation}
  We consider a fixed $\xi\in\Real$ and suppress it in the notation.  If $\tilde w(t)=0$, then
  \eqref{eq:G2} yields $\tilde q(t)\bar{\tilde{h}}(t)=0$. Thus, either $\tilde q(t)=0$ or
  $\bar{\tilde{h}}(t)=0$. Assume that $\tilde q(t)\neq 0$, then $\bar{\tilde{h}}(t)=0$. Hence
  $1-\gamma\leq \frac{\bar{\tilde{h}}(t)}{\tilde q(t)+\bar{\tilde{h}}(t)}=0$, and we are led to a
  contradiction. Hence, $\tilde q(t)=0$, and we have proved \eqref{eq:gammp1}.\\ If $\frac{\tilde
    q}{\tilde q+\bar{\tilde{h}}}(t)\leq\gamma$, we have
  \begin{align}
    \label{eq:lbdwt}
    \Big(\frac{\tilde w}{\tilde q+\bar{\tilde{h}}}\Big)_t 
    &=\frac12+(U^2-P(\Theta)-\frac12)\frac{\tilde q}{\tilde q+\bar{\tilde{h}}} -(2U^2-2P(\Theta)+1)\frac{\tilde w^2}{(\tilde q+\bar{\tilde{h}})^2} \notag \\
    & \geq \frac12 -C(\bar M)\frac{\tilde q}{\tilde q+\bar{\tilde{h}}}-C(\bar M) \frac{\tilde q\bar{\tilde{h}}}{(\tilde q+\bar{\tilde{h}})^2} \notag \\
    & \geq \frac12-C(\bar M)\gamma.
  \end{align}
  Recall that we allow for a redefinition of $C(\bar M)$. By choosing $\gamma(\bar M)\leq(4C(\bar
  M))^{-1}$, we get $\Big(\frac{\tilde w}{\tilde q+\bar{\tilde {h}}}\Big)_t\geq0$, and we have
  proved \eqref{eq:gammp2}. For any $\gamma\leq\gamma(\bar M)$, we consider a given $\xi$ in
  $\kappa_{1-\gamma}$ and again suppress it in the notation.  We define
  \begin{equation*}
    t_0=\sup\{t\in[0,\tilde\tau_1]\mid \frac{\tilde q}{\tilde q+\bar{\tilde{h}}}(t')<2\gamma \text{ and } \tilde w(t')<0 \text{ for all }t'\leq t\}. 
  \end{equation*}
  Let us prove that $t_0=\tilde \tau_1$. Assume the
  opposite, that is, $t_0<\tilde \tau_1$.  Then we
  have either $\frac{\tilde q}{\tilde q+\bar{\tilde{h}}}(t_0)=2\gamma$ or $\tilde
  w(t_0)=0$.  We have $\Big(\frac{\tilde q}{\tilde q+\bar{\tilde{h}}}\Big)_t\leq0$
  on $[0,t_0]$ and $\frac{\tilde q}{\tilde q+\bar{\tilde{h}}}(t)$ is decreasing on
  this interval.  Hence, $\frac{\tilde q}{\tilde q+\bar{\tilde{h}}}(t_0)\leq \frac{\tilde
    q}{\tilde q+\bar{\tilde{h}}}(0)\leq\gamma$, and therefore we must have
  $\tilde w(t_0)=0$. 
  Then  \eqref{eq:gammp1}
  implies $\tilde q(t_0)=0$, and therefore
  $t_0=\tilde\tau_1$, which contradicts our
  assumption. From \eqref{eq:lbdwt} we get, for
  $\gamma$ sufficiently small,
  \begin{equation*} 
    0=\frac{\tilde w}{\tilde q+\bar{\tilde{h}}}(\tilde\tau_1-0)\geq\frac{\tilde w}{\tilde q+\bar{\tilde{h}}}(0)+\frac14\tilde\tau_1,
  \end{equation*}
  and therefore  $\tilde\tau_1\leq 4\sqrt{\gamma}$. By taking $\gamma$ small enough we can
  impose $\tilde\tau_1<T$, which proves
  \eqref{eq:G5}. It is clear from
  \eqref{eq:gammp2} that $\frac{\tilde w}{\tilde q+\bar{\tilde{h}}}$ is increasing. 
  Assume that $\tilde \Theta(t,\xi)$ leaves $\Omega_1$
  for some $t<\min (\tilde\tau_1,T)$.  Then we get
  \begin{equation*}
    1 =\frac{\tilde q(t)+\bar{\tilde{h}}(t)}{\tilde q(t)+\bar{\tilde{h}}(t)}\leq \frac{\abs{\tilde w(t)}+2\tilde q(t)}{\tilde q(t)+\bar{\tilde{h}}(t)}\leq \sqrt{\gamma}+2\gamma
  \end{equation*}
  and, by taking
  $\gamma$ small enough, we are led to a
  contradiction. 

  (\textit{iv}) Without loss of generality we assume $\hat T\leq 1$. From (\textit{iii}) we know that there exists a $\gamma'$ only depending on $\bar M$ such that for $\xi\in\kappa_{1-\gamma'}$, we have that $\frac{\tilde q}{\tilde q+\bar{\tilde{h}}}$ is a decreasing and $\frac{\tilde w}{\tilde q+\bar{\tilde{h}}}$ is an increasing function both with respect to time on $[0,\min(\tilde\tau_1,T)]$. Let $\bar \gamma \leq \min (\gamma,\gamma')$. We consider a fixed $\xi\in\Real$ such that $\tilde \tau_1(\xi)<\hat T$ (which means implicitly $\tilde r(t,\xi)=0$ for all $t$), but $\xi\not\in \kappa_{1-\bar\gamma}$. We will suppress $\xi$ in the notation from now on. Let us introduce 
  \begin{equation}
    t_0=\inf\{t\in[0,\tilde\tau_1)\mid \frac{\bar{\tilde{h}}}{\tilde q+\bar{\tilde{h}}}(\bar t)\geq 1-\bar\gamma \text{ and } \tilde w(\bar t)\leq 0\text{ for all }\bar t\in[t,\tilde\tau_1)\}.
  \end{equation}
  Since $\tilde w_t(\tilde\tau_1)=\frac12 \bar{\tilde{h}}(\tilde\tau_1)\geq 0$ and $\tilde w(\tilde\tau_1)=\tilde q(\tilde\tau_1)=0$, the definition of $t_0$ is well-posed when $\tilde\tau_1>0$, and we have $t_0<\tilde\tau_1$. By assumption $t_0>0$ and $\tilde w(t_0)=0$ or $\frac{\bar{\tilde{h}}}{\tilde q+\bar{\tilde{h}}}(t_0)= 1-\bar\gamma$. We cannot have $\tilde w(t_0)=0$, since it would imply, see \eqref{eq:gammp1}, that $\tilde q(t_0)=0$ and therefore $t_0=\tilde\tau_1$ which is not possible. Thus we must have $\frac{\bar{\tilde{h}}}{\tilde q+\bar{\tilde{h}}}(t_0)=1-\bar\gamma$, and, in particular, $\frac{\tilde q}{\tilde q+\bar{\tilde{h}}}(t_0)=\bar\gamma$. According to the choice of $\bar\gamma$ we have that $\frac{\tilde q}{\tilde q+\bar{\tilde{h}}}(t)\leq \bar\gamma$ for all $t\geq t_0$, and $\frac{\tilde w}{\tilde q+\bar{\tilde{h}}}(t)$ is increasing. Then we have, following the same lines as in \eqref{eq:lbdwt},
  \begin{equation*}\notag
    \Big(\frac{\tilde w}{\tilde q+\bar{\tilde{h}}}\Big)_t \geq \frac12-C(\bar M)\bar\gamma, \nn
  \end{equation*}
  which yields for $0\leq t_0\leq t'\leq \min(\tilde\tau_1,1)$ that
  \begin{align*}
    \frac{\tilde w}{\tilde q+\bar{\tilde{h}}}(t')\geq \frac{\tilde w}{\tilde q+\bar{\tilde{h}}}(t_0)+(t'-t_0)(\frac12-C(\bar M)\bar \gamma).
  \end{align*}
  Since $\frac{\tilde w}{\tilde q+\bar{\tilde{h}}}(t_0)=-\sqrt{\bar\gamma(1-\bar\gamma)}$, we choose $\hat T$ such that $0> -\sqrt{\bar\gamma(1-\bar\gamma)}+\hat T(\frac12-C(\bar M)\bar \gamma)$. Thus $\frac{\tilde w}{\tilde q+\bar{\tilde{h}}}(\hat T)<0$ and therefore all points which experience wave breaking before $\hat T$ are contained in $\kappa_{1-\bar\gamma}$, since any point entering $\kappa_{1-\bar\gamma}$ at a later time cannot reach the origin within the time interval $[0,\hat T]$ according to the last estimate. 
\end{proof}

\begin{lemma}\label{lem:estshort}
  Given $M>0$, there exist $\bar T$ and $\bar M$ such that for all $T\leq \bar T$ and any initial data $\Theta_0\in\mathcal{G}\cap B_M$, $\mathcal{P}$ is a mapping from $C([0,T],B_{\bar M})$ to $C([0,T], B_{\bar M})$.
\end{lemma}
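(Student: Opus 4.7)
The plan is to integrate the system \eqref{eq:sysfix2} component by component, starting from $\Theta_0\in\mathcal{G}\cap B_M$, and verify each of the defining conditions of $B_{\bar M}$ in turn. For any candidate $\Theta\in C([0,T],B_{\bar M})$, Lemma \ref{lem:PQ}(\textit{i}) supplies $\norm{P(\Theta)}_{L^\infty_T E}+\norm{Q(\Theta)}_{L^\infty_T E}\leq C(\bar M)$, and, using $\norm{U}_{L^\infty_T L^\infty}\leq\bar M$, the multiplier $U^2-P(\Theta)$ is bounded in both $L^2$ and $L^\infty$ by a constant depending only on $\bar M$.

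First I would use $\tilde U_t=-Q(\Theta)$ to obtain $\norm{\tilde U}_{L^\infty_T E}\leq M+TC(\bar M)$, then $\tilde\zeta_t=\tilde U$ gives $\norm{\tilde\zeta}_{L^\infty_T L^\infty}\leq M+T\norm{\tilde U}_{L^\infty_T L^\infty}$; the identity $\tilde r=r_0$ provides $\norm{\tilde r}_{L^\infty_T E}\leq M$ for free. For the remaining triple, set $N(t)=\norm{\tilde v(t)}_E+\norm{\tilde w(t)}_E+\norm{\tilde h(t)}_E$. The coupling through $(U^2-P(\Theta))\tilde q$ in the $\tilde w$-equation is handled by the splitting $(U^2-P(\Theta))\tilde q=(U^2-P(\Theta))+(U^2-P(\Theta))\tilde v$, which keeps every term in $E$ since $\tilde q=1+\tilde v$ itself is not in $L^2$; combined with $0\leq\bar{\tilde h}\leq\tilde h$ this produces
\begin{equation*}
N(t)\leq N(0)+\int_0^t\bigl(C(\bar M)+C(\bar M)N(s)\bigr)\,ds,
\end{equation*}
so Gronwall yields $N(t)\leq(3M+C(\bar M)T)e^{C(\bar M)T}$. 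Cauchy--Schwarz applied to the integrated form of $\tilde h_t=2(U^2-P(\Theta))\tilde w$ then gives $\norm{\tilde h}_{L^\infty_T L^1}\leq M+TC(\bar M)$.

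The last two ingredients come directly from Lemma \ref{lem:G}: part (\textit{ii}) gives $\norm{1/(\tilde q+\tilde h)}_{L^\infty_T L^\infty}\leq 2Me^{C(\bar M)T}$, and part (\textit{i}) supplies the intrinsic identity $\tilde q\bar{\tilde h}=\tilde w^2+\tilde r^2$ together with the nonnegativity of $\tilde q,\tilde h,\bar{\tilde h}$. The inequality $\bar{\tilde h}\leq\tilde h$ is preserved across each jump time, since the jump lowers $\bar{\tilde h}$ by the factor $1-\alpha$ while leaving $\tilde h$ continuous. All upper bounds derived above have the form $M+TC(\bar M)$ or $Me^{C(\bar M)T}$, so I would fix $\bar M$ sufficiently large in terms of $M$ and then pick $\bar T=\bar T(M)>0$ small enough that every quantity stays strictly below $\bar M$ on $[0,\bar T]$. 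Continuity of $\tilde X$ in $t$ is inherited from the ODE structure of \eqref{eq:sysfix2}, while the jumps of $\bar{\tilde h}$ do not enter the $\bar V$-norm on $X$.

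The main delicate point I anticipate is closing the Gronwall estimate in the second step despite the multiplier $\tilde q=1+\tilde v$ in the $\tilde w$-equation; the splitting above, together with the fact that $(U^2-P(\Theta))$ itself belongs to $E$, resolves this cleanly, after which the rest is bookkeeping and an invocation of Lemmas \ref{lem:PQ} and \ref{lem:G}.
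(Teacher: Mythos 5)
Your proposal is correct and follows essentially the same route as the paper's proof: bound $P(\Theta),Q(\Theta)$ via Lemma \ref{lem:PQ}(\textit{i}), integrate \eqref{eq:sysfix2} with the splitting $(U^2-P(\Theta))\tilde q=(U^2-P(\Theta))+(U^2-P(\Theta))\tilde v$ and $0\leq\bar{\tilde h}\leq\tilde h$, close with Gronwall, get the $L^1$ bound on $\tilde h$ by Cauchy--Schwarz, and take the bound on $1/(\tilde q+\tilde h)$ and the structural identities from Lemma \ref{lem:G}, finally fixing $\bar M=\bar M(M)$ and then $\bar T$ small. This matches the paper's argument, including the order in which $\bar M$ and $T$ are chosen.
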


\begin{proof}
  To simplify the notation, we will generically denote by $K(M)$ and $C(\bar M)$ increasing functions of $M$ and $\bar M$, respectively. Without loss of generality, we assume $\bar T\leq 1$. 

  Let $\Theta\in C([0,T], B_{\bar M})$ for a value of $\bar M$ that will be determined at the end as a function of $M$. We assume without loss of generality $\bar M\geq M$.  Let $\tilde \Theta=\mathcal{P}(\Theta)$. From Lemma~\ref{lem:PQ}  we have 
  \begin{equation}
    \norm{Q(\Theta)}_{L^\infty_TE}\leq C(\bar M), \quad \norm{P(\Theta)}_{L^\infty_TE}\leq C(\bar M).
  \end{equation}
  Since $\tilde U_t=-Q(\Theta)$, we get 
  \begin{equation}\label{eq:estest3}
    \norm{\tilde U}_{L^\infty_TE}\leq \norm{U_0}_{E}+T\norm{Q(\Theta)}_{L^\infty_TE}\leq M +TC(\bar M).
  \end{equation}
  Similarly, since, $\tilde \zeta_t=\tilde U$, we get
  \begin{equation}\label{eq:estest2}
    \norm{\tilde\zeta}_{L^\infty_TL^\infty}\leq \norm{\zeta_0}_{L^\infty}+T\norm{\tilde U}_{L^\infty_TL^\infty}\leq M+TC(\bar M).
  \end{equation}

  From \eqref{eq:sysfix2}, by the Minkowski inequality for integrals, we get 
  \begin{subequations}\label{eq:estL1}
    \begin{align}
      \norm{\tilde v(t,\dott)}_E&\leq \norm{v_0}_E+\int_0^t \norm{\tilde w(t',\dott)}_Edt',\\
      \norm{\tilde w(t,\dott)}_E&\leq \norm{w_0}_E+T\norm{P(\Theta)-U^2}_{L^\infty_TE}\\ \nn
      & \quad +\int_0^t \Big(\frac12\norm{\tilde h(t',\dott)}_E+\norm{U^2-P(\Theta)}_{L^\infty_TE}\norm{\tilde v(t',\dott)}_E\Big)dt',\nn \\
      \label{eq:estL12}
      \norm{\tilde h(t,\dott)}_E& \leq \norm{h_0}_E+2\int_0^t\norm{U^2-P(\Theta)}_{L^\infty_TE}\norm{\tilde w(t',\dott)}_Edt',\\
      \norm{\tilde{r}(t,\dott)}_E&\leq \norm{r_0}_E.
    \end{align}
  \end{subequations}
Here we used that $0\leq\bar{\tilde{h}}(t,\xi)\leq \tilde h(t,\xi)$ and that $\tilde h(t,\xi)$ is continuous with respect to time. 
These inequalities imply that
  \begin{equation}
    \norm{\tilde Z(t,\dott)}_{\bar W}\leq K(M)+TC(\bar M)+C(\bar M)\int_0^t \norm{\tilde Z(t',\dott)}_E dt',
  \end{equation}
  and, applying Gronwall's inequality,
  \begin{equation}\label{eq:estest4}
    \norm{\tilde Z}_{L^\infty_T\bar W}\leq (K(M)+TC(\bar M))e^{C(\bar M)T}.
  \end{equation}

  Gathering \eqref{eq:estest3}, \eqref{eq:estest2}, and \eqref{eq:estest4}, we get 
  \begin{equation}\label{eq:estest10}
    \norm{\tilde X}_{L^\infty_T\bar V}\leq (K(M)+TC(\bar M))e^{C(\bar M)T}.
  \end{equation}

Moreover, \eqref{eq:sysfix2} implies that 
\begin{equation}
 \norm{\tilde h(t,\dott)}_{L^1}\leq \norm{\tilde h_0}_{L^1}+2T\norm{U^2-P(\Theta)}_{L^\infty_TL^2}\norm{\tilde w}_{L^\infty_TL^2},
\end{equation}
and hence 
\begin{equation}
 \norm{\tilde h}_{L^\infty_T L^1}\leq (K(M)+TC(\bar M))e^{C(\bar M)T}.
\end{equation}

  From \eqref{eq:G3} we get 
  \begin{equation*}
    \norm{\frac{1}{\tilde q+\tilde h}}_{L^\infty_TL^\infty}\leq K(M) e^{C(\bar M)T}.
  \end{equation*}
  Thus we finally obtain 
  \begin{equation}
    \norm{\tilde X}_{L^\infty_T \bar V}+\norm{\tilde h}_{L^\infty_T L^1}+\norm{\frac{1}{\tilde q+\tilde h}}_{L^\infty_TL^\infty}\leq (K(M)+TC(\bar M))e^{C(\bar M)T}
  \end{equation}
  for some constants $K(M)$ and $C(\bar M)$ that only depend on $M$ and $\bar M$, respectively. We
  now set $\bar M=2K(M)$.  Then we can choose $T$ so small that $(K(M)+C(\bar M)T)e^{C(\bar M)T}\leq
  2K(M)=\bar M$, and therefore $\norm{\tilde X}_{L^\infty_T \bar V}+\norm{\tilde h}_{L^\infty_T L^1}+\norm{\frac{1}{\tilde q+\tilde
      h}}_{L^\infty_TL^\infty}\leq \bar M$.
\end{proof}

Given $\Theta_0\in \mathcal{G}\cap B_M$, there exists $\bar M$, which depends only on $M$, such that
$\mathcal{P}$ is a mapping from $C([0,T], B_{\bar M})$ to $C([0,T], B_{\bar M})$ for $T$ small
enough. Therefore we set
\begin{equation}\label{def:mapP}
  \Ima(\mathcal{P})=\{\mathcal{P}(\Theta)\mid \Theta\in C([0,T],B_{\bar M})\}.
\end{equation}

\begin{lemma}\label{lem:break}
 Given $\Theta_0\in \G \cap B_{M}$, given $\Theta\in C([0,T], B_{\bar M})$, we denote by $\tilde\Theta=\mathcal{P}(\Theta)\in C([0,T],B_{\bar M})$ with initial data 
 $\tilde\Theta|_{t=0}=\Theta_0$. 
 
Then there exists a time $\hat T$ depending on $\bar M$ such that any point $\xi$ can experience
  wave breaking at most once within the time interval $[T_0,T_0+\hat T]$ for any $T_0\ge0$. More
  precisely, given $\xi\in\Real$, we have
   \begin{equation}
   \tilde\tau_{j+1}(\xi)-\tilde\tau_j(\xi)>\hat T   \text{ for all } j. 
   \end{equation}
   In addition,
  for $\hat T$ sufficiently small, we get that in this case $\tilde w(t,\xi)\geq 0$ for all
  $t\in[\tilde\tau_j(\xi), \tilde\tau_j(\xi)+\hat T]$.
\end{lemma}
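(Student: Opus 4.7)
Fix $\xi\in\Real$ and suppress it throughout. The goal is a uniform lower bound, depending only on $\bar M$, on the gap $\tilde\tau_{j+1} - \tilde\tau_j$, together with the nonnegativity of $\tilde w$ on $[\tilde\tau_j, \tilde\tau_j+\hat T]$. At $t=\tilde\tau_j$, Lemma~\ref{lem:G}(i) gives $\tilde q(\tilde\tau_j)=\tilde w(\tilde\tau_j)=\tilde r(\tilde\tau_j)=0$; combined with $\tilde r_t=0$, one has $\tilde r \equiv 0$ along the trajectory. Set $\hat h := \bar{\tilde{h}}(\tilde\tau_j) = (1-\alpha)\bar{\tilde{h}}(\tilde\tau_j-0)\geq 0$. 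In the degenerate subcase $\hat h = 0$, the full state $(\tilde q,\tilde w,\bar{\tilde{h}},\tilde r)$ vanishes at $\tilde\tau_j$, so uniqueness for the ODE system \eqref{eq:sysfix2} restarted at $\tilde\tau_j$ forces the trivial solution $\tilde q\equiv \tilde w\equiv \bar{\tilde{h}}\equiv 0$ thereafter; thus $\tilde\tau_{j+1}=\infty$ and both assertions hold vacuously.

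Assume $\hat h > 0$, and let $K = C(\bar M)$ be a uniform bound for $|U^2 - P(\Theta)|$, available from Lemma~\ref{lem:PQ}(i) together with the $L^\infty$-control of $U$. My plan is a bootstrap on $[\tilde\tau_j, \tilde\tau_j+\hat T]$: let $t^*$ be maximal in this interval such that $\tilde w(s)\geq 0$ and $\tilde q(s)\leq \hat h/(8K)$ for every $s\in[\tilde\tau_j, t^*]$. Under these hypotheses, $\tilde q_t = \tilde w \geq 0$ integrates to $\int_{\tilde\tau_j}^{s}\tilde w\,d\sigma = \tilde q(s)$, so the identity $\bar{\tilde{h}}_t = 2(U^2-P)\tilde w$ and $|U^2-P|\leq K$ yield $|\bar{\tilde{h}}(s)-\hat h|\leq 2K\tilde q(s)\leq \hat h/4$, hence $\bar{\tilde{h}}\in[\tfrac34\hat h,\tfrac54\hat h]$. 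Substituting into $\tilde w_t = \tfrac12\bar{\tilde{h}} + (U^2-P)\tilde q$ produces the two-sided bound $\hat h/4 \leq \tilde w_t \leq 3\hat h/4$. The lower bound forces $\tilde w$ strictly increasing from $\tilde w(\tilde\tau_j)=0$, so $\tilde w>0$ on $(\tilde\tau_j, t^*]$; integrating the upper bound twice gives $\tilde q(t)\leq \tfrac38\hat h(t-\tilde\tau_j)^2$, which stays strictly below $\hat h/(8K)$ as soon as $\hat T$ is chosen, depending only on $K$, with $\hat T < 1/\sqrt{3K}$ (for instance $\hat T = 1/(2\sqrt{K})$).

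A standard continuation argument then shows that neither strict inequality in the bootstrap hypothesis can be saturated before $t=\tilde\tau_j+\hat T$, so the estimates hold on the full interval. This establishes $\tilde w\geq 0$ on $[\tilde\tau_j,\tilde\tau_j+\hat T]$ and $\tilde q>0$ on $(\tilde\tau_j, \tilde\tau_j+\hat T]$ by strict monotonicity, whence $\tilde\tau_{j+1}>\tilde\tau_j+\hat T$. The essential point, and the one to which one must pay close attention when juggling the constants, is that $\hat h$ appears with the same power in the forcing $\tfrac12\bar{\tilde{h}}\sim \hat h$ and in the bootstrap threshold $\hat h/(8K)$; the factor $\hat h$ therefore cancels in the decisive inequality, and the resulting time scale depends only on $K$, and hence only on $\bar M$. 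This uniformity in $\hat h$, in particular as $\hat h\to 0^+$, is precisely what prevents the accumulation of wave breaking times.
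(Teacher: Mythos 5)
Your proof is correct, but it follows a genuinely different route from the paper. The paper's argument exploits the algebraic constraint $\tilde q\,\bar{\tilde h}=\tilde w^2+\tilde r^2$ twice: first to show that a second breaking can only happen after $\tilde w$ has changed sign back to negative, which forces the normalized quantity $\frac{\tilde q}{\tilde q+\bar{\tilde h}}$ to travel from $0$ (at $\tilde\tau_j$) all the way to $1$; second to bound $\bigl|\frac{\tilde w}{\tilde q+\bar{\tilde h}}\bigr|\leq\frac{1}{\sqrt2}$, which together with $\|U^2-P(\Theta)\|_{L^\infty_TL^\infty}\leq C(\bar M)$ gives a bound on $\bigl(\frac{\tilde q}{\tilde q+\bar{\tilde h}}\bigr)_t$ and hence the lower bound $\hat T=\sqrt2\,(1+C(\bar M))^{-1}$ on the transit time; uniformity in the size of $\bar{\tilde h}$ is automatic because the ratio is scale invariant. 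You instead run a direct bootstrap from the breaking time on the unnormalized variables, never invoking \eqref{eq:G2} after the initial observation $\tilde q=\tilde w=\tilde r=0$: the forcing $\frac12\bar{\tilde h}\sim\hat h$ keeps $\tilde w_t$ pinned between $\hat h/4$ and $3\hat h/4$ as long as $\tilde q\leq\hat h/(8K)$, so $\tilde w>0$ and $\tilde q\lesssim\hat h\,(t-\tilde\tau_j)^2$, and the scale $\hat h$ cancels in the exit-time computation, giving $\hat T$ depending only on $K=C(\bar M)$ — your key uniformity observation, which plays the role of the paper's scale invariance. Your treatment also absorbs the cases the paper dismisses separately ($\alpha=1$, or $\bar{\tilde h}(\tilde\tau_j-0)=0$) via the linear-homogeneous structure of the subsystem for $(\tilde q,\tilde w,\bar{\tilde h},\tilde r)$, which forces the trivial continuation and $\tilde\tau_{j+1}=\infty$. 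Two small points you should make explicit: in the continuation step, note that no reset time $\tilde\tau_{j+1}$ can intrude into the bootstrap interval precisely because the bootstrap yields $\tilde q>0$ strictly inside it, so the un-reset ODE \eqref{eq:sysfix2} is indeed the system you are integrating there; and the trivial-solution claim in the degenerate case should cite uniqueness (Gronwall) for the linear system $\bar{\tilde Z}_t=F(\Theta)\bar{\tilde Z}$ with bounded coefficients determined by the given $\Theta$. With these remarks your argument is complete and, if anything, more elementary and self-contained than the paper's, at the cost of being slightly longer than the ratio argument once Lemma \ref{lem:G} is available.
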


\begin{proof}
If no wave breaking occurs within $[0,T]$ or $\alpha=1$, there is nothing to prove. Therefore let us assume that $\alpha\in[0,1)$ and for some fixed $\xi\in\Real$ wave breaking occurs. Moreover, let us assume the worst possible case, namely $T_0=\tilde\tau_1(\xi)$, since all other cases follow from this one.
At time $\tilde\tau_1(\xi)$ we have $\tilde q(\tilde\tau_1(\xi),\xi)=\tilde
  w(\tilde\tau_1(\xi),\xi)=0$, and, in particular, $\frac{\bar{\tilde{h}}}{\tilde
    q+\bar{\tilde{h}}}(\tilde\tau_1(\xi),\xi)=1$ and $\tilde r(t,\xi)=0$ for all $t$. Moreover, wave
  breaking can only take place if $\tilde w(t,\xi)\leq 0$ for $\tilde\tau_1(\xi)-\varepsilon\leq
  t\leq \tilde\tau_1(\xi)$, but right after wave breaking $\tilde w(t,\xi)$ is positive, in the case
  where $\alpha<1$. Thus before wave breaking can occur once more at $\xi\in\Real$, $\tilde w(t,\xi)$ has to change sign from positive to negative at some time $t^*>\tilde\tau_1(\xi)$. Hence we will now establish a lower bound on $t^*-\tilde\tau_1(\xi)$, which defines $\hat T$.

   If $\tilde w(t,\xi)$ changes sign for the first time at time $t^*$ for $t>\tilde \tau_1(\xi)$, then \eqref{eq:G2} implies that either
  $\tilde q(t^*,\xi)=0$ (i.e., wave breaking occurs) or $\frac{\bar{\tilde{h}}}{\tilde
    q+\bar{\tilde{h}}}(t^*,\xi)=0$ (i.e., no wave breaking). The first alternative is not possible
  as $\tilde q_t(t,\xi)=\tilde w(t,\xi)>0$ for $t\in(\tau_1(\xi), t^*)$, in the case where $\alpha<1$. Hence,
  $\frac{\tilde q(t^*,\xi)}{\tilde q(t^*,\xi)+\bar{\tilde h}(t^*,\xi)}=1$. Thus if we can establish a lower bound on how long it takes for the function $\frac{\tilde q(t,\xi)}{\tilde q(t,\xi)+\bar{\tilde{h}}(t,\xi)}$, which equals $0$ at time $\tilde \tau_1(\xi)$, to reach $1$ after wave breaking the claim follows.

Observe first that \eqref{eq:G2} implies that
  \begin{equation}
    \Big\vert \frac{\tilde w}{\tilde q+\bar{\tilde{h}}}(t,\xi)\Big\vert \leq\frac{1}{\sqrt{2}}, \quad \text{and}\quad \Big\vert \frac{\tilde{r}}{\tilde q+\bar{\tilde{h}}}(t,\xi)\Big\vert\leq \frac{1}{\sqrt{2}} 
  \end{equation}
  for all $t\in[0,\infty)$ and $\xi\in\Real$. Moreover, according to Lemma~\ref{lem:PQ} (\textit{i}) we have 
\begin{equation}\label{constbreak}
 \norm{P(\Theta)}_{L^\infty_TL^\infty}+\norm{U}^2_{L^\infty_TL^\infty}\leq C(\bar M).
\end{equation}
From 
  \eqref{eq:sysfix2} we get
  \begin{align*}
    \Big(\frac{\tilde q}{\tilde q+\bar{\tilde{h}}}\Big)_t&=\frac{\tilde w}{\tilde q+\bar{\tilde{h}}}\Big(1-2\frac{\tilde q}{\tilde q
      +\bar{\tilde{h}}}(U^2-P(\Theta)+\frac12)\Big)\\ 
    &\leq \frac{1}{\sqrt{2}}\big(1+C(\bar M)\big),
  \end{align*}
  for $t\in[\tilde\tau_1(\xi), \tilde\tau_2(\xi))$.
Hence, integrating the latter equation in time from
  $\tau_1$ to $t^*$ yields $1=\frac{\tilde q}{\tilde q+\bar{\tilde{h}}}(t^*)\leq \frac{1}{\sqrt{2}}
  (t^* - \tau_1(\xi))\big(1+C(\bar M)\big)$. Choosing $\hat T=\sqrt{2}(1+C(\bar M))^{-1}$ concludes the proof.

\end{proof}

We define the \textit{discontinuity residual} as
\begin{equation*}
  \Gamma(\Theta,\tilde \Theta)=\int_0^T\int_\Real \vert \bar h(t,\xi)-\bar{\tilde{h}}(t,\xi)\vert d\xi dt.
\end{equation*}

According to Lemma~\ref{lem:PQ} (\textit{ii}), we have
\begin{multline}\label{eq:estPQGamma}
  \norm{Q(\Theta)-Q(\tilde \Theta)}_{L^1_T
    E}+\norm{P(\Theta)-P(\tilde \Theta)}_{L^1_TE}\\\leq C(\bar M) \Big(T\norm{X-\tilde
    X}_{L^\infty_T \bar V}+\Gamma(\Theta,\tilde
  \Theta)\Big).
\end{multline}

In the next lemma we establish some estimates for
$\Gamma(\Theta,\tilde \Theta)$,
$\Gamma(\mathcal{P}(\Theta),\mathcal{P}(\tilde \Theta))$ and
a quasi-contraction property for $\mathcal{P}$.

\begin{lemma} \label{lem:contrgamma} Given $\Theta$, $\tilde \Theta\in \Ima(\mathcal{P})$,
  $\gamma\in(0,\frac12)$, let $\Theta_2=\mathcal{P}(\Theta)$ and $\tilde \Theta_2=\mathcal{P}(\tilde \Theta)$, then there exists $T>0$ depending on $\bar M$ 
  such that the following inequalities hold\\
   (\textit{i})  
  \begin{equation}
    \label{eq:contrh}
\norm{h_2-\tilde h_2}_{L^\infty_TL^1}\leq C(\bar M)T\Big(\norm{X_2-\tilde X_2}_{L^\infty_T\bar V}+\norm{X-\tilde X}_{L^\infty_T \bar V}+\Gamma(\Theta,\tilde\Theta)\Big),
  \end{equation}
 (\textit{ii})
  \begin{equation}
    \label{eq:contGamma}
    \Gamma(\Theta,\tilde \Theta)\leq C(\bar M)\norm{X-\tilde X}_{L^\infty_T\bar V},
  \end{equation}
  (\textit{iii}) 
  \begin{equation}
    \label{eq:contracGamma}
    \Gamma(\Theta_2,\tilde \Theta_2)
    \leq C(\bar M)\Big(T\big(\norm{X_2-\tilde X_2}_{L^\infty_T\bar V}+\norm{X-\tilde X}_{L^\infty_T\bar V}\big) + \gamma\Gamma(\Theta,\tilde \Theta)\Big),
  \end{equation}
  (\textit{iv}) 
  \begin{align}
    \label{eq:contracP}
    \norm{X_2-\tilde X_2}_{L^\infty_T\bar V}&\leq
    C(\bar M)\Big(T\norm{X-\tilde X}_{L^\infty_T\bar V}+\Gamma(\Theta,\tilde \Theta)\Big),
  \end{align}
  where $C(\bar M)$ denotes some constant which only depends on $\bar M$.
\end{lemma}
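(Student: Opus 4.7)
The plan is to establish the four bounds in the order \textit{(iv)}, \textit{(i)}, \textit{(ii)}, \textit{(iii)}, combining the ODE structure of $\mathcal{P}$ with the localization of wave breaking on the set $\kappa_{1-\gamma}$ from \eqref{eq:defKgamma}. First I would shrink $T\leq \hat T$ so that Lemma~\ref{lem:break} and Lemma~\ref{lem:G}(\textit{iv}) apply to each of $\Theta,\tilde\Theta,\Theta_2,\tilde\Theta_2$: any particle experiencing wave breaking in $[0,T]$ lies in $\kappa_{1-\gamma}$, each such particle breaks at most once, and $\meas(\kappa_{1-\gamma})\leq C(\bar M)$.

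For (\textit{iv}), the idea is to subtract the ODE systems \eqref{eq:sysfix2} written for $\Theta_2$ and $\tilde\Theta_2$, take $E$-norms componentwise, apply Minkowski's inequality for integrals, and close by Gronwall. The forcing terms are $P(\Theta)-P(\tilde\Theta)$ and $Q(\Theta)-Q(\tilde\Theta)$, bounded via Lemma~\ref{lem:PQ}(\textit{ii}) by $C(\bar M)\bigl(T\|X-\tilde X\|_{L^\infty_T\bar V}+\Gamma(\Theta,\tilde\Theta)\bigr)$; $U^2-\tilde U^2$, bounded by $C(\bar M)\|U-\tilde U\|_{L^\infty_T E}$ using the uniform bounds on $U,\tilde U$; and the $\bar h-\bar{\tilde h}$ term in the $w_2$ equation, integrating directly to $\Gamma(\Theta,\tilde\Theta)$. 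Crucially, $X_2$ records the continuous variable $h_2$ rather than $\bar h_2$, so the jumps of $\bar h_2$ and $\bar{\tilde h}_2$ do not enter this estimate. Estimate (\textit{i}) is then the same argument specialized to the jump-free equation $h_{2,t}=2(U^2-P(\Theta))w_2$, executed in $L^1$ via Cauchy--Schwarz on products of $L^2$ functions, picking up the $\|X_2-\tilde X_2\|_{L^\infty_T\bar V}$ contribution from the $w_2-\tilde w_2$ factor.

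For (\textit{ii}) I would split $\Real=\kappa_{1-\gamma}\cup\kappa_{1-\gamma}^c$. On $\kappa_{1-\gamma}^c$, Lemma~\ref{lem:G}(\textit{iv}) rules out wave breaking in $[0,T]$ for either of $\Theta$ or $\tilde\Theta$, so $h-\bar h$ and $\tilde h-\bar{\tilde h}$ remain at their initial values, which coincide since $\Theta_0$ is common, giving $|\bar h-\bar{\tilde h}|=|h-\tilde h|$, a component of $\|X-\tilde X\|_{L^\infty_T\bar V}$. On $\kappa_{1-\gamma}$, the finite measure together with the identity $q\bar h=w^2+r^2$ and the uniform bound $\|1/(q+h)\|_{L^\infty}\leq C(\bar M)$ allows one to express $\bar h$-differences pointwise through differences of $w,r,q,h$ and integrate to $\|X-\tilde X\|_{L^\infty_T\bar V}$. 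For (\textit{iii}) I would decompose the $(t,\xi)$-integral in $\Gamma(\Theta_2,\tilde\Theta_2)$ according to whether $\Theta_2$ and $\tilde\Theta_2$ have each already broken at $(t,\xi)$: on the ``neither has broken'' part, $|\bar h_2-\bar{\tilde h}_2|=|h_2-\tilde h_2|$ is controlled by (\textit{i}); on the ``mismatched'' part, the length $|\tau^{(2)}_j(\xi)-\tilde\tau^{(2)}_j(\xi)|$ is controlled by $\|X_2-\tilde X_2\|_{L^\infty_T\bar V}$ via (\textit{iv}), producing $T$-terms; on the ``both have broken'' part, the jump amounts $\alpha\bar h_2(\tau^{(2)}_j-0,\xi)$ and $\alpha\bar{\tilde h}_2(\tilde\tau^{(2)}_j-0,\xi)$ each carry a factor $\leq\gamma$ of the preceding accumulated energy, since on $\kappa_{1-\gamma}$ one has $q/(q+\bar h)\leq\gamma$ by Lemma~\ref{lem:G}(\textit{iii}), and their difference contributes the $\gamma\,\Gamma(\Theta,\tilde\Theta)$ term together with a remainder that absorbs into the $T$-terms via (\textit{iv}).

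The hard part will be (\textit{iii}): tracking the distinct wave-breaking times $\tau^{(2)}_j$ and $\tilde\tau^{(2)}_j$ of $\Theta_2$ and $\tilde\Theta_2$, which depend on $q_2$ and $\tilde q_2$ themselves, and extracting the contraction factor $\gamma$ in front of $\Gamma(\Theta,\tilde\Theta)$ rather than a mere constant. This will require a delicate use of Lemma~\ref{lem:G}(\textit{iii}) on $\kappa_{1-\gamma}$, to argue that on this set the jumps in $\bar h$ affect only the small fraction $\gamma$ of the total energy, so that the iterated discontinuity residual is contracted, which is ultimately what makes $\mathcal{P}$ a contraction in the combined norm $\|X-\tilde X\|_{L^\infty_T\bar V}+\Gamma(\Theta,\tilde\Theta)$.
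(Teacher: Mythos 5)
Your outline reproduces the easy parts of the paper's argument (localization of breaking to $\kappa_{1-\gamma}$, its finite measure, Lemma~\ref{lem:PQ}~(\textit{ii}), Gronwall on the linear structure; your part (\textit{i}) is essentially the paper's), but two of the central steps do not work as described. First, your proof of (\textit{iv}) does not close. In \eqref{eq:sysfix2} the source term in the equation for $w_2$ is $\tfrac12\bar h_2$, the energy variable of the \emph{output} $\Theta_2=\mathcal{P}(\Theta)$, not of the input; subtracting the two systems therefore produces the forcing $\tfrac12(\bar h_2-\bar{\tilde h}_2)$, whose space--time integral is $\Gamma(\Theta_2,\tilde\Theta_2)$, not $\Gamma(\Theta,\tilde\Theta)$. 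So, contrary to your claim that "the jumps of $\bar h_2$ and $\bar{\tilde h}_2$ do not enter", they do: the outputs jump at different times $\tau_2(\xi)\neq\tilde\tau_2(\xi)$, and this discrepancy drives $w_2-\tilde w_2$ and hence $\norm{X_2-\tilde X_2}_{L^\infty_T\bar V}$. Since in your ordering (\textit{iii}) is proved \emph{after} (\textit{iv}), you cannot absorb the $\Gamma(\Theta_2,\tilde\Theta_2)$ term, and the argument is circular. The naive subtraction-plus-Gronwall is valid only on $\kappa_{1-\gamma}^c$, where no breaking occurs and $\bar Z_2=(q_2,w_2,\bar h_2,r_2)$ satisfies the linear equation $\bar Z_{2,t}=F(\Theta)\bar Z_2$; on $\kappa_{1-\gamma}$ the paper must split $[0,\tau_2]$, $[\tau_2,\tilde\tau_2)$, $[\tilde\tau_2,T]$ and control the extra terms $\int_{\tau_2}^{t}\bar{\tilde h}_2\,dt'$ and $\vert l_1-\tilde l_1\vert$ via \eqref{eq:3.70b}, \eqref{eq:3.79}, \eqref{eq:estttau9} and \eqref{eq:estttau8}, which is precisely why (\textit{iv}) is proved last there.

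Second, your mechanism for (\textit{ii}) and for the factor $\gamma$ in (\textit{iii}) is backwards. On $\kappa_{1-\gamma}$ the defining condition $\bar h_0/(q_0+\bar h_0)\geq 1-\gamma$ means the energy fraction is close to one, so the jump $l_1(\xi)=\alpha\bar h_2(\tau_2(\xi)-0,\xi)$ is of order one, not a $\gamma$-fraction of the energy; moreover $q\bar h=w^2+r^2$ cannot be solved pointwise for $\bar h$ near breaking, since $q\to 0$ while $\bar h$ stays bounded away from zero (and $\norm{1/(q+h)}_{L^\infty}$ gives no control of $1/q$), and no pointwise identity sees the dominant contribution on the mismatched interval $[\tau_2,\tilde\tau_2)$, where one solution has jumped and the other has not. (The algebraic identity is in fact what is needed on $\kappa_{1-\gamma}^c$, which has infinite measure, so that the $L^1$ bound cannot come from Cauchy--Schwarz with the measure of the set; cf.~\eqref{eq:gammanorm}.) The paper's key device, absent from your sketch, is to integrate the $\tilde w_2$-equation over $[\tau_2,\tilde\tau_2]$, converting $\int_{\tau_2}^{\tilde\tau_2}\bar{\tilde h}_2\,dt$ into $-2\tilde w_2(\tau_2(\xi),\xi)$ up to harmless terms, so the mismatch is controlled by the pointwise difference $w_2(\tau_2(\xi),\xi)-\tilde w_2(\tau_2(\xi),\xi)$ rather than by the length $\vert\tau_2(\xi)-\tilde\tau_2(\xi)\vert$, which is never estimated (and which (\textit{iv}) does not provide). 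The contraction factor in (\textit{iii}) then comes from re-estimating $w_2(\tau_2)-\tilde w_2(\tau_2)$ through the $w$-equation on $[0,\tau_2]$, where $q_2,\tilde q_2\leq C(\bar M)\gamma$; this small factor multiplies $\norm{(U^2-P(\Theta))-(\tilde U^2-P(\tilde\Theta))}_{L^1_TE}$, which by Lemma~\ref{lem:PQ}~(\textit{ii}) carries $\Gamma(\Theta,\tilde\Theta)$ --- that, and not any smallness of the jumps themselves, is the origin of the term $\gamma\,\Gamma(\Theta,\tilde\Theta)$.
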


\begin{proof}
  Denote by $\Theta_2=\mathcal{P}(\Theta)$ and $\tilde \Theta_2=\mathcal{P}(\tilde \Theta)$ and, abusing the notation, let $\tau_2(\xi)$ and
  $\tilde\tau_2(\xi)$ 
 be the first time when wave
  breaking occurs at the point $\xi\in\Real$ for $\Theta_2$ and $\tilde \Theta_2$, respectively. Given
  $\gamma>0$ we know from Lemma~\ref{lem:G} (\textit{iv}) and Lemma~\ref{lem:break} that there exists $T$
  small enough such that $\{\xi\in\Real\mid 0<\tau_2(\xi)<T \text{ or }0<\tilde\tau_2(\xi)<T\}\subset
  \kappa_{1-\gamma}$ and such that every point experiences wave breaking at most once within the time
  interval $[0,T]$. We consider such $T$. Without loss of generality we can assume that $T\leq 1$
  and $\gamma\leq\gamma(\bar M)$.

(\textit{i}): From \eqref{eq:sysfix2} we get  
\begin{align}
 & \norm{h_2 -\tilde h_2}_{L^\infty_T L^1}\\ \nn
& \quad  \leq\int_0^T \norm{2(U^2-P(\Theta))(w_2-\tilde w_2)(s,\dott)}_{L^1}ds\\ \nn
& \qquad +\int_0^T \norm{2(U^2-P(\Theta)-\tilde U^2+P(\tilde\Theta))\tilde w_2(s,\dott)}_{L^1}ds\\ \nn 
& \quad \leq \int_{0}^T \norm{2(U^2-P(\Theta))(s,\dott)}_{L^2}\norm{(w_2-\tilde w_2)(s,\dott)}_{L^2} ds\\ \nn 
& \qquad +\int_0^T 2\Big(\norm{(U^2-\tilde U^2)(s,\dott)}_{L^2}+\norm{(P(\Theta)-P(\tilde\Theta))(s,\dott)}_{L^2}\Big)\norm{\tilde w_2(s,\dott)}_{L^2}ds\\ \nn 
& \quad \leq C(\bar M)T\Big(\norm{X_2-\tilde X_2}_{L^\infty_T \bar V}+\norm{X-\tilde X}_{L^\infty_T\bar V}+\norm{P(\Theta)-P(\tilde \Theta)}_{L^\infty_T E}\Big)\\ \nn 
& \quad \leq C(\bar M)T\Big(\norm{X_2-\tilde X_2}_{L^\infty_T \bar V}+\norm{X-\tilde X}_{L^\infty_T\bar V}+\Gamma(\Theta,\tilde\Theta)\Big).
\end{align}

As far as the other estimates are concerned, observe first that for $\xi\in \kappa_{1-\gamma}^c$ no wave breaking occurs, and therefore $\vert \bar h_2(t,\xi)-\bar{\tilde{h}}_2(t,\xi)\vert =\vert h_2(t,\xi)-\tilde h_2(t,\xi)\vert$, since $\Theta_2(0,\xi)=\tilde \Theta_2(0,\xi)$. Moreover, using \eqref{eq:G2}, we get $\bar h_2(t,\xi)=w_2^2(t,\xi)+r_2^2(t,\xi)-\bar h_2(t,\xi) v_2(t,\xi)$ and a similar relation holds for $\bar{\tilde{h}}_2(t,\xi)$. Hence 
  \begin{equation}\label{eq:gammanorm}
    \begin{aligned}
      \int_0^T \int_{\kappa_{1-\gamma}^c}&\vert \bar h_2(t,\xi)-\bar{\tilde{h}}_2(t,\xi)\vert d\xi dt \\ 
      & \quad\leq \int_0^T \int_{\kappa_{1-\gamma}^c} \vert (w_2+\tilde w_2)(w_2-\tilde w_2)\vert (t,\xi)d\xi dt\\  
      & \qquad +\int_0^T \int_{\kappa_{1-\gamma}^c}\vert (r_2+\tilde{r}_2)(r_2-\tilde{r}_2)\vert (t,\xi)d\xi dt\\ 
      & \qquad +\int_0^T \int_{\kappa_{1-\gamma}^c} \big(\vert (\bar h_2-\bar{\tilde{h}}_2)v_2\vert (t,\xi)+\bar{\tilde{h}}_2\vert v_2-\tilde v_2\vert (t,\xi) \big)d\xi dt\\ 
      &\quad \leq C(\bar M) T \norm{X_2-\tilde X_2}_{L^\infty_T \bar V},
    \end{aligned}
  \end{equation}
  where we used the Cauchy--Schwarz inequality in the last step. 
  Thus we have 
  \begin{align}\label{eq:3.65}
    \int_0^T \int_{\Real} \vert \bar h_2(t,\xi)-\bar{\tilde{h}}_2(t,\xi)\vert d\xi dt& \leq C(\bar M)T \norm{X_2-\tilde X_2}_{L^\infty_T\bar V}\\ \nn 
    & \quad +\int_0^T \int_{\kappa_{1-\gamma}}\vert \bar h_2(t,\xi)-\bar{\tilde{h}}_2(t,\xi)\vert d\xi dt.
  \end{align}

  (\textit{ii}): Let us consider $\xi\in \kappa_{1-\gamma}$ such that $\tau_2(\xi)\not = \tilde\tau_2(\xi)$. Without loss of generality we assume $0<\tau_2(\xi)<\tilde\tau_2(\xi)\leq T$. 
 Since $\Theta_2(t,\xi)$ and $\tilde \Theta_2(t,\xi)$ both belong to $\Ima(\mathcal{P})$, we have that $\vert h_2(t,\xi)-\tilde h_2(t,\xi)\vert =\vert \bar h_2(t,\xi)-\bar{\tilde{h}}_2(t,\xi)\vert$ for $t\in[0,\tau_2(\xi))$, and especially
  \begin{equation}\label{eq:estttau1}
    \int_0^{\tau_2} \vert \bar h_2(t,\xi)-\bar{\tilde{h}}_2(t,\xi)\vert dt =\int_0^{\tau_2} \vert h_2(t,\xi)-\tilde h_2(t,\xi)\vert dt.
  \end{equation}

  For $t\in[\tau_2(\xi),\tilde\tau_2(\xi))$, we have $\bar h_2(t,\xi)=h_2(t,\xi)-l_0(\xi)-l_1(\xi)$ and $\bar{\tilde{h}}_2(t,\xi)=\tilde h_2(t,\xi)-l_0(\xi)$. Hence it follows that 
  \begin{equation}
    \vert \bar h_2(t,\xi)-\bar{\tilde{h}}_2(t,\xi)\vert \leq \vert h_2(t,\xi)-\tilde h_2(t,\xi)\vert +l_1(\xi).
  \end{equation}
  Since \eqref{eq:G1} implies $0\leq l_1(\xi)\leq h_2(t,\xi)-l_0(\xi)$ for all $t\in [\tau_2(\xi),\tilde\tau_2(\xi)]$, we get 
  \begin{align}\label{eq:3.68}
    \int_{\tau_2}^{\tilde\tau_2} l_1(\xi)dt & \leq \int_{\tau_2}^{\tilde\tau_2} (h_2(t,\xi)-l_0(\xi))dt\\ \nn 
    & \leq \int_{\tau_2}^{\tilde\tau_2} \vert h_2(t,\xi)-\tilde h_2(t,\xi)\vert dt+\int_{\tau_2}^{\tilde\tau_2} (\tilde h_2(t,\xi)-l_0(\xi))dt\\ \nn 
    & \leq \int_{\tau_2}^{\tilde\tau_2}\vert h_2(t,\xi)-\tilde h_2(t,\xi)\vert dt +\int_{\tau_2}^{\tilde\tau_2} \bar{\tilde{h}}_2(t,\xi)dt. 
  \end{align}
  Since $\tilde \Theta_2=\mathcal{P}(\tilde \Theta)$ for $\tilde \Theta\in C([0,T], B_{\bar M})$, we get using \eqref{eq:sysfix2}, for $t\in[\tau_2(\xi),\tilde\tau_2(\xi)]$ that 
  \begin{equation}
    \tilde w_2(t,\xi) =\tilde w_2 (\tau_2(\xi),\xi)+\frac12 \int_{\tau_2}^t \bar{\tilde{h}}_2(t',\xi)dt'+\int_{\tau_2}^t (\tilde U^2-P(\tilde \Theta))\tilde q_2(t',\xi)dt'.
  \end{equation}
  According to Lemma~\ref{lem:G}, since $\xi\in\kappa_{1-\gamma}$, we have $\tilde \Theta_2(t,\xi)\in
  \Omega_1$ for all $t\in [0,\min(\tilde\tau_2(\xi),T)]$. Moreover, $\tilde w_2(t,\xi)\leq 0$ on the
  interval $[0, \min(\tilde\tau_2(\xi),T)]$ while $\tilde q_2(t,\xi)$ is decaying.  Furthermore,
  $\norm{\tilde U^2-P(\tilde \Theta)}_{L^\infty_TE}\leq C(\bar M)$,
  $w_2(\tau_2(\xi),\xi)=q_2(\tau_2(\xi),\xi)=0$ and $w_2(t,\xi)\geq 0$ for all $t\in[\tau_2(\xi),
  T]$. Thus we get that 
      \begin{align}\nn
      \frac12 \int_{\tau_2}^{\tilde\tau_2} \bar{\tilde{h}}_2(t',\xi)dt'
      & = \tilde w_2(\tilde\tau_2(\xi),\xi)-\tilde w_2(\tau_2(\xi),\xi)-\int_{\tau_2}^{\tilde\tau_2}(\tilde U^2-P(\tilde \Theta))\tilde q_2(t',\xi)dt'\\  \nn
      & \leq -\tilde w_2(\tau_2(\xi),\xi) +C(\bar M)\int_{\tau_2}^{\tilde\tau_2}\tilde q_2(\tau_2(\xi),\xi) dt'\\ \nn
      & \leq w_2(\tau_2(\xi),\xi)-\tilde w_2(\tau_2(\xi),\xi)\\ \nn
      & \quad +C(\bar M)\int_{\tau_2}^{\tilde\tau_2} \tilde q_2(\tau_2(\xi),\xi)-q_2(\tau_2(\xi),\xi)dt'\\ \label{eq:3.70b}
      &\leq w_2(\tau_2(\xi),\xi)-\tilde w_2(\tau_2(\xi),\xi)+ C(\bar M)T \norm{X_2(\dott,\xi)-\tilde X_2(\dott,\xi)}_{L^\infty_{\tau_2}} \\   
      & \leq w_2(\tau_2(\xi),\xi)-\tilde w_2(\tau_2(\xi),\xi)+ C(\bar M)T \norm{X_2-\tilde X_2}_{L^\infty_T \bar V}.  \nn
    \end{align}
  Combining the above estimates  yields
  \begin{align}\label{eq:estttau2}
    \int_{\tau_2}^{\tilde\tau_2}\vert \bar h_2(t,\xi)-\bar{\tilde{h}}_2(t,\xi)\vert dt & \leq \int_{\tau_2}^{\tilde\tau_2} \vert h_2(t,\xi)-\tilde h_2(t,\xi)\vert dt+\int_{\tau_2}^{\tilde\tau_2} l_1(\xi)dt\\ \nn 
    & \leq 2\int_{\tau_2}^{\tilde\tau_2} \vert h_2(t,\xi)-\tilde h_2(t,\xi)\vert dt+ \int_{\tau_2}^{\tilde\tau_2} \bar{\tilde{h}}_2(t,\xi)dt\\ \nn 
    & \leq 2 (w_2(\tau_2(\xi),\xi)-\tilde w_2(\tau_2(\xi),\xi))\\ \nn 
    & \quad+2\int_{\tau_2}^{\tilde\tau_2} \vert h_2(t,\xi)-\tilde h_2(t,\xi)\vert dt+C(\bar M)T\norm{X_2-\tilde X_2}_{L^\infty_T \bar V}.
  \end{align}

  For $t\in[\tilde\tau_2(\xi),T]$, we have $\bar h_2(t,\xi)=h_2(t,\xi)-l_0(\xi)-l_1(\xi)$ and $\bar{\tilde{h}}_2(t,\xi)=\tilde h_2(t,\xi)-l_0(\xi)-\tilde l_1(\xi)$, and, in particular,  
  \begin{equation}
    \vert \bar h_2(t,\xi)-\bar{\tilde{h}}_2(t,\xi)\vert \leq \vert h_2(t,\xi)-\tilde h_2(t,\xi)\vert +\vert l_1(\xi)-\tilde l_1(\xi)\vert,
  \end{equation}
  where $l_1(\xi)=\alpha (h_2(\tau_2(\xi),\xi)-l_0(\xi))$ and $\tilde l_1(\xi)=\alpha(\tilde h_2(\tilde\tau_2(\xi),\xi)-l_0(\xi))$. Thus we can write
  \begin{align}\label{eq:estttau9}
    \vert l_1(\xi)-\tilde l_1(\xi)\vert & =\alpha\vert h_2(\tau_2(\xi),\xi)-\tilde h_2(\tilde\tau_2(\xi),\xi)\vert\\ \nn 
    & \leq \alpha( \vert h_2(\tau_2(\xi),\xi)-h_2(\tilde\tau_2(\xi),\xi)\vert +\vert h_2(\tilde\tau_2(\xi),\xi)-\tilde h_2(\tilde\tau_2(\xi),\xi)\vert).
  \end{align}
  The first term on the right-hand side can be estimated, using \eqref{eq:sysfix2}, as follows
  \begin{align}\label{eq:estttau8}
    \vert h_2(\tau_2(\xi),\xi)-h_2(\tilde\tau_2(\xi),\xi)\vert & \leq \int_{\tau_2}^{\tilde\tau_2} 2\vert U^2-P(\Theta)\vert w_2(t,\xi)dt\\ \nn 
    & \leq C(\bar M) \int_{\tau_2}^{\tilde\tau_2} w_2(t,\xi)-\tilde w_2(t,\xi) dt \\ \nn 
    & \leq C(\bar M)T \norm{X_2-\tilde X_2}_{L^\infty_T\bar V},
  \end{align}
  where we used that $w_2(t,\xi)\geq 0$ and $\tilde w_2(t,\xi)\leq 0$ for all $t\in[\tau_2(\xi),\tilde\tau_2(\xi)]$.
  Combining the above estimates yields
  \begin{align}\label{eq:estttau3}
    \int_{\tilde\tau_2}^T \vert \bar h_2(t,\xi)-\bar{\tilde{h}}_2(t,\xi)\vert dt 
    & \leq \int_{\tilde\tau_2}^T \vert h_2(t,\xi)-\tilde h_2(t,\xi)\vert dt +T\vert l_1(\xi)-\tilde l_1(\xi)\vert \\ \nn 
    & \leq \int_{\tilde\tau_2}^T \vert h_2(t,\xi)-\tilde h_2(t,\xi)\vert dt\\ \nn 
    & \quad + \alpha T(\vert h_2(\tau_2(\xi),\xi)-h_2(\tilde\tau_2(\xi),\xi)\vert \\ \nn
    &\qquad\quad +\vert h_2(\tilde\tau_2(\xi),\xi)-\tilde h_2(\tilde \tau_2(\xi),\xi)\vert )\\ \nn
    & \leq \int_{\tilde\tau_2}^T \vert h_2(t,\xi)-\tilde h_2(t,\xi)\vert dt + C(\bar M)T\norm{X_2-\tilde X_2}_{L^\infty_T \bar V}.
  \end{align}

  Adding \eqref{eq:estttau1},  \eqref{eq:estttau2}, and \eqref{eq:estttau3}, we obtain 
  \begin{multline}\label{eq:estttau4}
    \int_0^T \vert \bar h_2(t,\xi)-\bar{\tilde{h}}_2(t,\xi)\vert dt\\ 
    \leq 2\int_0^T \vert h_2(t,\xi)-\tilde h_2(t,\xi)\vert dt+C(\bar M)\norm{X_2-\tilde X_2}_{L^\infty_T \bar V}.
  \end{multline}
  Note that this inequality is true for all $\xi\in\kappa_{1-\gamma}$. Since $\meas(\kappa_{1-\gamma})\leq C(\bar M)$, we can apply Fubini's theorem and use \eqref{eq:estttau4} to obtain 
  \begin{equation}\label{eq:3.77}
    \int_0^T \int_{\kappa_{1-\gamma}} \vert \bar h_2(t,\xi)-\bar{\tilde{h}}_2(t,\xi)\vert d\xi dt \leq C(\bar M)\norm{X_2-\tilde X_2}_{L^\infty_T \bar V}.
  \end{equation}
  Combining \eqref{eq:3.65} and \eqref{eq:3.77} finally yields \eqref{eq:contGamma}.

  (\textit{iii}): A close inspection of the proof of (\textit{ii}) reveals that we only need to improve  \eqref{eq:estttau2}. Let us consider $\xi\in \kappa_{1-\gamma}$ and assume for the moment that $0<\tau_2(\xi)<\tilde\tau_2(\xi)\leq T$, since all other cases can be derived from this one. For $t\in[\tau_2(\xi),\tilde\tau_2(\xi))$, we have 
  \begin{equation*}
    \int_{\tau_2}^{\tilde\tau_2}\vert \bar h_2(t,\xi)-\bar{\tilde{h}}_2(t,\xi)\vert dt\leq 2 (w_2(\tau_2(\xi),\xi)-\tilde w_2(\tau_2(\xi),\xi))+C(\bar M)T\norm{X_2-\tilde X_2}_{L^\infty_T \bar V}.
  \end{equation*}
  In order to improve this estimate we will use that $\Theta$  not only is an element of $C([0,T], \bar V)$ like in (\textit{ii}), but also belongs to $\Ima(\mathcal{P})$. From \eqref{eq:sysfix2}, we get that 
  \begin{equation}\label{eq:3.79}
    \begin{aligned}
    &  w_2(\tau_2(\xi),\xi)-\tilde w_2(\tau_2(\xi),\xi) \\
      &\qquad \leq \frac12 \int_0^{\tau_2} \vert \bar h_2(t,\xi)-\bar{\tilde{h}}_2(t,\xi) \vert dt 
      +\int_0^{\tau_2}\vert U^2-P(\Theta)\vert(t,\xi)\vert q_2(t,\xi)-\tilde q_2(t,\xi)\vert dt\\ 
      &\qquad \quad + \int_0^{\tau_2} \vert U^2-P(\Theta)-\tilde U^2+P(\tilde \Theta)\vert (t,\xi)\tilde q_2(t,\xi)dt\\ 
      &\qquad \leq \frac12 \int_0^{\tau_2} \vert h_2(t,\xi)-\tilde{h}_2(t,\xi) \vert dt +C(\bar M)T \norm{X_2-\tilde X_2}_{L^\infty_T \bar V}\\  
      &\qquad \quad +C(\bar M)\gamma \norm{U^2-P(\Theta)-\tilde U^2+P(\tilde \Theta)}_{L^1_T E}\\  
      &\qquad \leq C(\bar M) \Big(T(\norm{X_2-\tilde X_2}_{L^\infty_T \bar V}+\norm{X-\tilde X}_{L^\infty_T \bar V})+\gamma \Gamma(\Theta,\tilde \Theta)\Big),
    \end{aligned}
  \end{equation}
  where we used \eqref{eq:estPQGamma} and that $\frac{q_2}{q_2+\bar h_2}(t,\xi)\leq \gamma$ for all $t\in[0,\tau_2(\xi)]$, and therefore $q_2(t,\xi)=(q_2+\bar h_2)(t,\xi)\frac{q_2}{q_2+\bar h_2}(t,\xi)\leq C(\bar M)\gamma$ for all $t\in[0,\tau_2(\xi)]$. Thus 
  \begin{align*}
  &  \int_{\tau_2}^{\tilde\tau_2}\vert \bar h_2(t,\xi)-\bar{\tilde{h}}_2(t,\xi)\vert dt \\
    &\qquad\qquad\qquad \leq 2 (w_2(\tau_2(\xi),\xi)-\tilde w_2(\tau_2(\xi),\xi))+C(\bar M)T\norm{X_2-\tilde X_2}_{L^\infty_T \bar V}\\ \nn 
    &\qquad\qquad\qquad \leq  C(\bar M)  \Big(T(\norm{X_2-\tilde X_2}_{L^\infty_T \bar V}+\norm{X-\tilde X}_{L^\infty_T \bar V})+\gamma \Gamma(\Theta,\tilde \Theta) \Big). 
  \end{align*}

  As in (\textit{ii}) we can conclude that for all $\xi\in\kappa_{1-\gamma}$
  \begin{equation}\label{eq:estttau5}
    \int_0^T \vert \bar h_2(t,\xi)-\bar{\tilde{h}}_2(t,\xi)\vert dt\leq C(\bar M)  \Big(T(\norm{X_2-\tilde X_2}_{L^\infty_T \bar V}+\norm{X-\tilde X}_{L^\infty_T \bar V})+\gamma \Gamma(\Theta,\tilde \Theta) \Big).
  \end{equation}
  Since $\meas(\kappa_{1-\gamma})\leq C(\bar M)$, we can apply Fubini's theorem and use \eqref{eq:estttau5} to obtain 
  \begin{multline}
    \int_0^T \int_{\kappa_{1-\gamma}} \vert \bar h_2(t,\xi)-\bar{\tilde{h}}_2(t,\xi)\vert d\xi dt \\
    \leq  C(\bar M)  \Big(T(\norm{X_2-\tilde X_2}_{L^\infty_T \bar V}+\norm{X-\tilde X}_{L^\infty_T \bar V})+\gamma \Gamma(\Theta,\tilde \Theta) \Big).
  \end{multline}
  
  (\textit{iv}): First we estimate $\norm{Z_2-\tilde Z_2}_{L^\infty_T \bar W(\kappa_{1-\gamma}^c)}$. For $\xi\in \kappa_{1-\gamma}^c$ we have $Z_2-\tilde Z_2=\bar Z_2-\bar{\tilde{Z}}_2$, and, in particular,  $\bar Z_{2,t}=F(\Theta)\bar Z_2$ and $\bar{\tilde{Z}}_{2,t}=F(\tilde \Theta)\bar{\tilde{Z}}_2$ for all $t\in[0,T]$. Hence 
  \begin{align}\label{eq:estdifzgc2}
    \norm{(Z_2-\tilde Z_2)(t,\dott)}_{\bar W(\kappa_{1-\gamma}^c)}& = \norm{(\bar Z_2-\bar{\tilde{Z}}_2)(t,\dott)}_{\bar W(\kappa_{1-\gamma}^c)}\\ \nn 
    & \leq \int_0^t \norm{(F(\Theta)-F(\tilde \Theta))\bar Z_2(t',\dott)}_{\bar W(\kappa_{1-\gamma}^c)}dt'\\ \nn 
    & \quad +\int_0^t \norm{F(\tilde \Theta)(\bar Z_2-\bar{\tilde{Z}}_2)(t',\dott)}_{\bar W(\kappa_{1-\gamma}^c)}dt'.
  \end{align}
  We have that
  \begin{align}\label{eq:estttau6}
    \left(F(\Theta)-F(\tilde
      \Theta)\right)\bar Z_2& =\Big(0,\big((U^2-P(\Theta))-(\tilde
    U^2-P(\tilde
    \Theta))\big)q_2,\\ \nn
    &\qquad 2\big((U^2-P(\Theta))-(\tilde
    U^2-P(\tilde \Theta))\big)w_2,0\Big),
  \end{align}
  and therefore
  \begin{equation}
    \label{eq:Fbd2}
    \norm{(F(\Theta)-F(\tilde \Theta))\bar Z_2}_{L_T^1\bar W}
    \leq C(\bar M)\norm{\big(U^2-P(\Theta)\big)-\big(\tilde U^2-
      P(\tilde \Theta)\big)}_{L_T^1E}. 
  \end{equation}
  Applying Gronwall's lemma to \eqref{eq:estdifzgc2}, as $\norm{F(\tilde \Theta)}_{L^\infty_T L^\infty}\leq C(\bar M)$, we get 
  \begin{equation}
    \label{eq:gronapp2}
    \norm{Z_2-\tilde Z_2}_{L^\infty_T \bar W(\kappa_{1-\gamma}^c)}\leq
    C(\bar M)\norm{(F(\Theta)-F(\tilde \Theta))\bar Z_2}_{L_T^1\bar W}. 
  \end{equation}
  Hence, we get by \eqref{eq:Fbd2} that
  \begin{equation}
    \label{eq:ltl2est2}
    \norm{Z_2-\tilde Z_2}_{L_T^\infty
      \bar W(\kappa_{1-\gamma}^c)}\leq C(\bar M)\norm{\big(P(\Theta)-U^2\big)- \big(P(\tilde \Theta)-\tilde U^2\big)}_{L_T^1 E}. 
  \end{equation}
  Thus, we have by \eqref{eq:estPQGamma} that 
  \begin{equation}
    \label{eq:pliptau22}
    \norm{Z_2-\tilde Z_2}_{L_T^\infty \bar W(\kappa_{1-\gamma}^c)}\leq C(\bar M)\big(T\norm{X-\tilde X}_{L^\infty_T\bar V}+\Gamma(\Theta,\tilde \Theta)\big).
  \end{equation}
  To estimate $\norm{Z_2-\tilde Z_2}_{L^\infty_T \bar W(\kappa_{1-\gamma})}$, we fix
  $\xi\in\kappa_{1-\gamma}$ and assume without loss of generality that
  $0<\tau_2(\xi)<\tilde\tau_2(\xi)\leq T$.  For $t\in[0,\tau_2(\xi)]$ we can conclude as for
  $\xi\in\kappa_{1-\gamma}^c$ to obtain
  \begin{equation}\label{eq:estttau7}
    \vert Z_2(t,\xi)-\tilde Z_2(t,\xi)\vert \leq C(\bar M) \Big(T\norm{X-\tilde X}_{L^\infty_T\bar V}+\Gamma(\Theta,\tilde \Theta) \Big). 
  \end{equation}
  For $t\in [\tau_2(\xi),\tilde\tau_2(\xi))$ we have $\bar Z_{2,t}=F(\Theta)\bar Z_2$ and
  $\bar{\tilde{Z}}_{2,t}=F(\tilde \Theta)\bar{\tilde{Z}}_2$, but $h_2(t,\xi)-\tilde h_2(t,\xi)=\bar
  h_2(t,\xi)-\bar{\tilde{h}}_2(t,\xi)+l_1(\xi)$. Thus it follows, using \eqref{eq:3.68}, that
  \begin{align}
    \vert (Z_2-\tilde Z_2)(t,\xi)\vert & \leq \vert (Z_2-\tilde Z_2)(\tau_2(\xi),\xi)\vert +\int_{\tau_2}^t \frac 12 l_1(\xi)dt' \\ \nn 
    & \quad +\int_{\tau_2}^t \vert (F(\Theta)-F(\tilde \Theta))Z_2(t',\xi)\vert dt'
 +\int_{\tau_2}^t \vert F(\tilde \Theta)(Z_2-\tilde Z_2)(t',\xi)\vert dt'\\ \nn 
&\leq \vert (Z_2-\tilde Z_2)(\tau_2(\xi),\xi)\vert +\int_{\tau_2}^t \frac 12 \bar{\tilde {h}}_2(t',\xi)dt' \\ \nn 
& \quad +\int_{\tau_2}^t \vert (F(\Theta)-F(\tilde \Theta))Z_2(t',\xi)\vert dt'\\ \nn 
& \quad  +\int_{\tau_2}^t (\vert F(\tilde \Theta)\vert +1)\vert(Z_2-\tilde Z_2)(t',\xi)\vert dt' ,
  \end{align}
  where $(F(\Theta)-F(\tilde \Theta))Z_2=(F(\Theta)-F(\tilde \Theta))\bar Z_2$ is given by
  \eqref{eq:estttau6}, which depends neither on $h_2$ and $\tilde h_2$ nor on $\bar h_2$ and $\bar{\tilde{h}}_2$.  
Applying Gronwall's inequality then yields
  \begin{align}
    \vert (Z_2-\tilde Z_2)(t,\xi)\vert \leq C(\bar M)&  \Big(\vert (Z_2-\tilde Z_2)(\tau_2(\xi),\xi)\vert +\int_{\tau_2}^t \frac 12 \bar{\tilde{h}}_2(t',\xi)dt'\\ \nn 
    & \qquad +\int_{\tau_2}^t \vert (F(\Theta)-F(\tilde \Theta))Z_2(t',\xi)\vert dt' \Big). 
  \end{align}
  Then we get from \eqref{eq:estttau7} and \eqref{eq:Fbd2}, 
  together with 
  \begin{align}
    \int_{\tau_2}^t \frac 12 \bar{\tilde{h}}_2(t',\xi)dt'& \leq w_2(\tau_2(\xi),\xi)-\tilde w_2(\tau_2(\xi),\xi)+C(\bar
    M)T\norm{X_2(\dott,\xi)-\tilde X_2(\dott,\xi)}_{L^\infty_{\tau_2}}\\ 
    \nn & \leq C(\bar M)
    \Big(T(\norm{X_2(\dott,\xi)-\tilde X_2(\dott,\xi)}_{L^\infty_{\tau_2}}\\
    &\qquad\qquad\qquad\qquad +\norm{X-\tilde
      X}_{L^\infty_T\bar V})+\gamma \Gamma(\Theta,\tilde \Theta) \Big) \nn\\
       \nn & \leq C(\bar M)(T\norm{X-\tilde
      X}_{L^\infty_T\bar V}+(T+\gamma)\Gamma(\Theta,\tilde \Theta)),
  \end{align}
  where we used \eqref{eq:3.70b}, \eqref{eq:3.79}, and \eqref{eq:estttau7}, that 
  \begin{equation}\label{eq:estttau10}
    \vert (Z_2-\tilde Z_2)(t,\xi)\vert \leq C(\bar M) \big(T\norm{X-\tilde X}_{L^\infty_T\bar V}+\Gamma(\Theta,\tilde \Theta) \big).
  \end{equation}

  For $t\in[\tilde\tau_2(\xi),T]$, we have $\bar Z_{2,t}=F(\Theta)\bar Z_2$ and $\bar{\tilde{Z}}_{2,t}=F(\tilde \Theta)\bar{\tilde{Z}}_2$, but $h_2(t,\xi)-\tilde h_2(t,\xi)=\bar h_2(t,\xi)-\bar{\tilde{h}}_2(t,\xi)+l_1(\xi)-\tilde l_1(\xi)$. Thus it follows that 
  \begin{align}
    \vert (Z_2-\tilde Z_2)(t,\xi)\vert & \leq \vert (Z_2-\tilde Z_2)(\tilde\tau_2(\xi),\xi)\vert +\int_{\tilde\tau_2}^t \frac 12 \vert l_1(\xi)-\tilde l_1(\xi)\vert dt' \\ \nn 
    & \quad +\int_{\tilde\tau_2}^t \vert (F(\Theta)-F(\tilde \Theta))Z_2(t',\xi)\vert dt'\\ \nn 
    &  \quad +\int_{\tilde\tau_2}^t \vert F(\tilde \Theta)(Z_2-\tilde Z_2)(t',\xi)\vert dt',
  \end{align}
  where $(F(\Theta)-F(\tilde \Theta))Z_2=(F(\Theta)-F(\tilde \Theta))\bar Z_2$ is given by \eqref{eq:estttau6}. Applying Gronwall's inequality then  yields
  \begin{align}
    \vert (Z_2-\tilde Z_2)(t,\xi)\vert& \leq C(\bar M)  \Big(\vert (Z_2-\tilde Z_2)(\tilde\tau_2(\xi),\xi)\vert +\int_{\tilde\tau_2}^t \frac 12 \vert l_1(\xi)-\tilde l_1(\xi)\vert dt'\\ \nn 
    & \qquad +\int_{\tilde \tau_2}^t \vert (F(\Theta)-F(\tilde \Theta))Z_2(t',\xi)\vert dt' \Big). 
  \end{align}
  Then we get from \eqref{eq:estttau10} and \eqref{eq:Fbd2}, together with 
  \begin{align}
    \vert l_1(\xi)-\tilde l_1(\xi)\vert & \leq \alpha \vert h_2(\tilde\tau_2(\xi),\xi)-\tilde h_2(\tilde\tau_2(\xi),\xi)\vert +C(\bar M) T \norm{X_2(\dott,\xi)-\tilde X_2(\dott,\xi)}_{L^\infty_{\tilde\tau_2}}\\ \nn 
    & \leq C(\bar M)\big( T\norm{X-\tilde X}_{L^\infty_T\bar V}+\Gamma(\Theta,\tilde \Theta)\big),
  \end{align}
  where we used \eqref{eq:estttau9}, \eqref{eq:estttau8}, and \eqref{eq:estttau10}, 
  that 
  \begin{equation}\label{eq:estttau11}
    \vert (Z_2-\tilde Z_2)(t,\xi)\vert \leq C(\bar M)\big(T\norm{X-\tilde X}_{L^\infty_T\bar V}+\Gamma(\Theta,\tilde \Theta)\big).
  \end{equation}

  Combining \eqref{eq:estttau7}, \eqref{eq:estttau10}, and \eqref{eq:estttau11}, we get 
  \begin{equation}\label{eq:estttau12}
    \vert (Z_2-\tilde Z_2)(t,\xi)\vert \leq C(\bar M) \big(T\norm{X-\tilde X}_{L^\infty_T\bar V}+\Gamma(\Theta,\tilde \Theta) \big),
  \end{equation}
  for all $t\in[0,T]$. Since $\meas(\kappa_{1-\gamma})\leq C(\bar M)$, the estimate \eqref{eq:estttau12} implies 
  \begin{equation}\label{eq:estttau13}
    \norm{Z_2-\tilde Z_2}_{L^\infty_T\bar W(\kappa_{1-\gamma})} \leq C(\bar M) \big(T\norm{X-\tilde X}_{L^\infty_T\bar V}+\Gamma(\Theta,\tilde \Theta) \big).
  \end{equation}
  Combining \eqref{eq:pliptau22} and \eqref{eq:estttau13}, we get 
  \begin{equation}\label{eq:estdiffZ2}
    \norm{Z_2-\tilde Z_2}_{L^\infty_T\bar W} \leq C(\bar M) \big(T\norm{X-\tilde X}_{L^\infty_T\bar V}+\Gamma(\Theta,\tilde \Theta) \big).
  \end{equation}

  From \eqref{eq:sysfix2} we obtain
  \begin{equation} \label{eq:estdiffU2}
    \begin{aligned}
      \norm{U_2-\tilde U_2}_{L_T^\infty E}&\leq \norm{Q(\Theta)- Q(\tilde \Theta)}_{L_T^1 E} \\
      &  \leq C(\bar
      M) \big(T\norm{X-\tilde X}_{L^\infty_T\bar V}+\Gamma(\Theta,\tilde \Theta) \big)
    \end{aligned}
  \end{equation}
  and 
  \begin{equation}
    \label{eq:estdiffzet2}
    \begin{aligned}
      \norm{\zeta_2-\tilde \zeta_2}_{L_T^\infty
        L^\infty}&\leq T\norm{U_2-\tilde U_2}_{L_T^\infty L^\infty}\\
      &\leq C(\bar M)\big(T\norm{X-\tilde X}_{L^\infty_T\bar V}+\Gamma(\Theta,\tilde \Theta)\big).
    \end{aligned}
  \end{equation}

  Thus adding 
  \eqref{eq:estdiffZ2}, \eqref{eq:estdiffU2}, and \eqref{eq:estdiffzet2},
  we conclude that
  \begin{equation}\label{eq:estdc1}
    \norm{X_2-\tilde X_2}_{L^\infty_T\bar V}\leq C(\bar M)\big(T\norm{X-\tilde X}_{L^\infty_T\bar V}+\Gamma(\Theta,\tilde \Theta)\big).
  \end{equation}
\end{proof}

\begin{remark}
  Recall that in the case of conservative solutions, i.e.,  $\alpha=0$, we have that $\bar
  h(t,\xi)=h(t,\xi)$ for all $\xi\in\Real$ and $t\in\Real$, and hence the above proof simplifies
  considerably in that case. In particular, it suffices to prove (\textit{iv}) since one can
  conclude that $\Gamma(\Theta,\tilde \Theta)\leq C(\bar M)T\norm{X-\tilde X}_{L^\infty_T \bar V}$ as in
  \eqref{eq:gammanorm}.
\end{remark}

\begin{theorem}[Short time solution]
  \label{th:short0}
  Given $M>0$, for any initial data $\Theta_0=(y_0,U_0,y_{0,\xi}, U_{0,\xi}, \bar h_0,h_0,r_0)\in\G\cap B_M$, there exists a
  time $T>0$, which only depends on $M$, such that there exists a unique solution $\Theta=(y,U,y_\xi,U_\xi,\bar h,
  h,r)\in C([0,T],\bar V)$ of \eqref{eq:sysdiss} with $\Theta(0)=\Theta_0$.  Moreover $\Theta(t)\in\G$ for all
  $t\in[0,T]$.
\end{theorem}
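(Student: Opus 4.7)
The plan is to construct the solution as a fixed point of $\mathcal{P}$ by Picard iteration, using Lemma~\ref{lem:estshort} for invariance and Lemma~\ref{lem:contrgamma} for contraction. By Lemma~\ref{lem:estshort}, there exist $\bar T_0>0$ and $\bar M>0$ depending only on $M$ such that $\mathcal{P}\colon C([0,T],B_{\bar M})\to C([0,T],B_{\bar M})$ for every $T\leq\bar T_0$. Starting from the constant curve $\Theta^0(t,\xi):=\Theta_0(\xi)$, define $\Theta^{n+1}=\mathcal{P}(\Theta^n)$; then $\Theta^n\in\Ima(\mathcal{P})$ for all $n\geq 1$, so the contraction estimates of Lemma~\ref{lem:contrgamma} apply to any consecutive pair $(\Theta^n,\Theta^{n+1})$.

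On $\Ima(\mathcal{P})$ introduce the weighted distance
\[
d(\Theta,\tilde\Theta)=\norm{X-\tilde X}_{L^\infty_T\bar V}+\lambda\,\Gamma(\Theta,\tilde\Theta),
\]
with $\lambda>0$ to be chosen. Substituting Lemma~\ref{lem:contrgamma}(iv) into Lemma~\ref{lem:contrgamma}(iii) and relabelling the constants $C(\bar M)$ as is customary, one obtains
\[
d(\mathcal{P}(\Theta),\mathcal{P}(\tilde\Theta))\leq\bigl(C(\bar M)T+\lambda C(\bar M)T(1+C(\bar M)T)\bigr)\norm{X-\tilde X}_{L^\infty_T\bar V}+\bigl(C(\bar M)+\lambda C(\bar M)(T+\gamma)\bigr)\Gamma(\Theta,\tilde\Theta).
\]
Choose first $\gamma\in(0,1/2)$ so small (depending only on $\bar M$) that $C(\bar M)\gamma<\tfrac14$; next pick $\lambda\geq 4C(\bar M)$; then shrink $T\leq\bar T\leq\bar T_0$ until every remaining $T$-term is bounded by $\tfrac14\min(1,\lambda)$. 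This yields $d(\mathcal{P}(\Theta),\mathcal{P}(\tilde\Theta))\leq\tfrac12\,d(\Theta,\tilde\Theta)$ for all $\Theta,\tilde\Theta\in\Ima(\mathcal{P})$, so $\{\Theta^n\}_{n\geq 1}$ is Cauchy in $L^\infty_T\bar V$ and in the $\Gamma$-seminorm. Its limit $\Theta\in C([0,T],B_{\bar M})$ satisfies $\Theta=\mathcal{P}(\Theta)$ and therefore solves the system \eqref{eq:sysdiss}--\eqref{eq:defderatjumpA} with $\Theta(0)=\Theta_0$. Uniqueness is immediate: any solution in $C([0,T],\bar V)$ with the same initial datum is a fixed point of $\mathcal{P}$, hence equals $\Theta$.

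To conclude $\Theta(t)\in\G$ for all $t\in[0,T]$, observe that $X(t,\cdot)\in\bar V$ and $h(t,\cdot)\in L^1(\Real)$ follow from invariance of $B_{\bar M}$; Lemma~\ref{lem:G}(i) yields \eqref{eq:lagcoord3} and \eqref{eq:lagcoord6}; Lemma~\ref{lem:G}(ii) yields \eqref{eq:lagcoord5}; the inequality $h\geq\bar h$ in \eqref{eq:lagcoord7} is preserved under the jump rule \eqref{eq:defderatjumpA} together with $\bar h_t=h_t$ between jumps; the decay condition \eqref{eq:lagcoord4} persists because $U(t,\cdot)\in H^1$ and $\zeta_t=U$; and \eqref{eq:lagcoord2} on $g(\Theta)-1\in E$ follows directly from the definition of $g$ together with $0\leq\bar h\leq h$.

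The main obstacle is that Lemma~\ref{lem:contrgamma}(iv) carries the constant $C(\bar M)$—rather than $C(\bar M)T$—in front of $\Gamma(\Theta,\tilde\Theta)$, so no amount of time-shrinking alone makes $\mathcal{P}$ contractive. The decisive input is the factor $\gamma$ that appears in Lemma~\ref{lem:contrgamma}(iii): iterating $\mathcal{P}$ once sharpens the $\Gamma$-contribution to $C(\bar M)(\gamma+T)$, which, once $\Gamma$ is weighted by a sufficiently large $\lambda$, delivers genuine contraction in the combined metric $d$. Identifying this two-parameter interplay, and verifying it on $\Ima(\mathcal{P})$ rather than on the full ball $B_{\bar M}$, is the real content of the argument.
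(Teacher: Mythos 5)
Your skeleton is the same as the paper's: Lemma~\ref{lem:estshort} for invariance of $C([0,T],B_{\bar M})$, Picard iterates $\Theta^{n+1}=\mathcal{P}(\Theta^n)$, and the combination of Lemma~\ref{lem:contrgamma}~(\textit{iii})--(\textit{iv}); your weighted metric $d=\norm{X-\tilde X}_{L^\infty_T\bar V}+\lambda\Gamma$ is only a repackaging of the paper's two-step recursion and that bookkeeping is fine. The genuine gap is the single sentence ``its limit satisfies $\Theta=\mathcal{P}(\Theta)$ and therefore solves the system.'' Because the system is discontinuous and the jump times of the iterates vary with $n$, convergence of $X_n$ in $L^\infty_T\bar V$ together with smallness of $\Gamma$ does not by itself show that the limit exhibits the prescribed jump $\bar h(\tau_1(\xi),\xi)=(1-\alpha)\bar h(\tau_1(\xi)-0,\xi)$ at its \emph{own} breaking times. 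To conclude $\Theta=\mathcal{P}(\Theta)$ abstractly you would need continuity of $\mathcal{P}$ at the limit, or to apply Lemma~\ref{lem:contrgamma} to the pair $(\Theta^n,\Theta)$; but that lemma is proved only for pairs in $\Ima(\mathcal{P})$ with common initial data, and you have not shown that the limit belongs to $\Ima(\mathcal{P})$ (no closedness of the image is available). This is exactly where the paper does real work: it upgrades the $\bar V$-convergence to pointwise convergence $\norm{X_n(\dott,\xi)-X(\dott,\xi)}_{L^\infty_T}\to 0$ for a.e.\ $\xi$ (via the set $\mathcal{A}$), proves that the breaking times converge, $\tau_{1,n}(\xi)\to\tau_1(\xi)$, by estimating $\tau_{1,n}-\tau_{1,n-1}$ through $\int_{\tau_{1,n-1}}^{\tau_{1,n}}(q_n+\bar h_n)\,ds$ and a lower bound on $q+h-l_0$, and only then verifies the jump relation \eqref{eq:defderatjumpA} for the limit. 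Without some version of this argument your proof does not deliver a solution in the sense of Definition~\ref{def:sol}.

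A second omission: the iteration is run in the variables $(q,w)$, so a fixed point a priori solves only \eqref{eq:ODEsys}, not \eqref{eq:sysdiss}. You must still identify $q=y_\xi$ and $w=U_\xi$; in the paper this requires showing that $y$, $U$ and $Q(\Theta)$ are Lipschitz in $\xi$ (hence a.e.\ differentiable by Rademacher), computing $Q_\xi(\Theta)=-\frac12\bar h-U^2q+P(\Theta)y_\xi$, and closing a Gronwall estimate for $(q-y_\xi,w-U_\xi)$ as in \eqref{eq:qQxi}. This step is absent from your proposal, yet your final paragraph implicitly uses it (e.g.\ to get \eqref{eq:lagcoord6} and \eqref{eq:lagcoord3} in the form stated for $y_\xi,U_\xi$). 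Finally, the uniqueness sentence is too quick: a competing solution in $C([0,T],\bar V)$ is not automatically in $B_{\bar M}$ or in the set where your contraction estimates hold, so a short argument (restricting to a possibly smaller time interval on which the competitor stays in a ball, or invoking the a priori bounds of Lemma~\ref{lem:G}) is needed before ``hence equals $\Theta$.''
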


\begin{proof} In order to prove the existence and uniqueness of the solution we use an iteration
  argument. By Lemma~\ref{lem:estshort} there exist $T$ and $\bar M$ such that $\mathcal{P}$ is a mapping from $C([0,T], B_{\bar M})$ to $C([0,T], B_{\bar M})$. Now let $\Theta_1(t,\xi)=\Theta_0(\xi)$ for all $t\in[0,T]$ and set 
  $\Theta_{n+1}=\mathcal{P}(\Theta_n)$ and $\Theta_n(0,\xi)=\Theta_0(\xi)$ for all
  $n\in\Natural$. 
Then $\Theta_n$ belongs to $\Ima(\mathcal{P})$ for all $n\in\Natural$, and, in particular, $\Theta_n(t)\in B_{\bar M}$ for all $t\in[0,T]$.   
We  have
  \begin{align*}
    \norm{X_{n+1}-X_n}_{L^\infty_T\bar V}&\leq
    C(\bar M)\big(T\norm{X_n-X_{n-1}}_{L^\infty_T\bar V}+\Gamma(\Theta_n,\Theta_{n-1})\big)\\
    &\leq C(\bar M)\Big(T\big(\norm{X_n-X_{n-1}}_{L^\infty_T\bar V}+\norm{X_{n-1}-X_{n-2}}_{L^\infty_T\bar V}\big) \\
    &\qquad\qquad+\gamma\Gamma(\Theta_{n-1},\Theta_{n-2})\Big)\\
    &\leq C(\bar M)(T+\gamma)\big(\norm{X_n-X_{n-1}}_{L^\infty_T\bar V}+\norm{X_{n-1}-X_{n-2}}_{L^\infty_T\bar V}\big)
  \end{align*}
  where we used Lemma~\ref{lem:contrgamma}.
  Hence, for $T$
  and $\gamma$ small enough, we have
  \begin{equation*}
    \norm{X_{n+1}-X_n}_{L^\infty_T\bar V}
    \leq \frac14 (\norm{X_n-X_{n-1}}_{L^\infty_T\bar V}+\norm{X_{n-1}-X_{n-2}}_{L^\infty_T\bar V}) \quad \text{ for } n\geq 3.
  \end{equation*}
  Summation over all $n\geq 3$ on the left-hand
  side then yields
  \begin{align*}
    \sum_{n=3}^N   \norm{X_{n+1}-X_n}_{L^\infty_T\bar V}&\leq\frac14(\sum_{n=2}^{N-1}\norm{X_{n+1}-X_n}_{L^\infty_T\bar V}+\sum_{n=1}^{N-2}\norm{X_{n+1}-X_n}_{L^\infty_T\bar V}) \\
 \intertext{and}
    \frac12\sum_{n=1}^N   \norm{X_{n+1}-X_n}_{L^\infty_T\bar V}&\leq \norm{X_2-X_1}_{L^\infty_T\bar V}+\norm{X_3-X_2}_{L^\infty_T\bar V}
  \end{align*}
  independently of $N$. Therefore it implies that $\{X_n\}_{n=1}^\infty$ is a Cauchy sequence which
  converges to a unique limit $X$. 
In addition,  Lemma~\ref{lem:contrgamma} (\textit{i})-(\textit{ii}) implies that $h_n(t)$ converges to a unique limit $h(t)$ in $L^1(\Real)\cap L^2(\Real)$ and that $\bar h_n(t)$ converges to a unique limit $\bar h(t)$ in $L^1(\Real)\cap L^2(\Real)$. 
 
Next we want to show that for almost every $\xi\in\Real$ such that $\tau_1(\xi)\leq T$, we have 
\begin{equation*}
 X(\tau_1(\xi),\xi)=X(\tau_1(\xi)-0,\xi) \quad \text{ and }\quad \bar h(\tau_1(\xi),\xi)=(1-\alpha)\bar h(\tau_1(\xi)-0,\xi).
\end{equation*}
Let $\mathcal{A}$ be the following set 
\begin{align}
 \mathcal{A}=\{\xi\in\Real\mid & \, \vert \zeta_0(\xi)\vert\leq \norm{\zeta_0}_{L^\infty}, \vert U_0(\xi)\vert \leq \norm{U_0}_{L^\infty}, \vert q_0(\xi)-1\vert \leq \norm{q_0-1}_{L^\infty},\\ \nn 
&\quad  \vert w_0(\xi)\vert \leq\norm{w_0}_{L^\infty}, \vert h_0(\xi)\vert \leq \norm{h_0}_{L^\infty}, \vert r_0(\xi)\vert \leq \norm{r_0}_{L^\infty}\}.
\end{align}
We have that $\mathcal{A}$ has full measure, that is, $\meas(\mathcal{A}^c)=0$. Recall that
\begin{equation*}
\norm{X(\dott,\xi)}_{L^\infty_T}= \sup_{t\in[0,T]}\{ \vert y(t,\xi)-\xi\vert +\vert U(t,\xi)\vert +\vert q(t,\xi)\vert +\vert w(t,\xi)\vert +\vert h(t,\xi)\vert +\vert r(t,\xi)\vert\}.
\end{equation*}
Since  both $P(\Theta_n)(t,\xi)$ and $Q(\Theta_n)(t,\xi)$ belong to $H^1(\Real)$ for all $t\in [0,T]$, we have that 
\begin{equation*}
 \sup_{t\in[0,T]}\{\vert P(\Theta_n)(t,\xi)\vert +\vert Q(\Theta_n)(t,\xi)\vert \}\leq \norm{P(\Theta_n)}_{L^\infty_TE}+\norm{Q(\Theta_n)}_{L^\infty_T E}\leq  C(\bar M).
\end{equation*}
Moreover, following closely the proof of Lemma~\ref{lem:estshort}, we obtain that for any $\xi\in\mathcal{A}$ 
\begin{align*}
\norm{X_n(\dott,\xi)}_{L^\infty_T}\leq \norm{X_n}_{L^\infty_T\bar V}\leq C(\bar M),
\end{align*}
which  implies that $X_n(t,\xi)$ is continuous with respect to time. In particular, one obtains that for any $\xi\in\mathcal{A}$, 
\begin{equation}
 \norm{X_n(\dott,\xi)-X_{n-1}(\dott,\xi)}_{L^\infty_T}\leq \norm{X_n-X_{n-1}}_{L^\infty_T \bar V}.
\end{equation}
Thus 
$\norm{X_n(\dott,\xi)-X_{n-1}(\dott,\xi)}_{L^\infty_T} \to 0$ and $X_n(\xi)$ converges to the unique limit $X(\xi)$ for almost every $\xi\in\Real$. Thus if we can show that 
\begin{equation}\label{esttauest}
 \bar h(\tau_1(\xi),\xi)=\bar h(\tau_1(\xi)+0,\xi)=(1-\alpha)\bar h(\tau_1(\xi)-0,\xi),
\end{equation}
for all $\xi\in\mathcal{A}$ that experience wave breaking within $[0,T]$, $\Theta$ will be a solution of \eqref{eq:sysdiss} in the sense of Definition~\ref{def:sol}. Recall that for any $n\in\mathbb{N}$ we have  that 
\begin{equation}
 \bar h_n(\tau_{1,n}(\xi),\xi)=\bar h_n(\tau_{1,n}(\xi)+0,\xi)=(1-\alpha)\bar h_n(\tau_{1,n}(\xi)-0,\xi).
\end{equation}
Thus if we can show that $\tau_{1,n}(\xi)$ converges to a unique limit $\tau_1(\xi)$, the claim will follow since $\norm{X_n(\dott,\xi)-X(\dott,\xi)}_{L^\infty_T} \to 0$. We assume without loss of generality that $0< \tau_{1,n-1}(\xi)<\tau_{1,n}(\xi)\leq T$, since all other possible cases can be handled similarly. Moreover, we assume that $q_{0}(\xi)+h_0(\xi)-l_0(\xi)=C>0$, since otherwise $\bar h_0(\xi)=0=q_0(\xi)$, and \eqref{esttauest} is obviously satisfied. Then, as in the proof of Lemma~\ref{lem:G} (\textit{ii}), we can find a strictly positive constant $C_T$ such that $C_T<q(t,\xi)+h(t,\xi)-l_0(\xi)$ for all $t\in[0,T]$. In particular, we get 
\begin{equation}
 \tau_{1,n}(\xi)-\tau_{1,n-1}(\xi)=\int_{\tau_{1,n-1}(\xi)}^{\tau_{1,n}(\xi)} \frac{q_{n}(s,\xi)+\bar h_n(s,\xi)}{q_n(s,\xi)+\bar h_n(s,\xi)}ds\leq C_T^{-1}\int_{\tau_{1,n-1}(\xi)}^{\tau_{1,n}(\xi)} (q_{n}+\bar h_n)(s,\xi) ds,
\end{equation}
where we used that $\bar h_n(t,\xi)=h_n(t,\xi)-l_0(\xi)$ for $t\in[0,\tau_{1,n}(\xi))$. We split the integral on the right-hand side into two and study them separately.
For the first integral we get
\begin{align}
 \int_{\tau_{1,n-1}(\xi)}^{\tau_{1,n}(\xi)} q_n(s,\xi)ds& =\int_{\tau_{1,n-1}(\xi)}^{\tau_{1,n}(\xi)} \big(q_n(s,\xi)-q_n(\tau_{1,n-1}(\xi),\xi)+q_n(\tau_{1,n-1}(\xi),\xi)\big)ds\\ \nn 
& \leq\int_{\tau_{1,n-1}(\xi)}^{\tau_{1,n}(\xi)} q_n(\tau_{1,n-1}(\xi),\xi)ds\\ \nn
& =\int_{\tau_{1,n-1}(\xi)}^{\tau_{1,n}(\xi)} \big(q_n(\tau_{1,n-1}(\xi),\xi)-q_{n-1}(\tau_{1,n-1}(\xi),\xi)\big)ds\\ \nn
& \leq T \norm{q_n(\dott,\xi)-q_{n-1}(\dott,\xi)}_{L^\infty_T}\\ \nn 
& \leq T \norm{X_n-X_{n-1}}_{L^\infty_T \bar V}
\end{align}
where we used that $q_{n}(s,\xi)$ is decreasing on the interval $[\tau_{1,n-1}(\xi),\tau_{1,n}(\xi)]$, since $q_{n,t}(s,\xi)=w_n(s,\xi)\leq 0$ for all $s\in[\tau_{1,n-1}(\xi),\tau_{1,n}(\xi)]$ and that $q_{n-1}(\tau_{1,n-1}(\xi),\xi)=0$.
As far as the second integral is concerned, we can conclude as follows
\begin{align}
 \int_{\tau_{1,n-1}(\xi)}^{\tau_{1,n}(\xi)} \bar h_n(s,\xi)ds 
& = 2(w_{n}(\tau_{1,n}(\xi),\xi)-w_n(\tau_{1,n-1}(\xi),\xi))\\ \nn 
& \quad -2\int_{\tau_{1,n-1}(\xi)}^{\tau_{1,n}(\xi)} (U^2_{n-1}-P(\Theta_{n-1}))q_n(s,\xi)ds\\ \nn 
& \leq -2w_{n}(\tau_{1,n-1}(\xi),\xi)+C(\bar M)Tq_n(\tau_{1,n-1}(\xi),\xi)\\ \nn 
& \leq 2(w_{n-1}(\tau_{1,n-1}(\xi),\xi)-w_n(\tau_{1,n-1}(\xi),\xi))\\ \nn 
& \quad +C(\bar M)T(q_n(\tau_{1,n-1}(\xi),\xi)-q_{n-1}(\tau_{1,n-1}(\xi),\xi))\\ \nn 
& \leq \norm{w_n(\dott,\xi)-w_{n-1}(\dott,\xi)}_{L^\infty_T}\\ \nn
&\quad+C(\bar M)T\norm{q_n(\dott,\xi)-q_{n-1}(\dott,\xi)}_{L^\infty_T}\\ \nn 
& \leq (1+TC(\bar M))\norm{X_n-X_{n-1}}_{L^\infty_T \bar V},
\end{align}
where we used $w_n(\tau_{1,n}(\xi),\xi)=w_{n-1}(\tau_{1,n-1}(\xi),\xi)=q_{n-1}(\tau_{1,n-1}(\xi),\xi)=0$. Thus the sequence $\tau_{1,n}(\xi)$ converges to a unique limit $\tau_{1}(\xi)$ for every $\xi\in\mathcal{A}$, and, in particular, $\lim_{n\to\infty}h_n(\tau_{1,n}(\xi),\xi)=h(\tau_1(\xi),\xi)$ for all $\xi\in\mathcal{A}$. 
This implies since $\bar h(t,\xi)=\lim_{n\to\infty}\bar h_n(t,\xi)$ for $t\not =\tau_1(\xi)$, that 
\begin{align}
 \bar h(\tau_1(\xi),\xi)& =\lim_{s\downarrow \tau_1(\xi)} \bar h(s,\xi)\\ \nn
& =\lim_{s\downarrow \tau_1(\xi)}\lim_{n\to\infty} \bar h_n(s,\xi)\\ \nn 
& =\lim_{s\downarrow \tau_1(\xi)}\lim_{n\to\infty} \Big(h_n(s,\xi)-l_{1,n}(\xi)-l_0(\xi)\Big)\\ \nn
& =\lim_{s\downarrow \tau_1(\xi)}\lim_{n\to\infty}\Big(h_n(s,\xi)-\alpha(h_n(\tau_{1,n}(\xi), \xi)-l_0(\xi))-l_0(\xi)\Big)\\ \nn 
& =\lim_{s\downarrow \tau_1(\xi)} \Big(h(s,\xi)-\alpha(h(\tau_1(\xi),\xi)-l_0(\xi))-l_0(\xi)\Big)\\ \nn
& =h(\tau_1(\xi),\xi)-\alpha(h(\tau_1(\xi),\xi)-l_0(\xi))-l_0(\xi)\\ \nn
& =(1-\alpha)(h(\tau_1(\xi),\xi)-l_0(\xi))
\end{align}
and 
\begin{align}
 \bar h(\tau_1(\xi)-0,\xi)& =\lim_{s\uparrow \tau_1(\xi)}\bar h(s,\xi)\\ \nn
& =\lim_{s\uparrow \tau_1(\xi)} \lim_{n\to\infty} \bar h_n(s,\xi) \\ \nn 
& =\lim_{s\uparrow \tau_1(\xi)}\lim_{n\to \infty} (h_n(s,\xi)-l_0(\xi))\\ \nn 
& =\lim_{s\uparrow \tau_1(\xi)} h(s,\xi)-l_0(\xi)\\ \nn 
& =h(\tau_1(\xi),\xi)-l_0(\xi).
\end{align}
Thus 
\begin{equation}
 \bar h(\tau_1(\xi),\xi)=(1-\alpha)(h(\tau_1(\xi),\xi)-l_0(\xi))=(1-\alpha)\bar h(\tau_1(\xi)-0,\xi),
\end{equation}
and, in particular,
\begin{equation}
 \bar h(t,\xi)=\begin{cases}
                h(t,\xi)-l_0(\xi), & \quad \text{ for }t<\tau_1(\xi),\\
		h(t,\xi)-l_0(\xi)-l_1(\xi), & \quad \text{ otherwise},
               \end{cases}
\end{equation}
where $l_1(\xi)=\alpha(h(\tau_1(\xi),\xi)-l_0(\xi))=\lim_{n\to\infty}l_{1,n}(\xi)$.

It is left to prove that $U$ and $y$ are differentiable and that $U_\xi=w$ and $y_\xi=q$. Recall that $Q(\Theta)$ is defined via \eqref{eq:Qlag3} and choose $\xi_1$, $\xi_2\in\Real$ such that $\xi_1<\xi_2$, then we have 
\begin{align}\label{eq:lipschitz}
 & \int_{-\infty}^{\xi_2} e^{-\vert y(t,\xi_2)-y(t,\eta)\vert} (2U^2q+\bar h)(t,\eta)d\eta-\int_{-\infty}^{\xi_1}e^{-\vert y(t,\xi_1)-y(t,\eta)\vert}(2U^2q+\bar h)(t,\eta)d\eta\\ \nn
&\qquad=\int_{-\infty}^{\xi_1} (e^{-\vert y(t,\xi_2)-y(t,\eta)\vert}-e^{-\vert y(t,\xi_1)-y(t,\eta)\vert}) (2U^2q+\bar h)(t,\eta)d\eta\\ \nn 
&\quad\qquad +\int_{\xi_1}^{\xi_2}e^{-\vert y(t,\xi_2)-y(t,\eta) \vert }(2U^2q+\bar h)(t,\eta)d\eta\\ \nn 
& \qquad \leq C(\bar M)(\vert y(t,\xi_2)-y(t,\xi_1)\vert +\xi_2-\xi_1).
\end{align}
Here we used that 
\begin{align}
 \vert e^{-\vert y(t,\xi_2)-y(t,\eta)\vert}-e^{-\vert y(t,\xi_1)-y(t,\eta)\vert}\vert
& =\vert \int_{-\vert y(t,\xi_1)-y(t,\eta)}^{-\vert y(t,\xi_2)-y(t,\eta)\vert} e^x dx\vert \\ \nn
& \leq \vert \vert y(t,\xi_1)-y(t,\eta)\vert -\vert y(t,\xi_2)-y(t,\eta)\vert \vert \\ \nn
& \leq \vert y(t,\xi_1)-y(t,\xi_2)\vert.
\end{align}
Thus \eqref{eq:lipschitz} implies that $Q(\Theta)$ is differentiable almost everywhere according to Rademacher's theorem if $y(t,\xi)$ is Lipschitz continuous, 
since 
\begin{equation}\label{est:QQQ}
 Q(\Theta)(t,\xi_2)-Q(\Theta)(t,\xi_1)\leq C(\bar M)(\vert y(t,\xi_2)-y(t,\xi_1)\vert +\xi_2-\xi_1).
\end{equation}
Therefore observe that 
\begin{equation}
 y(t,\xi_2)-y(t,\xi_1)=\int_0^t \big(U(s,\xi_2)-U(s,\xi_1)\big)ds+y(0,\xi_2)-y(0,\xi_1)
\end{equation}
and 
\begin{equation}\label{eq:UUU}
 U(t,\xi_2)-U(t,\xi_1)=-\int_0^t \big(Q(\Theta)(s,\xi_2)-Q(\Theta)(s,\xi_1)\big)ds+U(0,\xi_2)-U(0,\xi_1),
\end{equation}
where $U(0,\xi)$ and $y(0,\xi)$ are Lipschitz continuous due to the assumptions on the initial data. 
Combining these two inequalities and \eqref{est:QQQ} yields
\begin{align}
 \vert  y(t,\xi_2)-y(t,\xi_1)\vert & \leq \int_0^t \vert U(s,\xi_2)-U(s,\xi_1)\vert ds+\vert y(0,\xi_2)-y(0,\xi_1)\vert \\ \nn 
& \leq\int_0^t\int_0^s \vert Q(\Theta)(r,\xi_2)-Q(\Theta)(r,\xi_1)\vert dr ds\\ \nn 
& \quad +\int_0^t \vert U(0,\xi_2)-U(0,\xi_1) \vert ds+\vert y(0,\xi_2)-y(0,\xi_1)\vert\\ \nn 
& \leq C(\bar M)(\int_0^t \vert y(s,\xi_2)-y(s,\xi_1)\vert ds +\xi_2-\xi_1).
\end{align}
Applying Gronwall's inequality yields 
\begin{equation}
 \vert y(t,\xi_2)-y(t,\xi_1)\vert \leq C(\bar M)\vert \xi_2-\xi_1\vert.
\end{equation}
Thus $y(t,\xi)$ is Lipschitz continuous and differentiable almost everywhere. As an immediate consequence, we get from \eqref{est:QQQ} and \eqref{eq:UUU} that also $Q$ and $U$ are Lipschitz continuous and therefore differentiable almost everywhere.

We are now ready to show that $w=U_\xi$ and $q=y_\xi$. Therefore recall that $Q(\Theta)$ is defined via
  \eqref{eq:Qlag3}, and note that $Q(\Theta)$ is differentiable since $y$ is differentiable. A direct
  computation gives us that
  \begin{equation}  \label{eq:derQxi}
    Q_\xi(\Theta) =-\frac12 \bar h-U^2q+P(\Theta)y_\xi.
  \end{equation}
 In addition, as $q(t,\xi)$ and $w(t,\xi)$ are both continuous with respect to time,
  we have 
  \begin{subequations}
    \label{eq:qQxi}
    \begin{align}
      (q-y_\xi)_t&=(w-U_\xi),\\
      (w-U_\xi)_t&=-P(\Theta)(q-y_\xi). 
    \end{align}
  \end{subequations}
  In particular, this means that  if $q_0=y_{0,\xi}$ and $w_0=U_{0,\xi}$, then
  \begin{multline*}
    \norm{(q-y_\xi)(t,\dott)}_{E}+\norm{(w-U_\xi)(t,\dott)}_{E}\\
    \leq C(\bar M) \int_0^t (\norm{(q-y_\xi)(t',\dott)}_{E}+\norm{(w-U_\xi)(t',\dott)}_{E})dt',
  \end{multline*}
  and thus using Gronwall's inequality yields that $y_\xi=q$ and $U_\xi=w$.

  Let us
  prove that $X(t)\in\G$ for all $t$. From
  \eqref{eq:G1} and \eqref{eq:G2} we get
  $q(t,\xi)\geq0$, $h(t,\xi)\geq0$ and
  $q\bar h=w^2+r^2$ for all $t$ and almost all $\xi$, 
  and therefore, since $U_\xi=w$ and $y_\xi=q$,
  conditions \eqref{eq:lagcoord3} and
  \eqref{eq:lagcoord6} are fulfilled and $y$ is an increasing function. Since
  $\zeta(t,\xi)=\zeta(0,\xi)+\int_0^t
  U(t,\xi)\,dt$, we obtain by the Lebesgue dominated
  convergence theorem that 
  $\lim_{\xi\to-\infty}\zeta(t,\xi)=0$ because
  $U\in H^1(\Real)$. Hence, since in addition
  $X(t)\in B_{\bar M}$, the function $X(t)$ fulfills all the
  conditions listed in \eqref{eq:lagcoord}, and thus $X(t)\in\G$. 
\end{proof}

  Note that the set $\G\cap B_M$ is closed with respect to the topology of $\bar V$. We have
  \begin{align*}
    y_{\xi,t}&=U_{\xi}, \\
    h_t&=2(U^2-P(\Theta))U_\xi,\\
    r_t&=0,
  \end{align*}
  for all $\xi\in\Real$ and $t\in\Real_+$.  In particular, this means that $y_\xi$, $h$, and $r$ are
  differentiable with respect to time in the classical sense almost everywhere.

In order to obtain global solutions, we want to apply  Theorem~\ref{th:short0} iteratively, which is
possible if we can show that $\norm{X}_{\bar V}+\norm{h}_{L^1}+\norm{\frac{1}{y_\xi+h}}_{L^\infty}$ does not blow up within finite time. The
corresponding estimate is contained in the following lemma.

\begin{lemma}
 \label{lem:globest}
Given $M$ and $T_0$, then there exists a constant $M_0$ which only depends on $M$ and $T_0$ such that, for any $\Theta_0=(y_0,U_0,y_{0,\xi},U_{0,\xi},\bar h_0, h_0, r_0)\in B_{M}$, the following holds for all $t\in[0,T_0]$,
\begin{equation}
\norm{X(t)}_{\bar V}+\norm{h(t)}_{L^1}+\norm{\frac{1}{y_\xi+h}(t)}_{L^\infty}\leq M_0
\end{equation}
and 
\begin{equation}
 \int_\Real \big(U^2y_\xi(t,\xi)+h(t,\xi)\big)d\xi=\int_\Real \big(U_0^2y_{0,\xi}(\xi)+h_0(\xi)\big)d\xi.
\end{equation}
\end{lemma}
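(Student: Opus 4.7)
My plan is to establish a conservation identity that immediately yields the second display, to derive pointwise bounds on $U$, $P$, and $Q$ from it, and then to close the remaining estimates on $\norm{X}_{\bar V}$ and $\norm{(y_\xi+h)^{-1}}_{L^\infty}$ by Gronwall arguments analogous to those already used in Lemmas \ref{lem:G} and \ref{lem:estshort}.

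First, I would show that $E(t):=\int_\Real (U^2 y_\xi + h)(t,\xi)\,d\xi$ is independent of $t$. Using the ODEs \eqref{eq:sysdiss} together with the composition identity $P_\xi(t,\xi)=Q(t,\xi)\,y_\xi(t,\xi)$ (which follows from $P(t,\xi)=P(t,y(t,\xi))$ and $P_x=-Q$ in Eulerian variables), a direct calculation gives
\begin{equation*}
\frac{dE}{dt}=\int_\Real\bigl(-2UQy_\xi+(U^3)_\xi-2PU_\xi\bigr)\,d\xi.
\end{equation*}
Integration by parts, valid since $U(t,\cdot)\in H^1(\Real)$ so that $U$ (and hence $PU$ and $U^3$) vanishes at $\pm\infty$, yields $-\int 2PU_\xi\,d\xi=2\int P_\xi U\,d\xi=2\int QUy_\xi\,d\xi$, cancelling the first term. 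Since $h$, $U$, $y_\xi$ are continuous in $t$ across wave breaking (only $\bar h$ jumps, cf.~\eqref{eq:defderatjump}), $E(t)$ is Lipschitz in $t$; combined with $E'=0$ a.e.\ this gives $E(t)\equiv E(0)$, which is the second displayed identity.

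From this identity, $\norm{h(t)}_{L^1}\leq E(0)$ is immediate. Conditions \eqref{eq:lagcoord6}, \eqref{eq:lagcoord7} give $U_\xi^2\leq y_\xi\bar h\leq y_\xi h$, so $\abs{UU_\xi}\leq\tfrac12(U^2 y_\xi + h)$; integrating $U^2(\xi)=2\int_{-\infty}^\xi UU_\xi\,d\eta$ (using $U\to0$ at $-\infty$) yields $\norm{U(t)}_{L^\infty}^2\leq E(0)$. Plugging these and $\bar h\leq h$ into \eqref{eq:Plag1}, \eqref{eq:Qlag1} with $e^{-\abs{\cdot}}\leq 1$ gives $\norm{P(t)}_{L^\infty}$ and $\norm{Q(t)}_{L^\infty}$ bounded by a multiple of $E(0)$. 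Since $\zeta_t=U$, one has $\norm{\zeta(t)}_{L^\infty}\leq\norm{\zeta_0}_{L^\infty}+T_0 E(0)^{1/2}$, bounded on $[0,T_0]$; then the elementary kernel estimate $\abs{y(\xi)-y(\eta)}\geq\abs{\xi-\eta}-2\norm{\zeta}_{L^\infty}$ shows $\int e^{-\abs{y(\xi)-y(\eta)}}\,d\eta$ is uniformly bounded, so Fubini gives $\norm{P}_{L^1},\norm{Q}_{L^1}\leq C(M,T_0)$, and hence $\norm{P}_{L^2},\norm{Q}_{L^2}\leq\sqrt{\norm{\cdot}_{L^\infty}\norm{\cdot}_{L^1}}$ are bounded.

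Feeding the $P$, $Q$ bounds into $U_t=-Q$, $\zeta_t=U$, $q_t=w$, $w_t=\tfrac12\bar h+(U^2-P)q$, $h_t=2(U^2-P)w$, $r_t=0$ and applying Gronwall closes a uniform bound $\norm{X(t)}_{\bar V}\leq C(M,T_0)$. For the last piece $\norm{(y_\xi+h)^{-1}}_{L^\infty}$ I would copy the argument of Lemma \ref{lem:G}(ii): with $Z=(q,w,h,r)$, the ODE system gives $\tfrac{d}{dt}\abs{Z}_2^{-2}\leq C\abs{Z}_2^{-2}$ between breaking times, continuity of $X$ at $\tau_n(\xi)$ via \eqref{eq:defderatjump} permits Gronwall to run over all of $[0,T_0]$, and $w^2+r^2=q\bar h\leq qh$ gives $(q+h)^2\geq\abs{Z}_2^2$, so $(q+h)^{-1}(t,\xi)\leq e^{CT_0/2}(q_0+h_0)^{-1}(\xi)$. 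The main obstacle is obtaining usable $L^2$ control on $P$ and $Q$ in Lagrangian coordinates despite the degeneracy of $\xi\mapsto y(\xi)$ on the wave-breaking set; the saving observation is that $\zeta=y-\id$ remains $L^\infty$-bounded, making the kernel $e^{-\abs{y(\xi)-y(\eta)}}$ essentially translation-invariant after a bounded shift.
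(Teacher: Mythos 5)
Your proposal is correct and follows essentially the same route as the paper: conservation of $\Sigma(t)=\int_\Real(U^2y_\xi+h)\,d\xi$, the resulting $L^\infty$ bounds on $U$, $P$, $Q$ (using $U_\xi^2\le y_\xi h$ on $\{y_\xi>0\}$ and the kernel bound via $\norm{\zeta}_{L^\infty}$), a Gronwall closure for $\norm{X}_{\bar V}$, and the Lemma \ref{lem:G}(\textit{ii})-type estimate for $\norm{(y_\xi+h)^{-1}}_{L^\infty}$, which the paper simply cites as \eqref{eq:G3}. The only cosmetic differences are that you obtain the $L^2$ bounds on $P$, $Q$ by $L^1$--$L^\infty$ interpolation rather than Young's inequality and you spell out the ``after some computation'' cancellation via $P_\xi=Qy_\xi$.
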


\begin{proof} This proof follows the same lines as the one in \cite{HolRay:07}. To simplify the
notation we will generically denote by $C$ constants and by $C(M,T_0)$ constants which in addition
depend on $M$ and $T_0$. Let us introduce
  \begin{equation*}
    \Sigma(t)=\int_\Real (U^2y_\xi+h)(t,\xi)d\xi.
  \end{equation*}
  Since $h\geq 0$, we have $\norm{h}_{L^1_\Real}=\int_\Real
  h\,d\xi<\infty$. After some computation, \eqref{eq:sysdiss} yields that
  \begin{equation}\label{eq:Sigma}
    \Sigma(t)=\Sigma(0) \text{ for all }t\in\Real_+,
  \end{equation}
which implies 
\begin{equation}
 \norm{h(t,\dott)}_{L^1}\leq \Sigma(0).
\end{equation}
  Moreover we have  
  \begin{subequations}
    \label{eq:derivULinf}
    \begin{align*}
      U^2(t,\xi)&=2\int_{-\infty}^{\xi}UU_\xi(t,\eta)\,d\eta\\
      &\leq \int_{\{\eta\mid  y_\xi(\eta)>0\}}\Big(
      U^2y_\xi+\frac{U_\xi^2}{y_\xi}\Big)(t,\eta)\,d\eta\\
      &\leq \int_{\{\eta\mid  y_\xi(\eta)>0\}}(U^2y_\xi+h)(t,\eta)\,d\eta\\
      &\leq \Sigma(t)=\Sigma(0),
    \end{align*}
  \end{subequations}
  where we used that $y_\xi(\eta)=0$ implies $U_\xi(\eta)=0$, and therefore the integrand in the
  integral in the first line vanishes whenever $y_\xi(\xi)=0$. Thus it suffices to integrate
  over $\{\eta\in\Real\mid y_\xi(\eta)>0\} \cap \{\eta\leq\xi\}$ which justifies the subsequent
  estimate. Thus
  \begin{equation}\label{eq:2951}
    \norm{U(t,\dott)}_{L^\infty}^2\leq\Sigma(0).
  \end{equation}
  Moreover, $P$ and $Q$ satisfy
  \begin{equation}\label{eq:2952}
    \norm{P(\Theta)(t,\dott)}_{L^\infty}\leq 2\Sigma(0)\quad \text{ and }\quad \norm{Q(\Theta)(t,\dott)}_{L^\infty}\leq 2\Sigma(0).
  \end{equation}
  From \eqref{eq:sysdiss}, we obtain that 
  \begin{equation}
    \vert \zeta(t,\xi)\vert \leq \vert \zeta(0,\xi)\vert +\int_0^t\abs{U(t',\xi)}dt', 
  \end{equation}
  and hence 
  \begin{equation}
    \norm{\zeta(t,\dott)}_{L^\infty}\leq\norm{\zeta(0,\dott)}_{L^\infty}+T(1+\Sigma(0)).
  \end{equation}
  Applying Young's inequality to \eqref{eq:Plag1} and \eqref{eq:Qlag1} and following the proof of
  Lemma~\ref{lem:PQ} we get
  \begin{equation}
    \norm{P(\Theta)(t,\dott)}_{L^2}+\norm{Q(\Theta)(t,\dott)}_{L^2}\leq
    Ce^{2\norm{\zeta(t,\dott)}_{L^\infty}}\Sigma(0).
  \end{equation}
  Let 
  \begin{equation*}
    \alpha(t)=\norm{U(t,\dott)}_{E}+\norm{\zeta_\xi(t,\dott)}_{E}+\norm{U_\xi(t,\dott)}_{E}+\norm{h(t,\dott)}_{E}+\norm{r(t,\dott)}_{E}.
  \end{equation*}
  Then
  \begin{equation}
    \alpha(t)\leq \alpha(0)+C(M,T_0)+C(M,T_0)\int_0^t \alpha(t') dt'.
  \end{equation}
  Hence Gronwall's lemma gives us $\alpha(t)\leq C(M,T_0)$. It remains to prove that
  $\norm{\frac{1}{y_\xi+h}}_{L^\infty_TL^\infty}$ can be bounded by some constant depending on $M$
  and $T_0$, but this follows immediately form \eqref{eq:G3}. This completes the proof.
\end{proof}

We can now prove  global existence of solutions.
\begin{theorem}[Global solution] 
  \label{th:global}
  For any initial data $\Theta_0=(y_0,U_0,y_{0,\xi}, U_{0,\xi}, h_0, \bar h_0, r_0)\in\G$, there exists a unique global
  solution $\Theta=(y,U,y_{\xi}, U_{\xi},\bar h,h,r)\in C(\Real_+,\G)$ of \eqref{eq:sysdiss} with $\Theta(0)=\Theta_0$.
\end{theorem}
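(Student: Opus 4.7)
The plan is a standard continuation argument: combine the short-time existence result of Theorem~\ref{th:short0} with the a priori bounds of Lemma~\ref{lem:globest} to rule out finite-time blow-up, and then iterate.

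Fix an arbitrary $T_0>0$. Since $\Theta_0\in\G$, there exists $M>0$ such that $\Theta_0\in\G\cap B_M$. By Lemma~\ref{lem:globest}, there exists a constant $M_0=M_0(M,T_0)$ such that \emph{any} solution in $\G$ issued from an initial datum in $B_M$ satisfies the bound
\begin{equation*}
\norm{X(t)}_{\bar V}+\norm{h(t)}_{L^1}+\bnorm{\tfrac{1}{y_\xi+h}(t)}_{L^\infty}\leq M_0
\end{equation*}
for every $t\in[0,T_0]$ in its interval of existence. In particular $\Theta(t)\in\G\cap B_{M_0}$ whenever it is defined on $[0,T_0]$. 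By Theorem~\ref{th:short0} applied with data in $\G\cap B_{M_0}$, there is a time step $\delta=\delta(M_0)>0$, depending only on $M_0$, on which the Cauchy problem can be solved for any initial datum in $\G\cap B_{M_0}$.

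Now define $T^*=\sup\{t\in[0,T_0]\mid\text{a solution in $C([0,t],\G)$ of \eqref{eq:sysdiss} with $\Theta(0)=\Theta_0$ exists}\}$. Theorem~\ref{th:short0} ensures $T^*>0$. Suppose for contradiction that $T^*<T_0$. Since $\Theta(t)\in\G\cap B_{M_0}$ for all $t<T^*$, the limit $\Theta(T^*)=\lim_{t\uparrow T^*}\Theta(t)$ exists in $\bar V$ (because $X_t$ is bounded in the relevant norms along the solution by the estimates underlying Lemma~\ref{lem:globest}) and lies in the closed set $\G\cap B_{M_0}$. Applying Theorem~\ref{th:short0} with initial datum $\Theta(T^*)\in\G\cap B_{M_0}$ produces a solution on $[T^*,T^*+\delta]$; concatenating with the solution on $[0,T^*]$ (using \eqref{eq:defderatjump}--\eqref{eq:defderatjumpA} to handle the possible breaking at $t=T^*$, which is harmless since $\Theta\in\G$ encodes the jump) yields a solution on $[0,T^*+\delta]\cap[0,T_0]$, contradicting the definition of $T^*$. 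Hence $T^*=T_0$, and since $T_0$ was arbitrary, the solution extends to all of $\Real_+$.

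Uniqueness follows from the contraction estimates established in Lemma~\ref{lem:contrgamma} and used in the fixed-point argument of Theorem~\ref{th:short0}: on each subinterval $[kT,(k+1)T]$ with $T$ sufficiently small depending on the current bound $M_0$, two solutions of \eqref{eq:sysdiss} with the same initial data must coincide by the quasi-contraction property \eqref{eq:contracP} combined with \eqref{eq:contracGamma}, and this propagates globally by standard open-closed continuation. The main technical subtlety is ensuring that the limit $\Theta(T^*)$ inherits the full set of constraints defining $\G$, in particular $\frac{1}{y_\xi+h}\in L^\infty$ and $y_\xi\bar h=U_\xi^2+r^2$ almost everywhere; the former is preserved by the uniform bound from Lemma~\ref{lem:globest}, while the latter is preserved pointwise in time along the flow by Lemma~\ref{lem:G}(\textit{i}).
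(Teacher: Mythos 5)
Your proposal is correct and follows essentially the same route as the paper: short-time existence from Theorem~\ref{th:short0} combined with the a priori bound of Lemma~\ref{lem:globest} to preclude blow-up of $\norm{X(t)}_{\bar V}+\norm{h(t)}_{L^1}+\norm{\frac{1}{y_\xi+h}(t)}_{L^\infty}$, and then continuation (the paper simply iterates the local theorem with a uniform time step rather than running your $T^*$-contradiction argument, but the substance is identical).
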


\begin{proof}
By assumption $\Theta_0\in\G$, and therefore there exists a constant $M$ such that $\Theta_0\in B_{M}$. By Theorem~\ref{th:short0} there exists a $T>0$, dependent on $M$, such that we can find a unique solution
  $\Theta(t)\in \G$ on $[0,T]$. Thus we can find a global solution to \eqref{eq:sysdiss} if and only if $\norm{X(t)}_{\bar V}+\norm{h(t)}_{L^1}+\norm{\frac{1}{y_\xi+h}(t)}_{L^\infty}$ does not blow up within a
  finite time interval, but this follows from Lemma~\ref{lem:globest}.
\end{proof}

Observe that
$(\zeta,U,\zeta_\xi,U_\xi,\bar h, h,r)$ is a fixed point of
$\PP$, and the results of Lemma~\ref{lem:G}
hold for $\Theta=\tilde \Theta=(\zeta,U,\zeta_\xi,U_\xi,\bar h,h,r)$. 
Since this lemma contains important information about which points will experience wave breaking in the near future, we rewrite it for the fixed point solution $\Theta$. For
this purpose, we redefine $B_M$ and
$\kappa_{1-\gamma}$, see \eqref{eq:defBMfix} and \eqref{eq:defKgamma}, as
\begin{equation*}
  B_M=\{ \Theta \mid \norm{X}_{\bar V}+\norm{h}_{L^1}+\norm{\frac{1}{y_\xi+h}}_{L^\infty}\leq M\},
\end{equation*}
where $X=(\zeta,U,\zeta_\xi,U_\xi,h,r)$, and
\begin{equation}
  \label{eq:defKgamma2}
  \kappa_{1-\gamma}=\{\xi\in\Real\mid \frac{\bar h_0}{y_{0,\xi}+\bar h_0}(\xi)\geq 1-\gamma\text{, } U_{0,\xi}(\xi)\leq 0, \text{ and } r_0(\xi)=0\}, \quad \gamma\in[0,\frac12].
\end{equation}
Note that every condition imposed on points $\xi\in \kappa_{1-\gamma}$ is motivated by what is known about wave breaking. If wave breaking occurs at some time $t_b$, then energy is concentrated on sets of measure zero in Eulerian coordinates, which correspond to the sets where $\frac{h}{y_\xi+h}(t_b,\xi)=1=\frac{\bar h}{y_\xi+\bar h}(t_b-0,\xi)$ in Lagrangian coordinates. Furthermore, it is well-known that wave breaking in the context of the 2CH system means that the spatial derivative becomes unbounded from below and hence $U_\xi(t,\xi)\leq 0$ for $t_b-\delta\leq t\leq t_b$ for such  points, see \cite{ConstantinIvanov:2008, GY}. Finally, it has been shown in \cite[Theorem 6.1]{GHR4} that wave breaking within finite time can only occur at points 
$\xi$ where $r_0(\xi)=0$.

Lemma \ref{lem:G} 
rewrites, due to \eqref{eq:2951} and \eqref{eq:2952}, as follows.

\begin{corollary}\label{lem:G2}
  Let  $M_0$ be a constant, and consider initial data $\Theta_0\in \mathcal{G}\cap B_{M}$. Denote by $\Theta=(\zeta, U, \zeta_\xi, U_\xi,\bar h,h,r)\in C(\Real_+,\mathcal{G})$ the global solution of \eqref{eq:sysdiss} with initial data $\Theta_0$.  
  Then the following statements hold:

  (\textit{i}) We have 
  \begin{equation}\label{eq:G3sol}
    \norm{\frac{1}{y_\xi+h}(t,\dott)}_{L^\infty}\leq 2e^{C(M)T}\norm{\frac{1}{y_{0,\xi}+h_0}}_{L^\infty},
  \end{equation}
  and 
  \begin{equation}
    \label{eq:G3bsol}
    \norm{(y_\xi+h)(t,\dott)}_{L^\infty}\leq 2e^{C(M)T}\norm{y_{0,\xi}+h_0}_{L^\infty}
  \end{equation}
  for all $t\in[0,T]$ and a constant $C(M)$ which depends on $M$. 

  (\textit{ii}) There exists a $\gamma\in(0,\frac12)$ depending only on $M$ such that if $\xi\in \kappa_{1-\gamma}$, then $\Theta(t,\xi)\in \Omega_1$ for all $t\in [0,\min(\tau_1(\xi),T)]$, $\frac{y_\xi}{y_\xi+\bar h}(t,\xi)$ is a decreasing function and $\frac{U_\xi}{y_\xi+\bar h}(t,\xi)$ is an increasing function, both with respect to time for $t\in[0,\min(\tau_1(\xi),T)]$.  Therefore we have 
  \begin{equation}
    \frac{U_{0,\xi}}{y_{0,\xi}+\bar h_0}(\xi)\leq\frac{U_\xi}{y_\xi+\bar h}(t,\xi)\leq 0\quad \text{and} \quad 0\leq \frac{y_\xi}{y_\xi+\bar h}(t,\xi)\leq \frac{y_{0,\xi}}{y_{0,\xi}+\bar h_0}(\xi),
  \end{equation}
  for $t\in[0,\min(\tau_1(\xi),T)]$.
  In addition, for $\gamma$ sufficiently small, depending only on $M$ and $T$, we have 
  \begin{equation}\label{eq:G5sol}
    \kappa_{1-\gamma}\subset \{\xi\in\Real\mid 0\leq \tau_1(\xi)<T\}.
  \end{equation}

  (\textit{iii})
  Moreover, for any given $\gamma>0$, there exists $\hat T>0$ such that 
  \begin{equation}\label{eq:G4sol}
    \{\xi\in\Real\mid 0<\tau_1(\xi)<\hat T\}\subset \kappa_{1-\gamma}.
  \end{equation}
\end{corollary}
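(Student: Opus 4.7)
The plan is to deduce the corollary by specialising Lemma \ref{lem:G} to the global fixed point solution $\Theta$ produced by Theorem \ref{th:global}. Since $\Theta=\mathcal{P}(\Theta)$, both arguments of that lemma coincide; moreover, as established in the proof of Theorem \ref{th:short0}, $q=y_\xi$ and $w=U_\xi$, so parts (\textit{ii}), (\textit{iii}), (\textit{iv}) of Lemma \ref{lem:G} translate verbatim into the three assertions of the corollary. The only work is to re-express every dependence on $\bar M$ as a dependence on $M$ alone.

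The auxiliary constant appearing in Lemma \ref{lem:G} is
\begin{equation*}
\bar M=\norm{Q(\Theta)}_{L^\infty_TL^\infty}+\norm{P(\Theta)}_{L^\infty_TL^\infty}+\norm{U}_{L^\infty_TL^\infty}^2.
\end{equation*}
Along the fixed point solution, the proof of Lemma \ref{lem:globest} yields the pointwise bounds \eqref{eq:2951} and \eqref{eq:2952}, so that $\bar M\leq 5\Sigma(0)$ where $\Sigma(0)=\int_\Real(U_0^2y_{0,\xi}+h_0)\,d\xi$. For $\Theta_0\in B_M$ one has $\norm{h_0}_{L^1}\leq M$, and Cauchy--Schwarz combined with $U_0\in H^1\cap L^\infty$ and $y_{0,\xi}-1\in L^2$ gives $\int U_0^2y_{0,\xi}\,d\xi\leq C(M)$. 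Hence $\bar M\leq C(M)$.

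Substituting $\bar M\leq C(M)$ into \eqref{eq:G3}--\eqref{eq:G3b} produces \eqref{eq:G3sol}--\eqref{eq:G3bsol}, so (\textit{i}) follows from part (\textit{ii}) of Lemma \ref{lem:G}. Part (\textit{ii}) of the corollary follows from part (\textit{iii}) of Lemma \ref{lem:G}, with the thresholds $\gamma(\bar M)$ replaced by thresholds $\gamma(M)$; the monotonicity of $y_\xi/(y_\xi+\bar h)$ and $U_\xi/(y_\xi+\bar h)$ and the inclusion \eqref{eq:G5sol} (which requires $\gamma$ small depending on $M$ and $T$) carry over unchanged. Part (\textit{iii}) follows from part (\textit{iv}) of Lemma \ref{lem:G}, yielding $\hat T>0$ depending only on $M$ and $\gamma$. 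There is no genuine obstacle: the content is the fixed point specialisation of Lemma \ref{lem:G}, and the reduction $\bar M\to C(M)$ is immediate from the conservation identity \eqref{eq:Sigma} together with \eqref{eq:2951}--\eqref{eq:2952}.
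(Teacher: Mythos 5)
Your proposal is correct and follows the paper's own route: the paper likewise observes that the global solution is a fixed point of $\mathcal{P}$, so Lemma \ref{lem:G} applies with $\Theta=\tilde\Theta$ the solution itself, and the constant $\bar M$ is controlled via \eqref{eq:2951}--\eqref{eq:2952} and the conservation of $\Sigma$, hence by $C(M)$. Your reduction $\bar M\leq 5\Sigma(0)\leq C(M)$ is exactly the justification the paper gives (in one line) before restating the lemma as the corollary.
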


Although we have now constructed a new class of solutions in Lagrangian coordinates, there is one
more fact we want to point out. The construction of $\alpha$-dissipative solutions involves the
sequence of breaking times $\{\tau_j(\xi)\}$ for every point $\xi$. At first sight it is not clear
that this possibly infinite sequence does not accumulate.

\begin{corollary}\label{cor:tau}
  Denote by $\Theta(t)=(y,U,y_\xi, U_\xi,\bar h,h,r)(t)$ the global solution of \eqref{eq:sysdiss} with $\Theta(0)=\Theta_0\in\G\cap B_M$ in $C(\Real_+,\G)$.  For any $\xi\in\Real$ the possibly infinite sequence
  $\tau_j(\xi)$ cannot accumulate.

In particular, there exists a time $\hat T$ depending on $M$ such that any point $\xi$ can experience
  wave breaking at most once within the time interval $[T_0,T_0+\hat T]$ for any $T_0\ge0$. More
  precisely, given $\xi\in\Real$, we have
  \begin{equation}
  \tau_{j+1}(\xi)-\tau_j(\xi)>\hat T \text{ for all } j. 
  \end{equation}
  In addition,
  for $\hat T$ sufficiently small, we get that in this case $U_\xi(t,\xi)\geq 0$ for all
  $t\in[\tau_j(\xi), \tau_j(\xi)+\hat T]$.
\end{corollary}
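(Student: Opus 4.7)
The plan is that Corollary~\ref{cor:tau} is the fixed-point specialization of Lemma~\ref{lem:break}: the global solution from Theorem~\ref{th:global} satisfies $\Theta=\mathcal{P}(\Theta)$, so the argument of Lemma~\ref{lem:break} produces a uniform gap $\hat T=\hat T(M)$ between consecutive breaking times, which by itself precludes accumulation. The bounds on $\norm{U}_{L^\infty}$ and $\norm{P(\Theta)}_{L^\infty}$, previously inherited from a hypothesis that some auxiliary $\Theta$ belongs to $C([0,T],B_{\bar M})$, now follow from the a priori estimates of Lemma~\ref{lem:globest} together with \eqref{eq:2951}--\eqref{eq:2952}.

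First I would dispose of the trivial case $\alpha=1$: then $\bar h(\tau_1(\xi),\xi)=0$, and the constraint $y_\xi\bar h=U_\xi^2+r^2$ combined with the ODE for $U_\xi$ forces $y_\xi$, $U_\xi$ and $r$ to vanish on $[\tau_1(\xi),\infty)$, so there is at most one breaking time per $\xi$. For $\alpha\in[0,1)$, fix $\xi$ and $j$, set $\tau_j=\tau_j(\xi)$, and assume $\tau_{j+1}(\xi)<\infty$ (otherwise the inequality is vacuous). At $\tau_j$ we have $y_\xi=U_\xi=0$, and combining \eqref{eq:lagcoord6} with $r_t=0$ gives $r(t,\xi)\equiv0$. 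The pre-jump value $\bar h(\tau_j-0,\xi)$ must be strictly positive; otherwise $\bar h(\tau_j,\xi)=0$ and the identity $y_\xi\bar h=U_\xi^2$ keeps the triple $(y_\xi,U_\xi,\bar h)$ identically zero for $t\geq\tau_j$, contradicting $\tau_{j+1}(\xi)<\infty$. Hence $\bar h(\tau_j,\xi)=(1-\alpha)\bar h(\tau_j-0,\xi)>0$, and \eqref{eq:sysdiss} yields $U_{\xi,t}(\tau_j,\xi)=\tfrac12\bar h(\tau_j,\xi)>0$, so $U_\xi$ becomes strictly positive immediately after $\tau_j$.

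Next I would let $t^*>\tau_j$ denote the first subsequent zero of $U_\xi(\cdot,\xi)$. On $(\tau_j,t^*)$ the relation $y_{\xi,t}=U_\xi>0$ shows that $y_\xi$ is strictly increasing, so $y_\xi(t^*,\xi)>0$; the identity $y_\xi\bar h=U_\xi^2$ at $t^*$ then forces $\bar h(t^*,\xi)=0$. Therefore $\frac{y_\xi}{y_\xi+\bar h}$ travels from $0$ at $\tau_j$ to $1$ at $t^*$. A direct differentiation using \eqref{eq:sysdiss-4}--\eqref{eq:sysdiss-5} gives, on $(\tau_j,t^*)$,
\begin{equation*}
\Big(\frac{y_\xi}{y_\xi+\bar h}\Big)_t=\frac{U_\xi}{y_\xi+\bar h}\Big(\frac{\bar h}{y_\xi+\bar h}-2(U^2-P(\Theta))\frac{y_\xi}{y_\xi+\bar h}\Big),
\end{equation*}
and the bound $|U_\xi|\leq(y_\xi+\bar h)/2$ (from $y_\xi\bar h=U_\xi^2$ and AM--GM), combined with the global control $\norm{U}_{L^\infty}^2+\norm{P(\Theta)}_{L^\infty}\leq C(M)$, majorizes the right-hand side by a constant depending only on $M$. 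Integrating from $\tau_j$ to $t^*$ yields $t^*-\tau_j\geq\hat T$ for some $\hat T=\hat T(M)>0$. Since $y_\xi(t^*,\xi)>0$ while $y_\xi(\tau_{j+1}(\xi),\xi)=0$, we must have $\tau_{j+1}(\xi)>t^*\geq\tau_j+\hat T$, giving the uniform gap and hence the absence of accumulation. The assertion $U_\xi(t,\xi)\geq0$ on $[\tau_j,\tau_j+\hat T]$ is then immediate from $U_\xi\geq0$ on $[\tau_j,t^*]$, possibly after shrinking $\hat T$.

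The main obstacle is bookkeeping at the jump: only $\bar h$ is discontinuous at $\tau_j$, whereas $y_\xi$, $U_\xi$, $r$ and $h$ remain continuous, so the differentiation identity above must be applied only on the open interval $(\tau_j,t^*)$ with one-sided limits at $\tau_j$ given by the post-jump values. One also has to confirm that the constant appearing in the bound is genuinely uniform in $j$ and $\xi$; this follows from Lemma~\ref{lem:globest}, which furnishes $M$-dependent bounds independent of time.
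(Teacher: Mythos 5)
Your proposal is correct and follows essentially the same route as the paper: the paper's proof of Corollary~\ref{cor:tau} simply notes that the global solution is a fixed point of $\mathcal{P}$, that \eqref{eq:conen} (i.e., the bounds \eqref{eq:2951}--\eqref{eq:2952} from Lemma~\ref{lem:globest}) makes the constant in Lemma~\ref{lem:break} time-independent, and then invokes that lemma to get the uniform gap $\hat T$. Your detailed estimate on $\bigl(\frac{y_\xi}{y_\xi+\bar h}\bigr)_t$ is just the argument of Lemma~\ref{lem:break} rewritten for the fixed-point solution (with the AM--GM bound $\abs{U_\xi}\le\frac12(y_\xi+\bar h)$ in place of the paper's $1/\sqrt2$), so no new ideas are needed and none are missing.
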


\begin{proof}
  In the proof of Lemma~\ref{lem:globest}, we showed  
  \begin{equation}\label{eq:conen}
    \norm{P(\Theta)}_{L^{\infty}_{\infty} L^\infty}+\norm{Q(\Theta)}_{L^\infty_\infty L^\infty}+\norm{U}_{L^\infty_\infty L^\infty}^2\leq 5\Sigma(0),
  \end{equation}
  where $L^{\infty}_{\infty}=L^{\infty}_{T=\infty}$.
  This means, in particular, that the constant $C(\bar M)$ in the proof of Lemma~\ref{lem:break}, for the global solution, can be chosen to be independent of time. Thus we can conclude from Lemma~\ref{lem:break}, that there exists a constant $\hat T$, such that $\tau_{j+1}(\xi)-\tau_j(\xi)> \hat T$ for all $j$.
\end{proof}

\section{From Eulerian to Lagrangian variables and vice versa}

Let us define in detail our variables in Eulerian coordinates. As explained in the introduction, the energy distribution can concentrate and therefore our set of Eulerian variables does not only contain  the functions $u(t)$ and $\rho(t)$ but also a measure $\mu(t)$, which properly describes the concentrated amount of energy at breaking times. This measure $\mu(t)$, which describes only part of the energy in general, is treated as an independent variable, but still remains strongly connected to $u(t)$ and $\rho(t)$ through its absolutely continuous part, see \eqref{eq:abspart} below. In addition, in order to enable the construction of the semigroup, we add to the set of Eulerian variables the measure $\nu(t)$, which allows us, together with $\mu(t)$, to determine how much energy has been dissipated. For the solution we construct, see Section \ref{sec:semigroup}, the measure $\mu(t)$
is in general discontinuous in time while $\nu(t)$ remains continuous.

\begin{definition} [Eulerian coordinates]\label{def:D}
 The set $\D$ is
  composed of all $(u,\rho,\mu,\nu)$ such that 
  \begin{enumerate}
  \item $u\in H^1(\Real)$,
  \item $\rho\in L^2(\Real)$,
  \item $\mu$ is a positive finite
    Radon measure whose absolutely 
    continuous part,  $\muac$,  satisfies 
    \begin{equation}\label{eq:abspart}
      \muac=(u_x^2+\rho^2)\,dx,
    \end{equation}
  \item $\nu$ is a positive finite Radon measure such that $\mu\leq \nu$.
  \end{enumerate}
\end{definition}
Note that $\mu\leq \nu$ implies that $\mu$ is absolutely continuous with respect to $\nu$ and
therefore there exists a measureable function $f$ such that
\begin{equation}\label{eq:radon}
  \mu = f\nu\quad \text{ and }\quad 0\leq f\leq 1.
\end{equation}

\begin{remark}
 At first sight it might seem surprising that we need two measures to be able to construct a semigroup of solutions, but both of them play an essential role. 

The measure $\mu$, on the one hand, describes the concentrated amount of energy at breaking times, and is therefore, in general, discontinuous with respect to time. Moreover, it helps to measure the total energy $E(t)$ at any time, since
\begin{equation}
 E(t)=\int_\Real u^2(t,x)dx+\mu(t,\Real).
\end{equation}
Thus also the energy is in general a discontinuous function, and, in particular, drops suddenly at breaking times if $\alpha\not =0$, while it is preserved for all times in the conservative case. 

The measure $\nu$, on the other hand, is continuous with respect to time, and plays a key role when identifying equivalence classes. Moreover, it enables us to determine how much energy has dissipated from the system up to a certain time, since 
\begin{equation}
 \int_\Real u^2(t,x) dx+\nu(t,\Real)
\end{equation}
is independent of time. 

For conservative solutions no energy vanishes from the system, and therefore it is natural to impose that $\mu=\nu$. In the case of dissipative solutions all the energy that concentrates at isolated points where wave breaking takes place, vanishes from the system. The measure $\mu$, which corresponds to the energy, is purely absolutely continuous, while $\nu-\mu$ describes how much energy we already lost. If $\alpha\in (0,1]$, we can initially choose  the two measures to be equal, $\nu_0=\mu_0$, but as soon as wave breaking takes place, they will differ. In particular, $\nu$ does not coincide with the measure $\mu_{\rm cons}$ for conservative solutions. 

\end{remark}

\begin{definition}[Relabeling functions]\label{def:Grelab}
  We denote by $G$ the subgroup of the group of homeomorphisms from $\Real$ to $\Real$ such that 
  \begin{subequations}
    \label{eq:Gcond}
    \begin{align}
      \label{eq:Gcond1}
      f-\id \text{ and } f^{-1}-\id &\text{ both belong to } W^{1,\infty}(\Real), \\
      \label{eq:Gcond2}
      f_\xi-1 &\text{ belongs to } L^2(\Real),
    \end{align}
  \end{subequations}
  where $\id$ denotes the identity function. Given $\kappa>0$, we denote by $G_\kappa$ the subset $G_\kappa$ of $G$ defined by 
  \begin{equation}
    G_\kappa=\{ f\in G\mid  \norm{f-\id}_{W^{1,\infty}}+\norm{f^{-1}-\id}_{W^{1,\infty}}\leq\kappa\}. 
  \end{equation}
\end{definition}

\begin{definition}[Lagrangian coordinates]
  The subsets $\F$ and $\F_\kappa$ of $\G$ are defined as
  \begin{equation*}
    \mathcal{F}_\kappa=\{\Theta=(y,U,y_\xi, U_\xi, \bar h,h,r)\in\mathcal{G}\mid  y+H\in G_\kappa\},
  \end{equation*}
  and
  \begin{equation*}
    \mathcal{F}=\{\Theta=(y,U,y_\xi, U_\xi,\bar h,h,r)\in\mathcal{G}\mid  y+H\in G\},
  \end{equation*}
  where $H(t,\xi)$ is defined by 
  \begin{equation*}
    H(t,\xi)=\int_{-\infty}^\xi h(t,\tilde\xi)d\tilde\xi.
  \end{equation*}
\end{definition}

In addition, it should be pointed out that the condition on $y+H$ is closely linked to
$\norm{\frac{1}{y_\xi+h}}_{L^\infty}$ as the following lemma shows.

\begin{lemma}[{\cite[Lemma 3.2]{HolRay:07}}] 
  \label{lem:charH}
  Let $\kappa\geq0$. If $f$ belongs to $G_\kappa$,
  then $1/(1+\kappa)\leq f_\xi\leq 1+\kappa$ almost
  everywhere. Conversely, if $f$ is absolutely
  continuous, $f-\id\in W^{1,\infty}(\Real)$, $f$ satisfies
  \eqref{eq:Gcond2} and there exists $d\geq 1$ such
  that $1/d\leq f_\xi\leq d$ almost everywhere, then
  $f\in G_\kappa$ for some $\kappa$ depending only
  on $d$ and $\norm{f-\id}_{W^{1,\infty}}$.
\end{lemma}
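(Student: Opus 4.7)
The plan is to prove the two implications separately, both of which rest on the chain rule for absolutely continuous inverses. For the forward direction, assume $f\in G_\kappa$, so both $f-\id$ and $f^{-1}-\id$ lie in $W^{1,\infty}(\Real)$ with their $W^{1,\infty}$-norm sum bounded by $\kappa$. The upper bound $f_\xi\leq 1+\kappa$ a.e.\ is immediate from $\norm{f_\xi-1}_{L^\infty}\leq\norm{f-\id}_{W^{1,\infty}}\leq \kappa$. For the lower bound $f_\xi\geq 1/(1+\kappa)$, I would differentiate the identity $f^{-1}\circ f=\id$: since $f$ is AC with $f-\id\in W^{1,\infty}$ and $f^{-1}$ is Lipschitz (being in $\id+W^{1,\infty}$), the chain rule yields $(f^{-1})_x(f(\xi))f_\xi(\xi)=1$ for a.e.\ $\xi$, and combining with $(f^{-1})_x\leq 1+\kappa$ a.e.\ gives the claim.

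For the converse, assume $f$ is AC with $f-\id\in W^{1,\infty}(\Real)$, $f_\xi-1\in L^2(\Real)$, and $1/d\leq f_\xi\leq d$ a.e. I first need that $f$ is a homeomorphism: the bound $f_\xi\geq 1/d>0$ a.e.\ together with absolute continuity forces $f$ to be strictly increasing, and since $f-\id$ is bounded we have $\lim_{\xi\to\pm\infty}f(\xi)=\pm\infty$, so $f$ is a bijection and thus a homeomorphism. Being strictly increasing with $f_\xi\geq 1/d$, it is actually bi-Lipschitz, with $f^{-1}$ Lipschitz (constant $\leq d$), hence absolutely continuous.

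Next I would verify the four defining conditions of $G_\kappa$. The bound $\norm{f-\id}_{W^{1,\infty}}$ and condition \eqref{eq:Gcond2} are given. For $f^{-1}-\id$, the $L^\infty$ bound follows from $x=f(\xi)\Rightarrow f^{-1}(x)-x=-(f(\xi)-\xi)$, so $\norm{f^{-1}-\id}_{L^\infty}\leq\norm{f-\id}_{L^\infty}$; the $L^\infty$ bound on $(f^{-1})_x-1$ follows from the a.e.\ identity $(f^{-1})_x(x)=1/f_\xi(f^{-1}(x))$ together with $1/d\leq f_\xi\leq d$, yielding $(f^{-1})_x\in[1/d,d]$ a.e. Finally, to obtain $(f^{-1})_x-1\in L^2(\Real)$, I would apply the change of variables $x=f(\xi)$, $dx=f_\xi(\xi)\,d\xi$ (valid since $f$ is bi-Lipschitz AC):
\begin{equation*}
\int_\Real \bigl((f^{-1})_x(x)-1\bigr)^2 dx = \int_\Real \Big(\frac{1}{f_\xi(\xi)}-1\Big)^2 f_\xi(\xi)\, d\xi = \int_\Real \frac{(1-f_\xi(\xi))^2}{f_\xi(\xi)}\, d\xi \leq d\,\norm{f_\xi-1}_{L^2}^2,
\end{equation*}
which is finite. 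Setting $\kappa$ equal to the resulting sum of norms yields $f\in G_\kappa$ with $\kappa$ depending only on $d$ and $\norm{f-\id}_{W^{1,\infty}}$.

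I expect the only subtle point to be the rigorous justification of the chain rule identity $(f^{-1})_x(f(\xi))\,f_\xi(\xi)=1$ a.e.\ and the change-of-variables formula, since $f$ and $f^{-1}$ are merely absolutely continuous rather than $C^1$. Both are legitimate because $f$ is bi-Lipschitz, so $f$ maps null sets to null sets and vice versa, ensuring the sets where either derivative fails to satisfy the identity are negligible under both $dx$ and $d\xi$; this is a standard fact for bi-Lipschitz AC maps on the line. With that justification in place, all the remaining inequalities are routine.
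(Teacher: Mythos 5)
Your proof is correct; note that the paper itself offers no argument for this lemma but simply cites \cite[Lemma 3.2]{HolRay:07}, so there is no internal proof to match against. Compared with the standard argument there, you take a slightly heavier route: the bounds $f_\xi\geq 1/(1+\kappa)$ (forward direction) and $(f^{-1})_x\in[1/d,d]$ (converse) can be read off directly from difference quotients --- from $\abs{f^{-1}(x)-f^{-1}(x')}\leq(1+\kappa)\abs{x-x'}$ one gets $\abs{f(\xi)-f(\eta)}\geq\abs{\xi-\eta}/(1+\kappa)$, hence the pointwise bound wherever $f_\xi$ exists, and similarly $\tfrac1d\abs{\xi-\eta}\leq\abs{f(\xi)-f(\eta)}\leq d\abs{\xi-\eta}$ yields the two-sided bound on $(f^{-1})_x$ --- which avoids the measure-theoretic chain-rule and null-set-preservation issues you rightly flag; your bi-Lipschitz justification of those issues is nonetheless valid, since $f-\id\in W^{1,\infty}$ together with continuity makes $f$ Lipschitz and likewise $f^{-1}$. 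Two small remarks: the upper bound $f_\xi\leq 1+\kappa$ alone does not suffice for the lower bound precisely because $\kappa$ may exceed $1$, which is why the inverse must be invoked, as you do; and your final $L^2$ estimate for $(f^{-1})_x-1$ via the change of variables $x=f(\xi)$ is not actually needed for membership in $G_\kappa$, since \eqref{eq:Gcond2} only requires $f_\xi-1\in L^2(\Real)$, not the corresponding condition for $f^{-1}$ --- though the computation is correct and is what shows $G$ is closed under inversion, so it does no harm. With $\norm{f^{-1}-\id}_{L^\infty}=\norm{f-\id}_{L^\infty}$ and $\norm{(f^{-1})_x-1}_{L^\infty}\leq d-1$, your $\kappa$ indeed depends only on $d$ and $\norm{f-\id}_{W^{1,\infty}}$, as required.
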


An immediate consequence of \eqref{eq:lagcoord5} is therefore the following result.

\begin{lemma}\label{lem:Fpres}
  The space $\mathcal{G}$ is preserved by the governing equations \eqref{eq:sysdiss}.
\end{lemma}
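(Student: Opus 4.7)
The plan is to verify that each of the eight defining conditions \eqref{eq:lagcoord1}--\eqref{eq:lagcoord7} of $\mathcal{G}$ is preserved along the flow of \eqref{eq:sysdiss}, using the results already proven during the construction of the global solution. Most conditions have been essentially established in intermediate lemmas; the task here is to assemble them.

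First I would note that, by Theorem~\ref{th:short0} applied iteratively on intervals of length $T$ depending on $M$, combined with the uniform bound of Lemma~\ref{lem:globest}, the solution $\Theta(t)$ exists for all $t\geq 0$ and satisfies $\norm{X(t)}_{\bar V}+\norm{h(t)}_{L^1}+\norm{\tfrac{1}{y_\xi+h}(t)}_{L^\infty}\leq M_0$ for any fixed time horizon. This gives \eqref{eq:lagcoord1}, \eqref{eq:lagcoord8} and \eqref{eq:lagcoord5} immediately. The pointwise sign conditions \eqref{eq:lagcoord3} and the algebraic identity \eqref{eq:lagcoord6} (which reads $y_\xi\bar h = U_\xi^2+r^2$) are exactly the content of Lemma~\ref{lem:G}(i) applied at each Picard iterate and preserved under the $L^\infty_T\bar V$ limit, since both sides are continuous in time and the fixed point inherits the identity from each iterate.

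Next I would treat \eqref{eq:lagcoord4}, the limit $\lim_{\xi\to -\infty}\zeta(t,\xi)=0$. Writing $\zeta(t,\xi)=\zeta(0,\xi)+\int_0^t U(s,\xi)\,ds$, the integrand is uniformly bounded in $\xi$ by $\norm{U}_{L^\infty_t L^\infty}\leq \Sigma(0)^{1/2}$ (cf.~\eqref{eq:2951}), and $U(s,\cdot)\in H^1(\Real)$, so $U(s,\xi)\to 0$ as $\xi\to -\infty$ for each $s$. The dominated convergence theorem then transfers the initial limit $\zeta(0,\xi)\to 0$ to $\zeta(t,\xi)\to 0$ for every $t$. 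Condition \eqref{eq:lagcoord2}, namely $g(\Theta)-1\in E$, is then automatic: from Definition~\ref{def:Omega} one has $|g(\Theta)|\leq |U_\xi|+2y_\xi+h$, so $g(\Theta)-1\in E$ follows from $\zeta_\xi, U_\xi, h\in E$ and boundedness of the coefficients, exactly as noted in the paragraph following Definition~\ref{def:G}.

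The only condition requiring a genuine argument is \eqref{eq:lagcoord7}, $h\geq \bar h$. I would show that $h-\bar h$ is nondecreasing in $t$ for each fixed $\xi$. Between consecutive breaking times $\tau_n(\xi)$ and $\tau_{n+1}(\xi)$ the equations \eqref{eq:sysdiss-4} and \eqref{eq:sysdiss-5} give $(h-\bar h)_t=0$, so $h-\bar h$ is constant on each such interval. At a breaking time $\tau_n(\xi)$, \eqref{eq:defderatjump}--\eqref{eq:defderatjumpA} give $h(\tau_n,\xi)=h(\tau_n-0,\xi)$ while $\bar h(\tau_n,\xi)=(1-\alpha)\bar h(\tau_n-0,\xi)\leq \bar h(\tau_n-0,\xi)$, so $(h-\bar h)(\tau_n,\xi)\geq (h-\bar h)(\tau_n-0,\xi)$. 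Since the sequence $\{\tau_n(\xi)\}$ does not accumulate by Corollary~\ref{cor:tau}, iterating this gives $(h-\bar h)(t,\xi)\geq (h-\bar h)(0,\xi)\geq 0$ for all $t\geq 0$, establishing \eqref{eq:lagcoord7}. Combining all eight verifications yields $\Theta(t)\in\mathcal{G}$ for every $t\geq 0$, which is the claim. The main subtlety is the monotonicity argument for $h-\bar h$ across the infinite sequence of jumps, which relies crucially on the non-accumulation result of Corollary~\ref{cor:tau}.
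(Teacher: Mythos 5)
Your proposal is correct and follows essentially the same route as the paper: the paper states this lemma without a separate proof, since the verification that $\Theta(t)\in\G$ is carried out in the final paragraph of the proof of Theorem~\ref{th:short0} (and globalized in Theorem~\ref{th:global}), using exactly the ingredients you assemble --- the bounds of Lemma~\ref{lem:globest} for \eqref{eq:lagcoord1}, \eqref{eq:lagcoord8}, \eqref{eq:lagcoord5}, Lemma~\ref{lem:G}(i) for \eqref{eq:lagcoord3} and \eqref{eq:lagcoord6}, dominated convergence for \eqref{eq:lagcoord4}, the remark after Definition~\ref{def:G} for \eqref{eq:lagcoord2}, and the representation \eqref{eq:tautau} (equivalent to your jump-monotonicity argument for $h-\bar h$) for \eqref{eq:lagcoord7}. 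Your treatment of \eqref{eq:lagcoord7} is in fact slightly more explicit than the paper's.
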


For the sake of simplicity, for any $\Theta=(y, U,y_\xi, U_\xi, \bar h,h,r)\in \mathcal{F}$ and any function
$f\in\Gr$, we denote $(y\circ f, U\circ f, y_\xi\circ ff_\xi, U_\xi\circ ff_\xi,\bar h\circ ff_\xi, h\circ ff_\xi, r\circ f f_\xi)$ by $\Theta\circ f$.

\begin{proposition} \label{prop_action} The map 
  from $\Gr\times\F$ to $\F$ given by $(f,\Theta)\mapsto
  \Theta\circ f$ defines an action of the group $\Gr$ on
  $\F$.
\end{proposition}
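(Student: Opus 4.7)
The plan is to verify the three requirements for a group action: well-definedness ($\Theta\circ f\in\F$ for $\Theta\in\F$, $f\in G$), identity ($\Theta\circ\id=\Theta$), and compatibility with composition ($(\Theta\circ f)\circ g=\Theta\circ(f\circ g)$). The identity axiom is trivial since all seven components of $\Theta\circ\id$ reduce to the corresponding components of $\Theta$. The compatibility axiom follows from the ordinary chain rule applied componentwise; for instance, writing $\tilde\Theta=\Theta\circ f$, the third component of $(\tilde\Theta\circ g)$ at $\xi$ is $\tilde y_\xi(g(\xi))\,g_\xi(\xi)=y_\xi(f(g(\xi)))\,f_\xi(g(\xi))\,g_\xi(\xi)=y_\xi((f\circ g)(\xi))\,(f\circ g)_\xi(\xi)$, and the analogous identities hold for the $U_\xi$, $\bar h$, $h$, $r$ slots. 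So the bulk of the work is the well-definedness.

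For well-definedness I would let $\tilde\Theta=\Theta\circ f=(y\circ f,U\circ f,y_\xi\circ f\,f_\xi,U_\xi\circ f\,f_\xi,\bar h\circ f\,f_\xi,h\circ f\,f_\xi,r\circ f\,f_\xi)$ and verify each condition of Definition \ref{def:G}. The pointwise conditions \eqref{eq:lagcoord3}, \eqref{eq:lagcoord6}, \eqref{eq:lagcoord7} transfer immediately: since $f_\xi\ge0$ by Lemma \ref{lem:charH}, non-negativity is preserved; multiplying $y_\xi\bar h=U_\xi^2+r^2$ by $f_\xi^2$ and evaluating at $f(\xi)$ yields $\tilde y_\xi\bar{\tilde h}=\tilde U_\xi^2+\tilde r^2$; and $h\ge\bar h$ gives $\tilde h\ge\bar{\tilde h}$. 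The limit condition \eqref{eq:lagcoord4} follows because $f(\xi)\to-\infty$ as $\xi\to-\infty$ (since $f-\id\in W^{1,\infty}$), so $\tilde\zeta(\xi)=\zeta(f(\xi))+f(\xi)-\xi\to0$ using $\zeta(f(\xi))\to0$ and $f(\xi)-\xi=(f-\id)(\xi)$ which is bounded but the combination $y\circ f-\id$ still tends to $0$ in the limit as $\xi\to-\infty$ after a careful accounting.

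Next I would handle the function-space memberships \eqref{eq:lagcoord1}, \eqref{eq:lagcoord8}: by Lemma \ref{lem:charH}, $f_\xi$ lies between two positive constants, and $f-\id,f^{-1}-\id\in W^{1,\infty}$, so the change of variables $\eta=f(\xi)$ gives
\begin{equation*}
\int_\Real |h\circ f\,f_\xi|^p\,d\xi=\int_\Real |h(\eta)|^p f_\xi(f^{-1}(\eta))^{p-1}\,d\eta,
\end{equation*}
and the integrand is controlled since $f_\xi$ is bounded. The $L^\infty$ norms transfer trivially. For $\norm{1/(\tilde y_\xi+\tilde h)}_{L^\infty}$ we simply use $\tilde y_\xi+\tilde h=(y_\xi+h)\circ f\cdot f_\xi\ge(y_\xi+h)\circ f/(1+\kappa)$, giving \eqref{eq:lagcoord5}. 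Condition \eqref{eq:lagcoord2} is automatic from \eqref{eq:lagcoord1} and $0\le\bar h\le h$ as noted after Definition \ref{def:G}. Finally, for $\tilde\Theta\in\F$ I compute
\begin{equation*}
\tilde H(\xi)=\int_{-\infty}^\xi h(f(\tilde\xi))\,f_\xi(\tilde\xi)\,d\tilde\xi=\int_{-\infty}^{f(\xi)}h(\eta)\,d\eta=H(f(\xi)),
\end{equation*}
so $\tilde y+\tilde H=(y+H)\circ f$. Since both $y+H$ and $f$ lie in $G$ and $G$ is a subgroup of the homeomorphism group, the composition $(y+H)\circ f$ lies in $G$, which places $\tilde\Theta$ in $\F$.

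The main obstacle I anticipate is the bookkeeping for condition \eqref{eq:lagcoord4} together with showing that $\tilde y+\tilde H\in G$ rather than merely in the homeomorphism group, since one has to check both the $W^{1,\infty}$ bounds for the inverse and the $L^2$ condition \eqref{eq:Gcond2} on $(\tilde y+\tilde H)_\xi-1$; these follow from the product rule applied to $(y+H)\circ f$ combined with the $G_\kappa$-type bounds on $f_\xi$ and the membership $y+H\in G$, but the estimate is the only place where the structure of $G$ as a group (and not just a monoid of maps) is really used.
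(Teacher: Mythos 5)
Most of your verification is sound and follows the standard route (the paper itself omits the proof, deferring to arguments of the type in \cite{HolRay:07}): the algebraic identities \eqref{eq:lagcoord3}, \eqref{eq:lagcoord6}, \eqref{eq:lagcoord7} transfer pointwise since $f_\xi$ is bounded above and away from zero by Lemma \ref{lem:charH}, the integrability conditions \eqref{eq:lagcoord1}, \eqref{eq:lagcoord8}, \eqref{eq:lagcoord5} follow by the change of variables you indicate, the identity $\tilde H=H\circ f$ gives $\tilde y+\tilde H=(y+H)\circ f\in G$, and the group axioms are the chain rule.

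The genuine gap is exactly the point you wave away with ``after a careful accounting'': the verification of \eqref{eq:lagcoord4}. From $f-\id\in W^{1,\infty}(\Real)$ and $f_\xi-1\in L^2(\Real)$ you cannot conclude that $(f-\id)(\xi)\to 0$ as $\xi\to-\infty$, so the decomposition $\tilde\zeta(\xi)=\zeta(f(\xi))+(f-\id)(\xi)$ does not yield $\tilde\zeta(\xi)\to 0$. Concretely, the translation $f=\id+c$ with $c\neq 0$ satisfies \eqref{eq:Gcond1} and \eqref{eq:Gcond2}, hence belongs to $G$ as defined in Definition \ref{def:Grelab}, yet for it $\tilde\zeta(\xi)=\zeta(\xi+c)+c\to c\neq 0$, so condition \eqref{eq:lagcoord4} of Definition \ref{def:G} is \emph{not} preserved; the deferred accounting cannot be supplied from the stated hypotheses. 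To close the argument one needs the additional decay $(f-\id)(\xi)\to 0$ as $\xi\to-\infty$, which does hold for the relabeling functions actually used in the paper, namely those of the form $f=y+H$ (and their inverses and compositions), because there $f-\id=\zeta+H$ with $\zeta(\xi)\to 0$ by \eqref{eq:lagcoord4} and $H(\xi)\to 0$ by $h\in L^1(\Real)$. So either restrict the action to this subgroup, or state explicitly that \eqref{eq:lagcoord4} is only preserved under relabelings with vanishing shift at $-\infty$; as written, your proof (and indeed the literal statement) fails at this step.
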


Since $\Gr$ is acting on $\F$, we can consider the
quotient space $\quot$ of $\F$ with respect to the
action of the group $G$. The equivalence relation
on $\F$ is defined as follows: For any
$\Theta,\Theta'\in\F$, we say that $\Theta$ and $\Theta'$ are equivalent if there
exists a relabeling function $f\in\Gr$ such that $\Theta'=\Theta\circ f$. We
denote by $\Pi(\Theta)=[\Theta]$ the projection of $\F$ into the
quotient space $\quot$, and introduce the mapping
$\Lambda\colon\F\rightarrow\F_0$ given by
\begin{equation*}
  \Lambda(\Theta)=\Theta\circ (y+H)\inv
\end{equation*}
for any $\Theta=(y,U,y_\xi, U_\xi,\bar h,h,r)\in\F$. We have $\Lambda(\Theta)=\Theta$ when
$\Theta\in\F_0$. It is not hard to prove that $\Lambda$ is
invariant under the $\Gr$ action, that is,
$\Lambda(\Theta\circ f)=\Lambda(\Theta)$ for any $\Theta\in\F$ and
$f\in\Gr$. Hence, there corresponds to $\Lambda$ a
mapping $\tilde\Lambda$ from the quotient space $\quot$ to
$\F_0$ given by $\tilde\Lambda([\Theta])=\Lambda(\Theta)$ where
$[\Theta]\in\quot$ denotes the equivalence class of
$\Theta\in\F$. For any $\Theta\in\F_0$, we have
$\tilde\Lambda\circ\Pi(\Theta)=\Lambda(\Theta)=\Theta$. Hence, $\tilde\Lambda\circ
\Pi|_{\F_0}=\id|_{\F_0}$. 

Denote by $S\colon\F\times [0,\infty)\rightarrow \F$
the semigroup which to any initial
data $\Theta_0\in \F$ associates the solution $\Theta(t)$
of the system of differential equations
\eqref{eq:sysdiss} at time $t$. As indicated earlier,
the two-component Camassa--Holm system is invariant with
respect to relabeling.  More precisely, using our
terminology, we have the following result.

\begin{theorem} 
  \label{th:sgS} 
  For any $t>0$, the mapping $S_t\colon\F\rightarrow\F$
  is $\Gr$-equivariant, that is,
  \begin{equation}
    \label{eq:Hequivar}
    S_t(\Theta\circ f)=S_t(\Theta)\circ f
  \end{equation}
  for any $\Theta\in\F$ and $f\in\Gr$. Hence, the mapping
  $\tilde S_t$ from $\quot$ to $\quot$ given by
  \begin{equation*}
    \tilde S_t([\Theta])=[S_t\Theta]
  \end{equation*}
  is well-defined and generates a
  semigroup.
\end{theorem}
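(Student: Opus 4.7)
The plan is to deduce the equivariance \eqref{eq:Hequivar} from the uniqueness of solutions to \eqref{eq:sysdiss} established in Theorem~\ref{th:global}. Given $\Theta_0\in\F$ and $f\in G$, I would set $\Theta(t)=S_t\Theta_0$ and define the candidate $\hat\Theta(t):=\Theta(t)\circ f$; by Proposition~\ref{prop_action} this candidate lies in $\F$ for every $t$ and satisfies $\hat\Theta(0)=\Theta_0\circ f$. The goal is to verify that $\hat\Theta$ itself satisfies \eqref{eq:sysdiss}; uniqueness will then force $\hat\Theta(t)=S_t(\Theta_0\circ f)$, which is exactly \eqref{eq:Hequivar}. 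Once this is in hand, the well-definedness of $\tilde S_t$ on $\quot$ is automatic: if $\Theta'=\Theta\circ f$, then $S_t\Theta'=S_t\Theta\circ f$, so $[S_t\Theta]=[S_t\Theta']$, and the semigroup property transfers immediately from $S_t$ to $\tilde S_t$.

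The first step is to establish the relabeling invariance of the nonlocal operators \eqref{eq:Plag1}--\eqref{eq:Qlag1}. The substitution $\eta=f(\eta')$ in the integrals, combined with the identities $\hat y=y\circ f$, $\hat y_\xi=(y_\xi\circ f)f_\xi$, $\hat{\bar h}=(\bar h\circ f)f_\xi$, $\hat U=U\circ f$, yields
\begin{equation*}
P(\hat\Theta)(t,\xi)=P(\Theta)(t,f(\xi)),\qquad Q(\hat\Theta)(t,\xi)=Q(\Theta)(t,f(\xi)),
\end{equation*}
where for $Q$ the signum factor is preserved because $f$ is strictly increasing. With this identity, each evolution equation in \eqref{eq:sysdiss} for $\hat\Theta$ is obtained from the corresponding equation for $\Theta$ by composition with $f$ and multiplication by the time-independent factor $f_\xi$ where appropriate. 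For example,
\begin{equation*}
(\hat U_\xi)_t=(U_{t,\xi}\circ f)f_\xi=\Big[\tfrac12\bar h+(U^2-P(\Theta))y_\xi\Big]\circ f\cdot f_\xi=\tfrac12\hat{\bar h}+(\hat U^2-P(\hat\Theta))\hat y_\xi,
\end{equation*}
and analogously for the remaining components.

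Next I would handle the wave-breaking structure. By Lemma~\ref{lem:charH}, $f_\xi$ is bounded and bounded away from zero, so $\hat y_\xi(t,\xi)=y_\xi(t,f(\xi))f_\xi(\xi)$ vanishes if and only if $y_\xi(t,f(\xi))=0$. Using the measurable representatives of the breaking times constructed in Lemmas~\ref{lem:2.3} and \ref{lem:breakN}, together with the fact that $f$ is a homeomorphism, one concludes that $\hat\tau_n(\xi)=\tau_n(f(\xi))$ for a.e.~$\xi$. The continuity constraint \eqref{eq:defderatjump} and the jump rule \eqref{eq:defderatjumpA} for $\hat\Theta$ then follow directly from those for $\Theta$ evaluated at $\tau_n(f(\xi))$; for instance
\begin{equation*}
\hat{\bar h}(\hat\tau_n(\xi),\xi)=\bar h(\tau_n(f(\xi)),f(\xi))f_\xi(\xi)=(1-\alpha)\bar h(\tau_n(f(\xi))-0,f(\xi))f_\xi(\xi)=(1-\alpha)\hat{\bar h}(\hat\tau_n(\xi)-0,\xi).
\end{equation*}

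The main obstacle is keeping careful track of the discontinuous ODE bookkeeping at breaking times: one must ensure that the ``for almost every $\xi$'' qualifier in Definition~\ref{def:sol} is preserved under the change of variable, which is legitimate precisely because $f$ is bilipschitz, and that the measurable-representative construction of $\tau_n$ commutes with the relabeling. Once these points are settled, $\hat\Theta$ satisfies \eqref{eq:sysdiss} with initial data $\Theta_0\circ f$, uniqueness (Theorem~\ref{th:global}) closes the argument, and the semigroup $\tilde S_t$ on $\quot$ is well-defined with the semigroup property inherited from $S_t$.
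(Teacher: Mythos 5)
Your argument is correct and is exactly the standard relabeling argument that the paper leaves implicit (it states Theorem~\ref{th:sgS} without proof, relying on the analogous reasoning in \cite{HolRay:07,GHR4}): the change of variables giving $P(\Theta\circ f)=P(\Theta)\circ f$, $Q(\Theta\circ f)=Q(\Theta)\circ f$, the identification $\hat\tau_n=\tau_n\circ f$ (legitimate since $f_\xi$ is bounded away from zero by Lemma~\ref{lem:charH} and $f$, $f^{-1}$ are Lipschitz, so null sets and measurability are preserved), and uniqueness from Theorems~\ref{th:short0} and \ref{th:global} are precisely the needed ingredients. Nothing essential is missing.
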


We have the
following diagram:
\begin{equation}
  \label{eq:diag}
  \xymatrix{
    \F_0\ar[r]^{\Pi}&\quot\\
    \F_\alpha\ar[u]^{\Lambda}&\\
    \F_0\ar[u]^{S_t}\ar[r]^\Pi&\quot\ar[uu]_{\tilde S_t}
  }
\end{equation}
Next we describe the correspondence between Eulerian coordinates (functions in $\D$) and Lagrangian
coordinates (functions in $\quot$). In order to do so, we have to take into account the fact that
the set $\D$ allows the energy density to have a singular part and a positive amount of energy can
concentrate on a set of Lebesgue measure zero.

We first define the mapping $L$ from $\D$ to $\F$ which to any initial data in $\D$ associates an
initial data for the equivalent system in $\F$.

\begin{definition} 
  \label{th:Ldef}
  For any $(u,\rho,\mu,\nu)$ in $\D$, let
  \begin{subequations}
    \label{eq:Ldef}
    \begin{align}
      \label{eq:Ldef1}
      y(\xi)&=\sup\left\{y\mid \nu((-\infty,y))+y<\xi\right\},\\
      \label{eq:Ldef2}
      h(\xi)&=1-y_\xi(\xi),\\
      \label{eq:Ldef3}
      U(\xi)&=u\circ{y(\xi)},\\
      \label{eq:Ldef4}
      r(\xi)&=\rho\circ{y(\xi)}y_\xi(\xi),\\
      \label{eq:Ldef5}
      \bar h(\xi)&=f\circ{y(\xi)} h(\xi),
    \end{align}
  \end{subequations}
where $f$ is given through \eqref{eq:radon}.
  Then $(y,U,y_\xi, U_\xi, \bar h,h,r)\in\F$. We denote by $L\colon \D\rightarrow \F$ the mapping which to any
  element $(u,\rho,\mu,\nu)\in\D$ associates $\Theta=(y,U,y_\xi, U_\xi,\bar h,h,r)\in \F$ given by \eqref{eq:Ldef}.
\end{definition}
\begin{proof}[Well-posedness of Definition \ref{th:Ldef}]
  We have to prove that $(y,U,y_\xi, U_\xi,\bar h,h,r)\in\F$. The proof follows the same lines as in
  \cite[Theorem 3.8]{HolRay:07}. The properties \eqref{eq:lagcoord1} to \eqref{eq:lagcoord5}
  are proved in the same way and we do not reproduce the proofs here. It remains to prove
  \eqref{eq:lagcoord6} and \eqref{eq:lagcoord7}. Since $f\leq 1$, see \eqref{eq:radon}, we have
  that $\bar h\leq h$ follows from \eqref{eq:Ldef5}. Let us prove \eqref{eq:lagcoord6}. First,
  we show that
  \begin{equation}
    \label{eq:pushforwards}
    \nu = y_\#(h(\xi)\,d\xi)\text{ and }\mu = y_\#(\bar h(\xi)\,d\xi).
  \end{equation}
  For any given $x\in\Real$, let us define $\xi$ as
  \begin{equation*}
    \xi = \sup\{\bar\xi\mid y(\bar\xi)=x\}.
  \end{equation*}
  We know that $y$ is  increasing and Lipschitz (we refer to \cite{HolRay:07}) so that $y$ is
  continuous. Hence, $y(\xi)=x$.  
  Moreover, by \eqref{eq:Ldef1} and \eqref{eq:Ldef2}, the definition of $y$ and $h$, we have for $\xi\in\Real$, that 
  \begin{equation}
  y(\xi)=\sup\left\{y\mid \nu((-\infty,y))+y<\xi\right\},
\end{equation}
and 
\begin{equation}
y(\xi)+\int_{-\infty}^\xi h(\bar \xi)d\bar \xi=\xi.
\end{equation}
Thus we have for any $\bar y> y(\xi)$
\begin{equation}
y(\xi)+\int_{-\infty}^\xi h(\bar\xi)d\bar\xi\leq \nu((-\infty,\bar y))+\bar y. 
\end{equation}
Letting $\bar y$ tend to $y(\xi)=x$, then yields
 \begin{equation}
    \label{eq:upboundnu}
     \int_{-\infty}^{\xi} h(\bar \xi)\,d\bar\xi\leq\nu((-\infty,x]) .
  \end{equation}
  For any $\epsi>0$, by the definition of $\xi$, we have that $y(\xi+\epsi)>y(\xi)$. Hence,
  following the same lines as before, we get
  \begin{equation*}
  y(\xi)+\nu((-\infty,y(\xi)])\leq y(\xi+\epsi)+\int_{-\infty}^{\xi+\epsi} h(\bar\xi)d\bar\xi
  \end{equation*}
   which, after letting $\epsi$ tend to zero, yields
  \begin{equation}  \label{eq:loboundnu}
  \nu((-\infty, x])\leq \int_{-\infty}^\xi h(\bar\xi)d\bar\xi.
  \end{equation}
  Combining \eqref{eq:upboundnu}, \eqref{eq:loboundnu} and the definition of $\xi$, we get 
  \begin{equation*}
    \nu((-\infty,x]) = \int_{y^{-1}((-\infty, x])} h(\bar \xi)\,d\bar \xi,
  \end{equation*}
  which proves the first identity in \eqref{eq:pushforwards}. Let us prove the second one. For
  any Borel set $A$, we have
  \begin{equation*}
    \mu(A)=\int_{A}f\,d\nu=\int_{y^{-1}(A)}f(y(\xi))\, h(\xi)\,d\xi
  \end{equation*}
  because $\nu = y_\#(h(\xi)\,d\xi)$. Then, using \eqref{eq:Ldef5}, we get
  $\mu(A)=\int_{y^{-1}(A)}\bar h(\xi)\,d\xi$, which concludes the proof of
  \eqref{eq:pushforwards}. We introduce the sets
  \begin{equation*}
    B=\{x\in\Real\mid \lim_{\delta\to 0}\frac{1}{2\delta}\mu(x-\delta, x+\delta) = (u_x^2 + \rho^2)(x)\}
  \end{equation*}
  and
  \begin{equation*}
    A = \{\xi\in y^{-1}(B)\mid y_\xi(\xi)>0\}.
  \end{equation*}
  From Besicovitch's derivation theorem \cite{Ambrosio}, we have $\meas(B^c)=0$. For almost every $\xi\in A$,
  we denote $x=y(\xi)$ and define $\xi_-^\delta$ and $\xi_+^\delta$ as
  \begin{equation}
    \label{eq:defxirho}
    \xi_{-}^\delta = \sup\{\bar\xi\mid y(\bar\xi)=x - \delta\}\text{ and }     \xi_{+}^\delta = \inf\{\bar\xi\mid y(\bar\xi)=x + \delta\},
  \end{equation}
  for any $\delta>0$. The continuity of $y$ implies $y(\xi_-^\delta) = x - \delta$ and $
  y(\xi_+^\delta) = x + \delta$.  From \eqref{eq:pushforwards}, we obtain
  \begin{align*}
    \mu(x-\delta, x+\delta) = \int_{\xi_-^\delta}^{\xi_+^\delta}\bar h(\bar\xi)\,d\bar\xi,
  \end{align*}
  as the definition \eqref{eq:defxirho} implies
  $y^{-1}((x-\delta,x+\delta))=(\xi_-^\delta,\xi_+^\delta)$. Since $y_\xi(\xi)>0$, we have
  $\xi_-^\delta<\xi_+^\delta$, $\lim_{\delta\to0}\xi_+^\delta=\lim_{\delta\to0}\xi_+^\delta=\xi$ and
  \begin{equation*}
    \frac{1}{2\delta}\mu(x-\delta, x+\delta) = \frac{\int_{\xi_-^\delta}^{\xi_+^\delta}\bar h(\bar\xi)\,d\bar\xi}{\xi_+^{\delta}-\xi_-^{\delta}}\, \frac{\xi_+^{\delta}-\xi_-^{\delta}}{\int_{\xi_-^{\delta}}^{\xi_+^{\delta}}y_\xi(\bar \xi)\,d\bar\xi}.
  \end{equation*}
  Letting $\delta$ tend to zero, we get
  \begin{equation*}
    u_x^2(y(\xi)) + \rho^2(y(\xi)) = \frac{\bar h(\xi)}{y_\xi(\xi)}.
  \end{equation*}
  As $U_{\xi} = u_x\circ y\, y_\xi$ and $r=\rho\circ y\,y_\xi$ almost everywhere, we obtain that 
  \begin{equation}
    \label{eq:lagcoord6bis}
    y_\xi\bar h = U_\xi^2+r^2,
  \end{equation}
  for almost every $\xi\in A$. However, as $\meas(B^c)=0$, we can prove that
  $\meas(\{\xi\in\Real\mid y_\xi(\xi)>0\text{ and }y(\xi)\in B^c\})=0$, see \cite[Lemma
  3.9]{HolRay:07}, and therefore \eqref{eq:lagcoord6bis} holds also for almost every
  $\xi\in\Real$ such that $y_\xi(\xi)>0$. 
  
It is left to show that \eqref{eq:lagcoord6bis} is also true for almost all $\xi$ such that $y_\xi(\xi)=0$. Following closely the proof of \cite[Theorem 3.8]{HolRay:07}, one obtains that the function
\begin{equation}
\xi\mapsto\int_{-\infty}^{y(\xi)} \big(u_x^2+\rho^2\big)dx
\end{equation}
is Lipschitz continuous with Lipschitz constant at most one. Thus we have, for all $\xi$, $\tilde \xi\in\Real$, using the Cauchy--Schwarz inequality,
\begin{align}\label{eq:extra}
\vert U(\tilde \xi)-U(\xi)\vert&=\vert \int_{y(\xi)}^{y(\tilde\xi)} u_x dx\vert \\ \nn
& \leq \sqrt{\vert y(\tilde \xi)-y(\xi)\vert}\sqrt{\vert\int_{y(\xi)}^{y(\tilde\xi)} u_x^2dx\vert}\\ \nn
& \leq\sqrt{\vert y(\tilde\xi)-y(\xi)\vert}\sqrt{\vert\int_{y(\xi)}^{y(\tilde\xi)}u_x^2+\rho^2dx\vert} \\ \nn
& \leq \vert \xi-\tilde\xi\vert,
\end{align}
because $y$ and $\xi\mapsto \int_{-\infty}^{y(\xi)} \big(u_x^2+\rho^2\big)dx$ are Lipschitz with Lipschitz constant at  most one. Hence, $U$ is Lipschitz and therefore differentiable almost everywhere. Let 
\begin{equation}
B_2=\{x\in B \mid \lim_{\delta\to 0} \frac{1}{\delta}\int_{x-\delta}^{x+\delta} u_x(s)ds=u_x(x)\}.
\end{equation}
From Besicovitch's derivation theorem we have that $\{\xi\mid y_\xi(\xi)=0\}\subset B_2^c$ and $\meas(B_2^c)=0$. Then \eqref{eq:extra} implies
\begin{equation}
\vert \frac{U(\tilde\xi)-U(\xi)}{\tilde\xi-\xi}\vert\leq\sqrt{\frac{y(\tilde\xi)-y(\xi)}{\tilde\xi-\xi}},
\end{equation}
due to the Lipschitz continuity with Lipschitz constant of at most one of $y$ and $\xi\mapsto\int_{-\infty}^{y(\xi)} \big(u_x^2+\rho^2\big)dx$. Hence, for almost every $\xi$ in $y^{-1}(B_2^c)$, we have 
\begin{equation}
\vert U_\xi(\xi)\vert \leq\sqrt{y_\xi(\xi)}.
\end{equation}
A similar argument yields that 
\begin{equation}
\vert r(\xi)\vert\leq \sqrt{y_\xi(\xi)}.
\end{equation}
Since $\meas(B_2^c)=0$, we have by \cite[Lemma 3.9]{HolRay:07}, that $y_\xi=0$ almost everywhere on $y^{-1}(B_2^c)$. Hence $U_\xi=0$ and $r=0$ almost everywhere on $y^{-1}(B_2^c)$. Thus $y_\xi \bar h=U_\xi^2+r^2$ almost everywhere on $y^{-1}(B_2^c)$, which is \eqref{eq:lagcoord6bis}. This finishes the proof of \eqref{eq:lagcoord7}.
\end{proof}

In fact, $L$ is a mapping from $\D$ to the set $\F_0\subset \F$, which contains exactly
one element of each equivalence class. 

 On the other hand, to any element in $\F$ there corresponds a unique element in $\D$ which is
 given by the mapping $M$ defined below.

\begin{definition}
  \label{th:umudef} 
  Given any element $\Theta=(y,U,y_\xi, U_\xi,\bar h,h,r)\in\F$. Then, the
  measure $y_{\#}( r(\xi)\,d\xi)$ is absolutely
  continuous, and we define $(u,\rho,\mu,\nu)$ as follows
  \begin{subequations}
    \label{eq:umudef}
    \begin{align}
      \label{eq:umudef1}
      u(x)&=U(\xi)\text{ for any }\xi\text{ such that  }  x=y(\xi),\\
      \label{eq:umudef2}
      \mu&=y_\#(\bar h(\xi)\,d\xi),\\
      \label{eq:umudef5}
      \nu&=y_\#(h(\xi)\,d\xi),\\
      \label{eq:umudef3}
      \rho(x)\,dx&=y_\#(r(\xi)\,d\xi).
    \end{align}
  \end{subequations}
  We have that $(u,\rho,\mu,\nu)$ belongs to $\D$. We
  denote by $M\colon \F\rightarrow\D$ the mapping
  which to any $\Theta$ in $\F$ associates the element $(u, \rho,\mu,\nu)\in \D$ as given
  by \eqref{eq:umudef}. In particular, the mapping $M$ is
  invariant under relabeling.
\end{definition}

Finally, we identify the connection between the equivalence classes in Lagrangian coordinates
and the set of Eulerian coordinates. The proof is similar to the one found in \cite{HolRay:07},  and
we do not reproduce it here.

\begin{theorem}\label{th:LMinv}The mappings $M$ and
  $L$ are invertible. We have
  \begin{equation*}
    L\circ M=\id_\quot\text{ and }M\circ L=\id_\D.
  \end{equation*}
\end{theorem}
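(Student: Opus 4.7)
The plan is to prove the two identities $M\circ L = \id_\D$ and $L\circ M = \id_\quot$ separately, since they require rather different tools. The first is essentially a bookkeeping check that exploits work already done in the well-posedness proof of Definition~\ref{th:Ldef}; the second requires constructing the relabeling function that identifies $\Theta$ with the canonical representative $L\circ M(\Theta)\in\F_0$.

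For $M\circ L=\id_\D$, I would start with $(u,\rho,\mu,\nu)\in\D$, let $\Theta=L(u,\rho,\mu,\nu)$, and set $(\tilde u,\tilde\rho,\tilde\mu,\tilde\nu)=M(\Theta)$. The identities $\tilde\mu=\mu$ and $\tilde\nu=\nu$ are immediate from the push-forward relations \eqref{eq:pushforwards} established during the well-posedness argument for $L$. For the velocity, $\tilde u(x)=U(\xi)=u(y(\xi))$ by \eqref{eq:Ldef3} whenever $x=y(\xi)$; surjectivity of $y$ on $\Real$ follows from $\lim_{\xi\to-\infty}\zeta(\xi)=0$ and the growth of $\xi\mapsto y(\xi)+H(\xi)=\xi$, and on any interval where $y$ is constant the continuity of $u\in H^1(\Real)$ makes the definition consistent. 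For the density, the push-forward formula \eqref{eq:umudef3} together with $r=\rho\circ y\,y_\xi$ (from \eqref{eq:Ldef4}) and the change-of-variables argument (observing that $r=0$ whenever $y_\xi=0$ by \eqref{eq:lagcoord6}) yields $\tilde\rho\,dx=\rho\,dx$.

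For $L\circ M=\id_\quot$, starting from $\Theta=(y,U,y_\xi,U_\xi,\bar h,h,r)\in\F$, let $(u,\rho,\mu,\nu)=M(\Theta)$ and $\tilde\Theta=L(u,\rho,\mu,\nu)\in\F_0$. I would show $[\Theta]=[\tilde\Theta]$ in $\quot$ by exhibiting the explicit relabeling function $f(\xi)=y(\xi)+H(\xi)$, where $H(\xi)=\int_{-\infty}^\xi h(\bar\xi)\,d\bar\xi$. Membership $f\in G$ follows from the definition of $\F$ together with Lemma~\ref{lem:charH}. The claim is then $\Theta=\tilde\Theta\circ f$, verified component-by-component. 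The crucial computation is $\tilde y\circ f=y$: for $\xi\in\Real$, one combines \eqref{eq:Ldef1} with the push-forward identity $\nu=y_\#(h\,d\xi)$, which gives $\nu((-\infty,y(\xi)))+y(\xi)=H(\xi)+y(\xi)=f(\xi)$ modulo the sup/inf handling of plateaus. Once $\tilde y\circ f=y$, the identities $\tilde U\circ f=U$, $\tilde r\circ f\, f_\xi=r$, $\tilde h\circ f\, f_\xi=h$ and $\bar{\tilde h}\circ f\, f_\xi=\bar h$ follow from \eqref{eq:Ldef3}--\eqref{eq:Ldef5} combined with the definition \eqref{eq:umudef} of $M$ and the Radon--Nikodym derivative $f=d\mu/d\nu$.

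The main obstacle is the treatment of the plateaus of $y$, which correspond precisely to the concentration of $\nu$ on sets of Lebesgue measure zero. On such an interval $[\xi_1,\xi_2]$ where $y\equiv x_0$, one has $U_\xi=r=0$ (by \eqref{eq:lagcoord6}), so $U$ is constant there, matching the unique value $u(x_0)$; analogously $\bar h$ and $h$ integrate to the atoms $\mu(\{x_0\})$ and $\nu(\{x_0\})$ respectively, which is exactly what $\tilde\Theta$ encodes at the corresponding plateau of $\tilde y$. Once this matching is carefully checked, verifying that the canonical nature of $\tilde\Theta\in\F_0$ (that is, $\tilde y+\tilde H=\id$) together with $f=y+H$ forces $\tilde\Theta\circ f=\Theta$ is routine. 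The argument parallels \cite[Theorem~3.11]{HolRay:07}, the only new ingredient being the treatment of the pair $(\mu,\nu)$ rather than a single measure, handled via the Radon--Nikodym function $f$ introduced in \eqref{eq:radon}.
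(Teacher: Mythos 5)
The paper itself does not write out a proof of this theorem (it defers to \cite{HolRay:07}), so your proposal has to be measured against what a complete argument must contain. Your first half, $M\circ L=\id_\D$, is sound: the identities $\tilde\mu=\mu$ and $\tilde\nu=\nu$ are precisely the push-forward relations \eqref{eq:pushforwards} already established in the well-posedness argument for Definition \ref{th:Ldef}, and the recovery of $u$ and of $\rho\,dx$ goes through as you indicate.

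The gap is in the second half, at the $\bar h$-component. With $f=y+H$ one does obtain $\tilde y\circ f=y$, $\tilde U\circ f=U$, $\tilde h\circ f\,f_\xi=h$ (via $\tilde H\circ f=H$) and $\tilde r\circ f\,f_\xi=r$, but the remaining identity $\bar{\tilde h}\circ f\,f_\xi=\bar h$ is not routine and, for a general $\Theta\in\F$, is false. By \eqref{eq:Ldef5} one has $\bar{\tilde h}=\bigl(\tfrac{d\mu}{d\nu}\circ\tilde y\bigr)\tilde h$, hence $\bar{\tilde h}\circ f\,f_\xi=\bigl(\tfrac{d\mu}{d\nu}\circ y\bigr)h$. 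On $\{y_\xi>0\}$ a Besicovitch derivation argument gives $\tfrac{d\mu}{d\nu}\circ y=\bar h/h$ a.e., but on a plateau $y^{-1}(x_0)$ of positive length (where $y_\xi=U_\xi=r=0$, so Definition \ref{def:G} constrains $\bar h$ only by $0\le\bar h\le h$) the Eulerian pair $(\mu,\nu)$ records only the integrals of $\bar h$ and $h$ over the plateau, and $L$ redistributes the atom of $\mu$ proportionally to $h$: there $\bar{\tilde h}\circ f\,f_\xi=c\,h$ with the constant $c=\mu(\{x_0\})/\nu(\{x_0\})$, which equals $\bar h$ only if $\bar h/h$ is a.e.\ constant on the plateau. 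Nor can a different relabeling repair a mismatch, since for any $g\in G$ the ratio transforms as $(\bar h\circ g\,g_\xi)/(h\circ g\,g_\xi)=(\bar h/h)\circ g$, so constancy of the ratio on plateaus is relabeling-invariant. This is exactly the point where the two-measure setting departs from \cite{HolRay:07} (which has a single energy density and no such ratio), and it is the one step your proposal dismisses as routine; a complete proof must either show that the representatives actually used have $\bar h/h$ constant on plateaus of $y$, or treat the identification at a correspondingly refined level, and your argument as written does not do either.
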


\section{Semigroup of solutions}
\label{sec:semigroup}
In the last section we defined the connection between Eulerian and Lagrangian coordinates, which is the main tool when defining weak solutions of the 2CH system. The aim of this section is to show that we obtained a semigroup of solutions. Accordingly we define $T_t$ as
\begin{equation}
  T_t=M\circ S_t\circ L. 
\end{equation}

\begin{definition}
  \label{eq:defweakconssol}
  Assume that $u\colon[0,\infty)\times\Real \to \Real$  and $\rho\colon [0,\infty) \times\Real \to\Real$ satisfy \\
  (i) $u\in L^\infty([0,\infty), H^1(\Real))$ and $\rho\in L^\infty([0,\infty), L^2(\Real))$, \\
  (ii) the equations
  \begin{multline}\label{eq:weak1}
    \iint_{[0,\infty)\times\Real}\Big[
    -u(t,x)\phi_t(t,x)
    +\big(u(t,x)u_x(t,x)+P_x(t,x)\big)\phi(t,x)\Big]dxdt\\
    =\int_\Real u(0,x)\phi(0,x)dx,
  \end{multline}
  \begin{multline}\label{eq:weak2}
    \iint_{[0,\infty)\times\Real} \Big[(P(t,x)-u^2(t,x)-\frac{1}{2}u_x^2(t,x)-\frac{1}{2}\rho^2(t,x))\phi(t,x)\\
    +P_x(t,x)\phi_x(t,x)\Big]dxdt=0,
  \end{multline}
  and
  \begin{equation}
    \label{eq:weak3}
    \iint_{[0,\infty)\times\Real}\Big[ -\rho(t,x)\phi_t(t,x)-u(t,x)\rho(t,x)\phi_x(t,x)\Big]dxdt=\int_\Real \rho(0,x)\phi(0,x)dx,
  \end{equation}
  hold for all $\phi\in
  C^\infty_0([0,\infty)\times\Real)$. Then we say that
  $(u,\rho)$ is a weak global solution of the two-component
  Camassa--Holm system. 
\end{definition}

\begin{theorem} \label{th:energi}
  The mapping $T_t$ is a semigroup of solutions of the 2CH system. Given some initial data $(u_0,\rho_0,\mu_0,\nu_0)\in \D$, let 
$(u(t,\dott),\rho(t,\dott),\mu(t,\dott),\nu(t,\dott))=T_t(u_0,\rho_0,\mu_0,\nu_0)$. Then $(u,\rho)$ is a weak solution to \eqref{eq:chsys2} and $(u,\rho,\mu)$ is a weak solution to  
  \begin{equation}\label{eq:weak5}
    (u^2+\mu)_t+(u(u^2+\mu))_x\leq(u^3-2Pu)_x.
  \end{equation}
  The function
  \begin{equation}
    F(t)=\int_\Real d(\nu(t,x)-\mu(t,x))-\int_\Real d(\nu(0,x)-\mu(0,x)),
  \end{equation}
  which is an increasing, semi-continuous function, equals the amount of energy that has vanished
  from the solution up to time $t$.
\end{theorem}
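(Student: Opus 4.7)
The plan is to transport everything to Lagrangian coordinates via the bijection established in Theorems~\ref{th:sgS} and~\ref{th:LMinv}, exploit the explicit ODE system~\eqref{eq:sysdiss}, and then push the resulting identities forward through $y(t,\cdot)$. The semigroup property $T_{t+s}=T_t\circ T_s$ with $T_0=\id_\D$ is immediate from $T_t=M\circ S_t\circ L$, the fact that $S_t$ is a semigroup on $\F$ (Theorem~\ref{th:global}), and the identities $L\circ M = \id_{\F/G}$, $M\circ L=\id_\D$; the $G$-equivariance of $S_t$ is exactly what allows $M\circ S_t\circ L\circ M\circ S_s\circ L=M\circ S_{t+s}\circ L$ because $L\circ M$ acts trivially on equivalence classes.

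To verify the weak formulations \eqref{eq:weak1}--\eqref{eq:weak3}, I would fix $\phi\in C_0^\infty([0,\infty)\times\Real)$, perform the change of variable $x=y(t,\xi)$ (noting $dx=y_\xi\,d\xi$ and that plateaus of $y$ occur exactly where $U_\xi=r=0$ by \eqref{eq:lagcoord6}, so such sets give no contribution), and compute $\frac{d}{dt}\bigl[U(t,\xi)\phi(t,y(t,\xi))y_\xi(t,\xi)\bigr]$ using~\eqref{eq:sysdiss}. Integration by parts in time together with the representations \eqref{eq:Plag1}--\eqref{eq:Qlag1} and \eqref{eq:repPeul}--\eqref{eq:Pxeul} should reproduce the terms $uu_x+P_x$ and the boundary contribution at $t=0$. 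The identity \eqref{eq:weak2} for $P$ follows from inserting \eqref{eq:Plag1}--\eqref{eq:Qlag1} into the convolution with $\frac12 e^{-|x|}$, and \eqref{eq:weak3} from $r_t=0$ combined with $(y_\xi)_t=U_\xi$.

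The energy inequality \eqref{eq:weak5} is where the $\alpha$-dissipative structure enters. From the representation \eqref{eq:tautau}, $\bar h(t,\xi)=h(t,\xi)-\sum_{j:\tau_j(\xi)\le t}l_j(\xi)$ with $l_j\ge 0$, so pushing forward under $y$ gives $\mu(t)=y_\#(h\,d\xi)-\sum_j y_\#(l_j(\xi)\chi_{\{\tau_j\le t\}}\,d\xi)$. The part coming from $h$ is continuous in time and, by $h_t=2(U^2-P)U_\xi$, produces exactly the equality $(u^2+\nu)_t+(u(u^2+\nu))_x=(u^3-2Pu)_x$ in distributions by the same change-of-variable argument as above. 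Subtracting the non-negative dissipation $F(t)=\nu(t,\Real)-\mu(t,\Real)-\nu(0,\Real)+\mu(0,\Real)=\sum_{j:\tau_j\le t}\int_\Real l_j\,d\xi$ yields the inequality \eqref{eq:weak5}. Monotonicity of $F$ is then manifest from $l_j\ge 0$, semi-continuity follows because the $\tau_j(\xi)$ are separated (Corollary~\ref{cor:tau}) and the only jumps of $F$ are upward, and the interpretation of $F(t)$ as the total dissipated energy comes from the conservation of $\int U^2 y_\xi+h\,d\xi=\Sigma(0)$ established in Lemma~\ref{lem:globest}.

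The main obstacle will be the rigorous justification of the distributional computations in the presence of the jumps in $\bar h$: one must show that the measures $y_\#(l_j(\xi)\chi_{\{\tau_j\le t\}}\,d\xi)$ depend measurably on $t$, that their time-derivatives contribute only non-negative atoms (ensuring the inequality rather than equality in \eqref{eq:weak5}), and that the interchange of the sum $\sum_j$ with $y_\#$ and the integration against $\phi$ is legitimate. This is secured by Corollary~\ref{cor:tau}, which guarantees that on any bounded time interval each $\xi$ experiences only finitely many breakings, together with the uniform-in-time $L^1$ bound on $h$ from Lemma~\ref{lem:globest}, which dominates $\sum_j l_j\le h_0$.
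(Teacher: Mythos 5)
Your proposal follows essentially the same route as the paper: the paper likewise defers the semigroup and weak-solution parts to \cite{GHR5} and proves \eqref{eq:weak5} by the Lagrangian change of variables combined with the splitting $h(t,\xi)=\bar h(t,\xi)+\sum_{j}\chi_{\{\tau_j(\xi)\le t\}}(\xi)l_j(\xi)$, so that integration by parts in time gives the continuous ($\nu$-)part as an exact balance law and the breaking times contribute the nonnegative atoms $\int_\Real\sum_j\phi(\tau_j(\xi),y(\tau_j(\xi),\xi))l_j(\xi)\,d\xi$, which is precisely the inequality. One small slip in your domination remark: the correct pointwise bound is $\sum_{j:\tau_j(\xi)\le t}l_j(\xi)\le h(t,\xi)$ (whose $L^1$ norm is controlled uniformly via $\Sigma(0)$ in Lemma~\ref{lem:globest}), not $\le h_0(\xi)$, since $h$ need not be nonincreasing in time.
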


\begin{proof}
  This proof follows essentially the same lines as the one in \cite{GHR5} and of the proof of
  \eqref{eq:weak5}, which we present here. Let $\phi\in C^\infty_0((0,\infty)\times\Real)$ such that
  $\phi(t,x)\geq 0$. Since $U(t,\xi)$ is continuous with respect to time, we have
\begin{align}\label{eq:iint1}
 &\iint_{\Real_+\times\Real}u^2\phi_t(t,x)dx dt \\ \nn 
 & \qquad =-\iint_{\Real_+\times\Real} (U^2(t,\xi)U_\xi(t,\xi)-2U(t,\xi)Q(t,\xi)y_\xi(t,\xi))\phi(t,y(t,\xi))d\xi dt\\ \nn 
    & \qquad \quad-\iint_{\Real_+\times\Real} U^3(t,\xi)\phi_\xi(t,y(t,\xi))d\xi dt 
\end{align}

The measure $\mu$ in Eulerian coordinates corresponds to the function $\bar h(t,\xi)$, which is discontinuous with respect to time, in Lagrangian coordinates. Thus it is important for the following calculations to keep in mind that $\bar h(t,\xi)$ can be rewritten as 
$h(t,\xi)=\bar h(t,\xi)+\sum_{j=0}^\infty \chi_{\{\tau_j(\xi)\leq t\}}(\xi)l_j(\xi)$, where $h(t,\xi)$ is continuous with respect to time and corresponds to the measure $\nu$ in Eulerian coordinates by Definition~\ref{th:umudef} and $\tau_0(\xi)=0$. Thus
  \begin{align}\label{eq:iint2}
    & \iint_{\Real_+\times\Real} \phi_t(t,x)d\mu(t,x)dt= \iint_{\Real_+\times\Real}\phi_t(t,y(t,\xi))\bar h(t,\xi)d\xi dt\\ \nn 
    & \qquad =\iint_{\Real_+\times\Real} [(\phi(t,y(t,\xi)))_t-\phi_x(t,y(t,\xi))y_t(t,\xi)]\bar h(t,\xi)d\xi dt\\ \nn 
    & \qquad =\iint_{\Real_+\times \Real}(\phi(t,y(t,\xi)))_t\bar h(t,\xi)dt d\xi-\iint_{\Real_+\times\Real}U(t,\xi)\bar h(t,\xi)\phi_x(t,y(t,\xi))d\xi dt\\ \nn
    & \qquad =\iint_{\Real_+\times\Real}(\phi(t,y(t,\xi)))_th(t,\xi)dt d\xi -\iint_{\Real_+\times\Real}(\phi(t,y(t,\xi)))_t\sum_{j=0}^\infty \chi_{\{\tau_j(\xi)\leq t\}}(\xi) l_j(\xi)dt d\xi\\ \nn 
    & \qquad\quad -\iint_{\Real_+\times\Real}U(t,\xi)\bar h(t,\xi)\phi_x(t,y(t,\xi))d\xi dt\\ \nn 
    & \qquad =-\iint_{\Real_+\times\Real} 2(U^2(t,\xi)-P(t,\xi))U_\xi(t,\xi)\phi(t,y(t,\xi))d\xi dt\\ \nn 
    & \qquad \quad-\iint_{\Real_+\times\Real}U(t,\xi)\bar h(t,\xi)\phi_x(t,y(t,\xi))d\xi dt\\ \nn 
    & \qquad \quad +\int_{\Real}\sum_{j=1}^\infty \phi(\tau_j(\xi),y(\tau_j(\xi),\xi))l_j(\xi)d\xi.
  \end{align}
Note that the second integral in the fourth line is well defined since, by construction, $0\leq\sum_{j=0}^\infty \chi_{\{\tau_j(\xi)\leq t\}}(\xi)l_j(\xi)\leq h(t,\xi)$ and therefore the integrand belongs to $L^1(\Real)$ for each fixed $t$. In addition, $\phi\in C^\infty_0((0,\infty)\times\Real)$ and hence this integral exists. Similar conclusions hold for the integral with respect to $\xi$ in the last line.

Observe that we have 
\begin{align}\label{eq:iint3}
 \iint_{\Real_+\times\Real}  (2PU_\xi+&2UQy_\xi -3U^2U_\xi) (t,\xi)\phi(t,y(t,\xi))d\xi dt \\ \nn
& =-\iint_{\Real_+\times\Real} (2PU-U^3)(t,\xi)\phi_\xi(t,y(t,\xi))d\xi dt.
\end{align}
  Gathering \eqref{eq:iint1}, \eqref{eq:iint2} and applying \eqref{eq:iint3} yields
\begin{align*}
  \iint_{\Real_+\times\Real}&u^2\phi_t(t,x)dx dt +\iint_{\Real_+\times\Real} \phi_t(t,x)d\mu(t,x)dt\\ \nn 
 & =-\iint_{\Real_+\times\Real} 2P(t,x)u(t,x)\phi_x(t,x)dtdx -\iint_{\Real_+\times\Real} u(t,x)\phi_x(t,x) d\mu(t,x) dt\\ \nn 
    & \quad +\int_{\Real}\sum_{j=0}^\infty \phi(\tau_j(\xi),y(\tau_j(\xi),\xi))l_j(\xi)d\xi.
\end{align*}
  The integral in the last line is finite and positive, hence we proved \eqref{eq:weak5}.
\end{proof}

We have now shown that this new solution concept yields global weak solutions of the 2CH system. However, there is one more question which is of great interest.   Recall that $(u,\rho)$ satisfies the same equation, namely \eqref{eq:chsys2},  independently of the value $\alpha$, yet we have carefully constructed a solution for a given 
$\alpha$. One can turn this around and ask: Given a solution $(u,\rho)$, can we determine $\alpha$?
The answer is contained in the following theorem. 

\begin{theorem} \label{thm:find_alpha}
Let $(u,\rho,\mu,\nu)$ be a weak solution of the 2CH system.  The limits from the \emph{future} and the \emph{past} of the measure $\mu$ exist for all times and we denote them as follows
  \begin{equation*}
    \mu^-(t)=\lim_{t'\uparrow t} \mu(t')\quad\text{ and }\quad  \mu^+(t)=\lim_{t'\downarrow t} \mu(t').
  \end{equation*}
  We have that the measure $\mu$ is continuous backward in time, that is,
  \begin{equation*}
    \mu^+ = \mu
  \end{equation*}
  for all $t$.  In the other direction, going forward in time, we have that
  \begin{equation}
    \label{eq:decompmu}
    \mu = \muac^- + (1-\alpha)\mus^-,
  \end{equation} 
  for all $t$, that is,
  \begin{equation*}
    \muac = \muac^-\quad\text{ and }\quad\mus = (1-\alpha)\mus^-.
  \end{equation*}
  Moreover, we have that, for almost every time $t$,
  \begin{equation}
    \label{eq:consenergal}
    \mu^+(t) = \mu^-(t) = \mu(t) = \muac(t).
  \end{equation}
\end{theorem}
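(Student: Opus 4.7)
The plan is to work in Lagrangian coordinates via the pushforward representation $\mu(t) = y(t,\cdot)_\#\bigl(\bar h(t,\cdot)\,d\xi\bigr)$ from Definition \ref{th:umudef} and to track the pointwise-in-$\xi$ behavior of $\bar h(\cdot,\xi)$ as a function of time. From \eqref{eq:tautau}, Corollary \ref{cor:tau} and \eqref{eq:sysdiss-4}, for each fixed $\xi$ the function $\bar h(\cdot,\xi)$ is right-continuous with left-limits at every time: $\bar h(t+0,\xi)=\bar h(t,\xi)$ always, while $\bar h(t-0,\xi)=\bar h(t,\xi)$ unless $t=\tau_j(\xi)$ for some $j$, in which case $\bar h(t-0,\xi)=(1-\alpha)^{-1}\bar h(t,\xi)$. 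For any test function $\phi\in C_c(\Real)$ with $\supp\phi\subset[-R,R]$, the integrand in $\int\phi\,d\mu(t') = \int\phi(y(t',\xi))\bar h(t',\xi)\,d\xi$ is dominated uniformly in $t'\in[0,T]$ by the $L^1$ function $\norm{\phi}_{L^\infty}\norm{h}_{L^\infty_T L^\infty}\chi_{[-R-M,R+M]}(\xi)$ with $M=\norm{\zeta}_{L^\infty_T L^\infty}$. Dominated convergence yields weak-$*$ convergence $\mu(t')\to\mu^\pm(t):=y(t,\cdot)_\#\bigl(\bar h(t\pm 0,\cdot)\,d\xi\bigr)$ as $t'\to t^\pm$, proving the existence of the limits.

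The identity $\mu^+=\mu$ is immediate from $\bar h(t+0,\xi)=\bar h(t,\xi)$. For the decomposition, set $A_t=\{\xi\in\Real : \tau_j(\xi)=t \text{ for some } j\}$; then
\begin{equation*}
\bar h(t,\xi) = \bigl(1-\alpha\chi_{A_t}(\xi)\bigr)\,\bar h(t-0,\xi)
\end{equation*}
for every $\xi$, so $\mu(t)=\mu^-(t)-\alpha\,y(t,\cdot)_\#\bigl(\chi_{A_t}\bar h(t-0,\cdot)\,d\xi\bigr)$. Since the singular part of a pushforward $y_\#(g\,d\xi)$ under a monotone Lipschitz map is $y_\#(g\chi_{\{y_\xi=0\}}\,d\xi)$ and since $A_t\subset\{y_\xi(t,\cdot)=0\}$, the decomposition reduces to showing that $\{\xi:y_\xi(t,\xi)=0,\ \bar h(t-0,\xi)>0\}\subset A_t$ modulo null sets. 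Suppose $\xi_0\notin A_t$ satisfies $y_\xi(t,\xi_0)=0$. In the dissipative limit $\alpha=1$, the only possibility is $\tau_1(\xi_0)<t$, which forces $\bar h(t-0,\xi_0)=0$; in the case $\alpha\in[0,1)$, Corollary \ref{cor:tau} provides a nonempty interval $(\tau_k(\xi_0),\tau_{k+1}(\xi_0))$ on which $y_\xi>0$, and any $t$ outside the countable set $\{\tau_j(\xi_0)\}$ lies in such an interval, forcing $y_\xi(t,\xi_0)>0$, a contradiction. Hence the correction term equals $\alpha\mus^-(t)$, delivering $\mu = \muac^-+(1-\alpha)\mus^-$ and thus $\muac=\muac^-$, $\mus=(1-\alpha)\mus^-$.

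For the almost-everywhere assertion \eqref{eq:consenergal}, define $B=\{(t,\xi)\in[0,T]\times\Real:y_\xi(t,\xi)=0 \text{ and } \bar h(t,\xi)>0\}$. For each fixed $\xi$, the slice $B_\xi$ is either contained in the countable non-accumulating set $\{\tau_j(\xi)\}_j$ (when $\alpha\in[0,1)$) or empty (when $\alpha=1$, since then $\bar h\equiv 0$ on $[\tau_1(\xi),\infty)$); in either case $B_\xi$ has one-dimensional Lebesgue measure zero, and Fubini yields that the $t$-slice has Lebesgue measure zero for almost every $t$. The singular part of $\mu(t)$ consequently vanishes for such $t$, and the same reasoning applied to $\bar h(t\pm 0,\cdot)$ gives $\mu^\pm(t)=\mu(t)=\muac(t)$ for almost every $t$. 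The main obstacle is the middle step: pinning down which $\xi$'s contribute to the singular part of $\mu^-$ and confirming that jumps can only arise at particles currently undergoing wave breaking. This relies essentially on Corollary \ref{cor:tau} and the strict positivity of $y_\xi$ between consecutive wave-breaking times.
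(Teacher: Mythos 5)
Your argument is correct and follows essentially the same route as the paper's proof: pointwise one-sided limits of $\bar h(\cdot,\xi)$ combined with dominated convergence give the weak-$*$ limits $\mu^\pm(t)=y(t,\cdot)_\#(\bar h(t\pm0,\cdot)\,d\xi)$, the split over $\{y_\xi(t,\cdot)>0\}$ versus $\{y_\xi(t,\cdot)=0\}$ gives \eqref{eq:decompmu}, and Fubini together with Corollary \ref{cor:tau} gives \eqref{eq:consenergal}. The one step you invoke as a black box---that the singular part of $y_\#(g\,d\xi)$ is exactly $y_\#\bigl(g\chi_{\{y_\xi=0\}}\,d\xi\bigr)$---is precisely what the paper proves explicitly (namely $\meas\bigl(y(t,\{y_\xi=0\})\bigr)=0$ and, via the truncation sets $K_M$, absolute continuity of the push-forward from $\{y_\xi>0\}$), and that fact is indeed correct, so your proposal matches the paper's argument in substance.
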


\begin{proof} We prove the theorem first for $\alpha<1$. Given $t\in\Real_+$, we define
  \begin{equation}
    \label{eq:defbarhminus}
 {\tilde{\bar h}}(t,\xi)  =
    \begin{cases}
      \frac{1}{1-\alpha}\bar h(t,\xi),&\text{ if }y_{\xi}(t,\xi)=0,\\
      \bar h (t,\xi),&\text{ otherwise.}
    \end{cases}
  \end{equation}
  We claim that, for almost every $\xi$,
  \begin{align}
    \label{eq:barhlimits}
   \bar h(t-0,\xi) &=  \lim_{t'\uparrow t}\bar h(t',\xi) = {\tilde{\bar h}}(t,\xi) &\text{and}&& \lim_{t'\downarrow t}\bar h(t',\xi) &= \bar h(t,\xi).
  \end{align}
  Indeed, if $y_\xi(t,\xi)>0$ then $\tau_n(\xi)<t<\tau_{n+1}(\xi)$ and $\bar h(t',\xi)$ is differentiable in $t'$. It is therefore continuous and we have
  \begin{equation*}
    \lim_{t'\uparrow t}\bar h(t',\xi) = \lim_{t'\downarrow t}\bar h(t',\xi) = \bar h(t,\xi).
  \end{equation*}
  If $y_\xi(t,\xi)=0$, there exists $n$ such that $t=\tau_n(\xi)$. In $[\tau_{n-1}(\xi),\tau_{n}(\xi))$, the function $t'\mapsto \bar h(t',\xi)$ satisfies the ordinary differential equation \eqref{eq:sysdiss-5}. Hence, $\lim_{t'\uparrow t}\bar h(t',\xi)$ exists and, by the jump conditions \eqref{eq:defderatjump}, \eqref{eq:defderatjumpA}, we have 
  $\ {\tilde{\bar h}}(t-0,\xi) = \lim_{t'\uparrow t}\bar h(t',\xi)=\frac{1}{1-\alpha}\bar
  h(\tau_n(\xi),\xi)$. For $t'\in[\tau_n(\xi), \tau_{n+1}(\xi))$, $\bar h(t',\xi)$ also satisfies \eqref{eq:sysdiss-5} and we then directly have $\lim_{t'\downarrow t}\bar h(t',\xi)=\bar
  h(\tau_n(\xi),\xi)$. This concludes the proof of \eqref{eq:barhlimits}. Let us now define the measure $\mu^{-}$ as
  \begin{equation}
    \label{eq:defmuminus}
    \mu^{-}(t)=y_{\#}(\bar h(t-0)\,d\xi).
  \end{equation}
  We claim that
  \begin{equation}
    \label{eq:limsmu}
    \lim_{t'\uparrow t}\mu(t') = \mu^{-}(t)\quad\text{ and }\quad  \lim_{t'\downarrow t}\mu(t') = \mu(t).
  \end{equation}
  Here, we use the weak star topology for the measure. For any continuous function $\phi\in C(\Real)$ with compact support, we have
 \begin{equation}
    \label{eq:phidmu}
    \int_\Real \phi(x)\,d\mu(t',x)=\int_\Real\phi(y(t',\xi))\,\bar h(t',\xi)\,d\xi.
  \end{equation}
  For almost every given $\xi$, we have that $\lim_{t'\to t} y(t',\xi) = y(t,\xi)$, and, from
  \eqref{eq:barhlimits}, we have $\lim_{t'\uparrow t}\bar h(t',\xi)= {\tilde{\bar h}}(t,\xi)$. Hence, the
  integrand in \eqref{eq:phidmu} tends to $\phi(y(t,\xi))\, {\tilde{\bar h}}(t,\xi)$ when $t'$ converges to
  $t$ from below. Moreover, since $\norm{y(t,\dott) - \id}_{L^\infty}$ is bounded and $\phi$ has compact
  support, we can restrict the integration domain in \eqref{eq:phidmu} to a bounded domain. Then,
  the first proposition in \eqref{eq:limsmu} follows from the Lebesgue dominated convergence theorem
  applied to \eqref{eq:phidmu} by letting $t'$ tend to $t$. The second proposition is proved in a
  similar way. Let us define 
  \begin{equation}
    \label{eq:defsetB}
    B=\{\xi\in\Real\mid y_\xi(t,\xi)>0\}    
  \end{equation}
  and $A=y(t,B)$. Let us prove that
  \begin{equation}
    \label{eq:defacspart}  
    \muac^{-} (t)= \mu^{-}|_{A} (t)\quad\text{ and }\quad\mus^{-} (t)= \mu^{-}|_{A^c}(t).
  \end{equation}
  Here $\mu|_{A}(t)$ denotes the restriction of $\mu(t)$ to $A$, that is, $\mu|_{A}(t,E) =
  \mu(t,E\cap A)$ for any Borel set $E$. We have $\meas(A^c)=0$. Indeed, since $y$ is surjective,
  $A^c\subset y(t,B^c)$ and $\meas(y(t,B^c))=\int_{B^c}y_\xi(t,\xi)\,d\xi = 0$, from the definition of
  $B$. Let us prove that $\mu^-|_{A}(t)$ is absolutely continuous. We consider a set $E$ of
  zero measure. 
  We have
  \begin{align*}
    \mu^{-}|_{A}(t,E) &= \int_{y^{-1}(t,A\cap E)}\bar h(t,\xi)\,d\xi
  \end{align*}
  We define $K_M = \{\xi\in\Real\mid \frac{\bar h(t,\xi)}{y_\xi(t,\xi)} \leq M\}$. Let us prove that
  $y_\xi(t,\xi)>0$ for almost every $\xi\in y^{-1}(t,A)$. Assume the opposite, then, since $y$ is
  surjective, there exist $\bar\xi\in B^{c}$ and $\xi\in B$ such that $y(t,\xi)=y(t,\bar \xi)$. Since
  $y$ is increasing, it implies $y_\xi(t,\xi)=y_\xi(t,\bar\xi)=0$, which is a contradiction to the fact that
  $\xi\in B$. Thus, the indicator function of the set $K_M$, which we denote $\chi_{K_M}$, converges
  to one, almost everywhere in $y^{-1}(t,A\cap E)$, as $M$ tends to infinity. We have
  \begin{equation*}
    \int_{y^{-1}(t,A\cap E)}\chi_{K_M}(\xi)\bar h(t,\xi)\,d\xi \leq M\int_{y^{-1}(t,A\cap E)} y_\xi(t,\xi)\,d\xi=M\meas(A\cap E)=0,
  \end{equation*}
  and, by the monotone convergence theorem, it follows that $\int_{y^{-1}(t,A\cap E)}\bar
  h(t,\xi)\,d\xi=0$. Let us now prove \eqref{eq:decompmu}. We have, for any Borel set $E$,
  \begin{align}
    \label{eq:mutE}
    \mu(t,E)&=\int_{y^{-1}(t,E)}\bar h(t,\xi)\,d\xi\\
    &=\int_{y^{-1}(t,E)\cap B}\bar h(t,\xi)\,d\xi + \int_{y^{-1}(t,E)\cap B^c}\bar h(t,\xi)\,d\xi \notag\\
    &=\int_{y^{-1}(t,E\cap A)} {\tilde{\bar h}}(t,\xi)\,d\xi + \int_{y^{-1}(t,E\cap A^c)}(1-\alpha) {\tilde{\bar h}}(t,\xi)\,d\xi,\notag
  \end{align}
  by the definition \eqref{eq:defbarhminus} of $\bar h(t-0)$ and the fact that $y^{-1}(t,A)=B$. Then,
  \begin{align*}
    \mu(t,E)&=\int_{y^{-1}(t,E\cap A)} {\tilde{\bar h}}(t,\xi)\,d\xi + (1-\alpha)\int_{y^{-1}(t,E\cap A^c)}\ {\tilde{\bar h}}(t,\xi)\,d\xi\\
    &=\muac^{-}(t,E) + (1-\alpha)\mus^{-}(t,E),
  \end{align*}
  by \eqref{eq:defacspart} and \eqref{eq:defmuminus}. This concludes the proof of the theorem for $\alpha<1$. In the case where $\alpha=1$, the definition \eqref{eq:defbarhminus} cannot be
  used. However, the limit $\lim_{t'\uparrow t}\bar h(t',\xi)$ still exists, and we denote it by $ {\tilde{\bar h}}(t,\xi)$. The rest of the proof is the same up to \eqref{eq:mutE} which is replaced by
  \begin{align}
    \label{eq:mutE2}
    \mu(t,E)&=\int_{y^{-1}(t,E)}\bar h(t,\xi)\,d\xi\\
    &=\int_{y^{-1}(t,E)\cap B}\bar h(t,\xi)\,d\xi + \int_{y^{-1}(t,E)\cap B^c}\bar h(t,\xi)\,d\xi \notag\\
    &=\int_{y^{-1}(t,E\cap A)} {\tilde{\bar h}}(t,\xi)\,d\xi, \notag
  \end{align}
  because, in the fully dissipative case $\alpha=1$, we have $\bar h(t,\xi) = 0$ when
  $y_\xi(t,\xi)=0$. Then, we obtain that $\mu(t,E)=\muac^{-}(t,E)$. We turn to the proof of
  \eqref{eq:consenergal}. Let us introduce the set  
  \begin{multline*}
    \mathcal A = \{t\in\Real_+\ | \text{ for almost every $\xi$, either} \ y_\xi(t,\xi) > 0 \text{ or }\\
    (y_\xi(t,\xi) = 0 \text{ and }\bar h(t-0, \xi) = 0)\}.
  \end{multline*}
  For $t\in\mathcal{A}$,
  using \eqref{eq:defacspart}, we get
  \begin{align*}
    \mus^{-}(t)(\Real) &= \int_{y^{-1}(y(t,B)^c)} \bar h(t-0,\xi)\,d\xi  \\ &= \int_{{y^{-1}(y(t,B)^c)}\cap B} \bar h(t-0,\xi)\,d\xi \\ &\leq \int_{B^c\cap B} \bar h(t-0,\xi)\,d\xi = 0.
  \end{align*}
  Thus, \eqref{eq:consenergal} will be proved once we have proved that $\mathcal{A}$ has full
  measure. For a given $\xi\in\Real$, we know from Corollary \ref{cor:tau} that the collision times
  do not accumulate. For $\alpha<1$, it means that $y_\xi(t,\xi)=0$ only at isolated times $t$. For
  $\alpha=1$, assuming a collusion occurs at the point $\xi$, we have $y_\xi(t,\xi)>0$ for
  $t<\tau_1(\xi)$, $y_\xi(t,\xi)=0$ for $t\geq\tau_1(\xi)$ but $\bar h(t,\xi)=0$ for all
  $t\geq\tau_1(\xi)$. Hence, in both cases, we have
  \begin{equation*}
    \meas(\{t\in\Real_+\ |\ y_\xi(t,\xi) = 0 \text{ and } \bar h(t-0,\xi) > 0\}) = 0.
  \end{equation*}
  Using Fubini theorem, we get
  \begin{multline*}
    \int_{\Real^+}\meas(\{\xi \in \Real \ |\ y_\xi(t,\xi) = 0 \text{ and } \bar h(t-0,\xi) >
    0\})\,dt \\ = \int_\Real \meas(\{t\in\Real_+\ |\ y_\xi(t,\xi)=0\text{ and } \bar h(t-0,\xi) > 0\})\,d\xi =0,
  \end{multline*}
  so that $\meas(\{\xi \in \Real \ |\ y_\xi(t,\xi) = 0 \text{ and } \bar h(t-0,\xi) > 0\})=0$ for
  almost every time. It follows that $\mathcal{A}$ has full measure, which concludes the proof of
  \eqref{eq:consenergal}.
\end{proof}

\section{The peakon-antipeakon example} \label{sec:AP}

The most well-known explicit solution, and key example for the dichotomy between conservative and dissipative solutions, as well as  a source for intuition in the general case, is the peakon-antipeakon solution. We here present the detailed analysis in this paper applied to this example.  See, e.g.,  
\cite{BealsSattingerSzm:01,HolRay:06b,HolRay:07B,wahlen}.

\begin{figure}\centering
      \includegraphics[width=.45\textwidth]{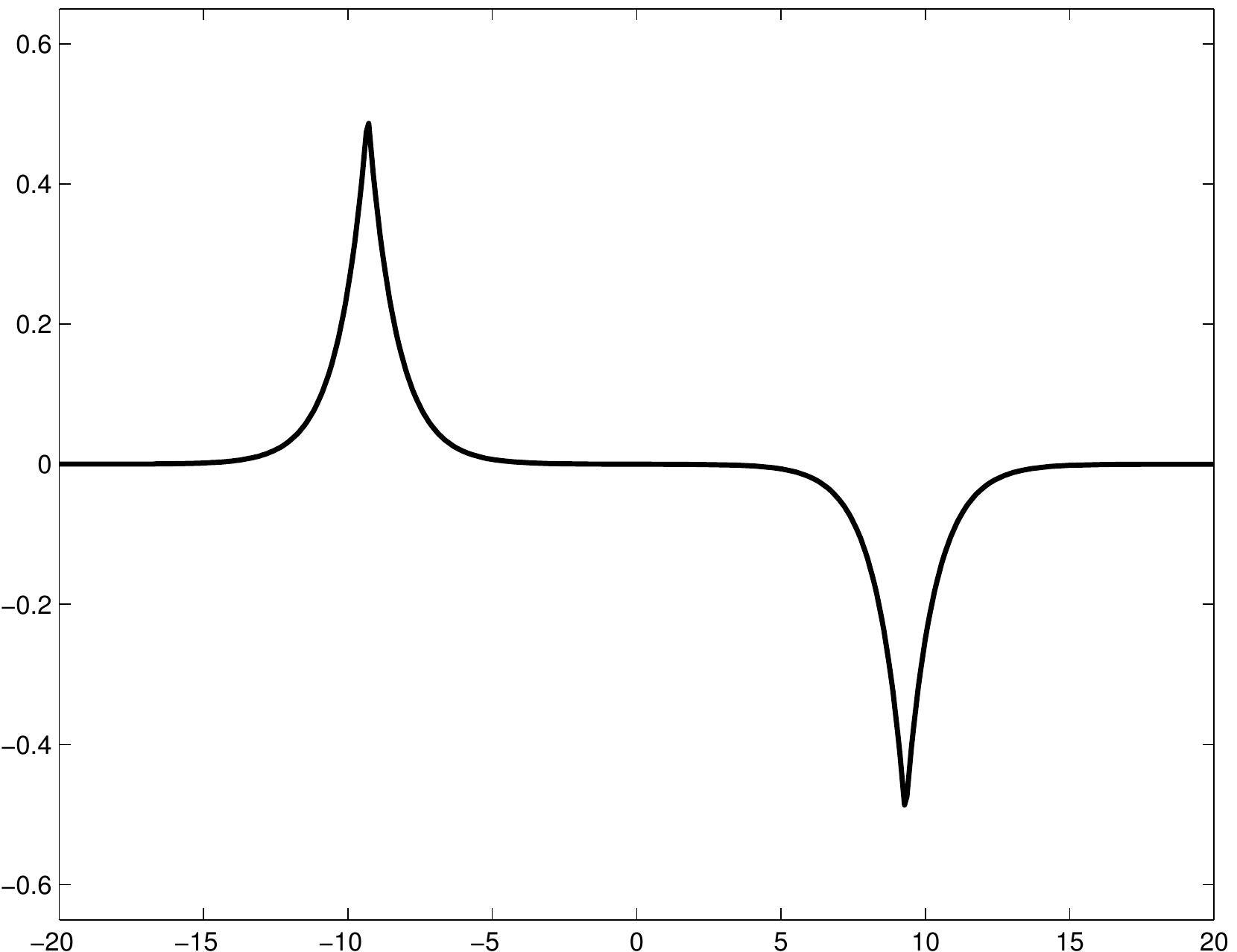}
       \includegraphics[width=.45\textwidth]{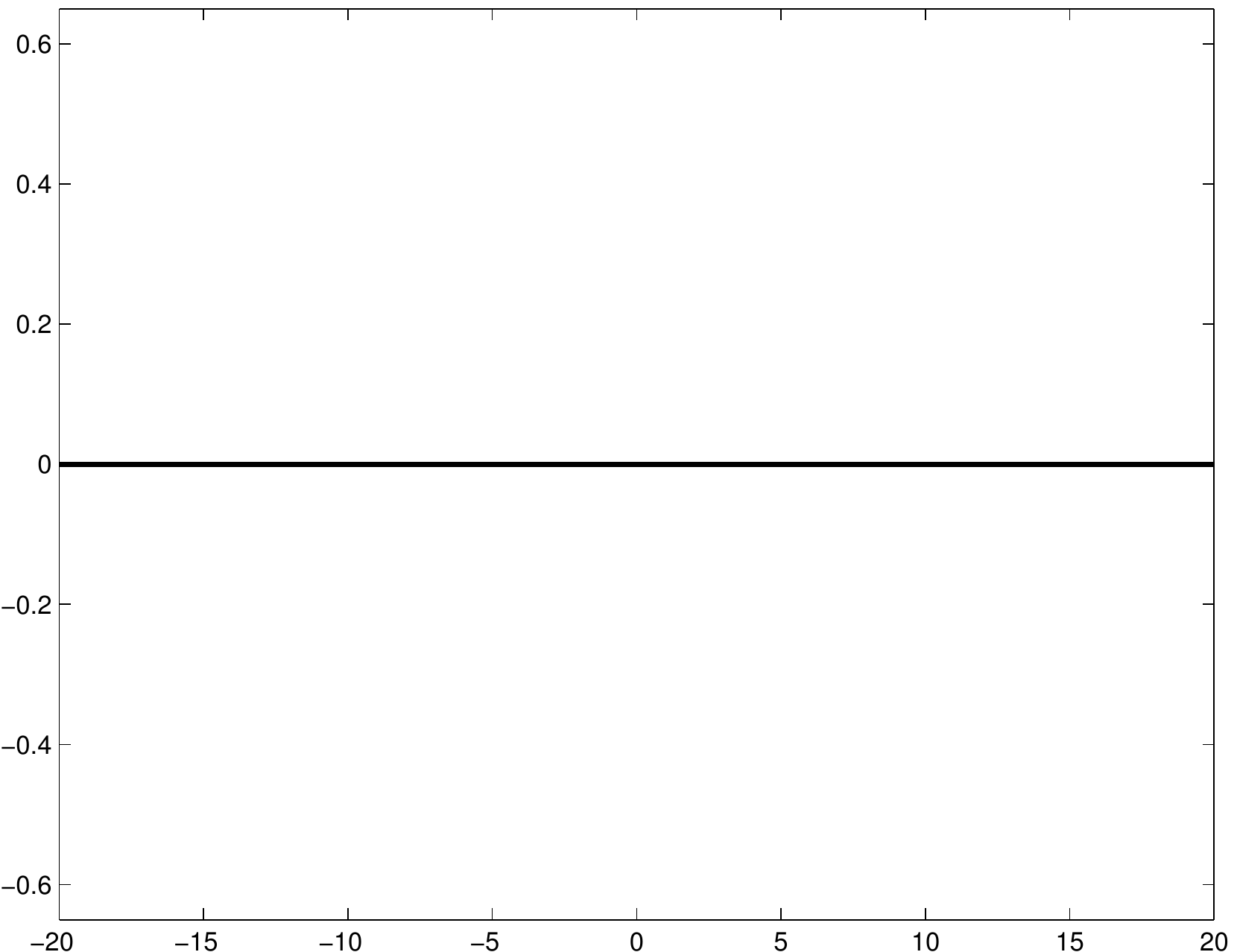}
      \caption{The peakon-antipeakon solution at times $0$ and $t_0$ for three different values of $\alpha$. (The curves coincide.)} \label{fig:PA}
    \end{figure}

Consider the initial data:
\begin{equation}
u(0,x)= \begin{cases} 
\sgn(x)A(0)e^{-\abs{x}}, & \text{for $\abs{x}>\gamma(0)$},\\
B(0)\sinh x, &
\text{for $\abs{x}\le\gamma(0)$}, 
\end{cases} 
\end{equation}
where we have introduced\footnote{Here $\sinh^{-1}$ denotes the multiplicative inverse of  $\sinh$. Similar conventions apply to $\cosh^{-1}$ and $\tanh^{-1}$.}
\begin{equation}
A(t)= \frac{E}{2}\sinh(\frac{E}{2}(t-t_0)), \quad 
B(t)=E\sinh^{-1}(\frac{E}{2}(t-t_0)), \quad 
\gamma(t)=\ln\cosh(\frac{E}{2}(t-t_0)). 
\end{equation}
Here $t_0>0$ is a given time where the wave breaking will occur.  For $t<t_0$ the function 
\begin{equation}
u(t,x)= \begin{cases} 
\sgn(x)A(t)e^{-\abs{x}}, & \text{for $\abs{x}>\gamma(t)$},\\
B(t)\sinh x, &
\text{for $\abs{x}\le\gamma(t)$}, 
\end{cases} 
\end{equation}
will be the peakon-antipeakon solution of the Camassa--Holm equation \eqref{eq:chsys2A} with $\kappa=0$ and $\rho$ identically zero, see 
\cite[ Thm.~4.1, Ex.~4.2 (ii)]{HolRay:06b}.  Define the two Radon measures by
\begin{equation}
\mu(t)=\nu(t)= u_x^2(t,x)\dx, \quad 
u_x(t,x)= \begin{cases} 
-A(t)e^{-\abs{x}}, & \text{for $\abs{x}>\gamma(t)$},\\
B(t)\cosh x, &
\text{for $\abs{x}\le\gamma(t)$}. 
\end{cases}
\end{equation} 
Observe that
\begin{equation}\label{eq:energyPA}
\int_\Real (u^2(t,x)+u_x^2(t,x)) \, dx= E^2 \text{  for all $t< t_0$.}
\end{equation} 
At $t=t_0$ we see that $A(t_0)=\gamma(t_0)=0$, and thus $u(t_0,x)=u_x(t_0,x)=0$ almost everywhere, while $\mu(t), \nu(t)\to E^2\delta_0$ as $t\uparrow t_0$. Let namely $M\subset \Real$ be a measurable set. Then
\begin{equation}
\mu(t)(M)=\int_M u_x^2(t,x) \, dx\underset{t\uparrow t_0}{\to} 
\begin{cases} 
E^2, &\text{for $0\in M$}, \\
0, & \text{for $0\not\in M$},
\end{cases}
\end{equation}
since $\gamma(t)\to 0$ and $u_x(t,x)\to 0$ ($x\neq0$) as $t\uparrow t_0$, and
\begin{equation}
\begin{aligned}
\int_{-\gamma(t)}^{\gamma(t)}u_x^2(t,x)\, dx&=B^2(t)\int_{-\gamma(t)}^{\gamma(t)}\cosh^2(x) \, dx \\
&= B^2(t)\big(\gamma(t)+\frac12\sinh(2\gamma(t)) \big)\underset{t\uparrow t_0}{\to} E^2. 
\end{aligned}
\end{equation}

Next we turn to the Lagrangian variables, which are solutions of the following system of ordinary differential equations (cf.~\eqref{eq:sysdiss}) for $t<t_0$, 
\begin{subequations}\label{eq:sysdiss1PA}
  \begin{align}
    y_t& =U , \\
     U_t&=-Q,\\
    y_{t,\xi}&=U_\xi,\\
    U_{t,\xi}&= \frac12 h+(U^2-P)y_\xi,\\
    h_t& = 2(U^2-P)U_\xi, \\
    \bar h_t&=h_t,
  \end{align}
\end{subequations}
where $P$ and $Q$ are given by  \eqref{eq:Plag1} and \eqref{eq:Qlag1}, respectively.  However, this system is difficult to solve directly, even in the case of a peakon-antipeakon solution. The initial data have to be judiciously chosen, and we will return to this shortly.  Instead of solving \eqref{eq:sysdiss1PA} directly, we will determine the solution by using the connection between Eulerian and Lagrangian variables directly. The key relations are
\begin{equation} \label{eq_lagrange_key}
y_t=u\circ y, \quad U=u\circ y, \quad h=u_x^2\circ y\, y_\xi.
\end{equation}
We have to determine the characteristics initially (here denoted by $\bar y_0$), given by   \eqref{eq:Ldef1}, that is,  
$ \bar y_0(\xi)=\sup\left\{y\mid \nu((-\infty,y))+y<\xi\right\}$ (we write  $\bar y_0$ rather than $y_0$ as we will modify it shortly). In this case, where the measure $\nu$ is absolutely continuous, we find that the characteristics is given by
\begin{equation}\label{eq:initial}
 \int_{-\infty}^{\bar y_0(\xi)} u_x^2(0,x)dx +\bar y_0(\xi)=\xi,
\end{equation} 
which appears to be difficult to solve, even in this case. 
Fortunately, its derivative is straightforward:
\begin{equation}
\begin{aligned}
\bar y_0'(\xi)&=\frac{1}{1+u_x^2(0,\bar y_0(\xi))}\\
&=\begin{cases}
\big(1+A^2(0)e^{-\sgn(\xi)2\bar y_0(\xi)}\big)^{-1}, & \text{for $\xi\not\in [\xi_-,\xi_+]$},\\
\big(1+B^2(0)\cosh^2(\bar y_0(\xi))\big)^{-1}, & \text{for $\xi\in [\xi_-,\xi_+]$},
\end{cases}
\end{aligned}
\end{equation} 
where we introduced $\xi_\pm$, the solution of $\bar y_0(\xi_\pm)=\pm\gamma(0)$.
For reasons that will become clear later, we will benefit from  having characteristics that satisfy $y_0(\pm\gamma(0))=\pm\gamma(0)$, which is not automatically satisfied by \eqref{eq:initial}. We use the freedom given to us by relabeling to modify $\bar y_0$. To that end define
\begin{equation}
f(z)=\int_{-\infty}^{z} u_x^2(0,x)dx +z.
\end{equation}
Then $f$ is a relabeling function in the sense of Definition \ref{def:Grelab}. Observe that with this definition $\xi_\pm=f(\pm\gamma(0))$ and $f'(z)=u_x^2(0,z) +1$. Introduce
\begin{equation}
 y_0(z)=\bar y_0(f(z)),
\end{equation}
which implies
\begin{equation}
 y_0(\pm\gamma(0))=\bar y_0(f(\pm\gamma(0)))=\bar y_0(\xi_\pm)=\pm\gamma(0).
\end{equation}
Hence
\begin{equation}
\begin{aligned}
y_0'(\xi)&=\bar y'_0\circ f(\xi) f'(\xi)  
=1.
\end{aligned}
\end{equation} 
Thus the relabeled initial characteristics is simply $y_0(\xi)=\xi$. Clearly, we could have chosen this function immediately, and the above argument shows that one can always use the identity as the initial characteristics when the initial data contains no singular part.  However, the above argument illustrates the possible use of relabeling. 

The Lagrangian variables are then given, using \eqref{eq_lagrange_key}  for $t<t_0$, by
\begin{equation} \label{eq:lagrangeAP}
\begin{aligned}
y(t,\xi)&= \begin{cases}
\xi+\sgn(\xi)\ln\big(1+(\cosh(\frac{E}{2}(t-t_0))-\cosh(\frac{E}{2}t_0)) e^{-\abs{\xi}} \big), 
& \text{for $\abs{\xi}\ge \gamma(0)$}, \\
2\artanh\Big(\tanh(\frac{\xi}{2}) \frac{\tanh^2(\frac{E}{4}(t-t_0))}{\tanh^2(-\frac{E}{4}t_0)}\Big), 
& \text{for $\abs{\xi}\le\gamma(0)$}, 
\end{cases} \\[3mm]
U(t,\xi)&=
 \begin{cases}
\begin{matrix}\sgn(\xi)A(t) e^{-\abs{\xi}}\\
\times\big(1+(\cosh(\frac{E}{2}(t-t_0))-\cosh(\frac{E}{2}t_0)) e^{-\abs{\xi}} \big)^{-1}\end{matrix}, & \text{for $\abs{\xi}\ge \gamma(0)$}, \\
\begin{matrix}2B(t) \tanh(\frac{\xi}{2}) \frac{\tanh^2(\frac{E}{4}(t-t_0))}{\tanh^2(-\frac{E}{4}t_0)}\\
\times\big(1-\tanh^2(\frac{\xi}{2}) \frac{\tanh^4(\frac{E}{4}(t-t_0))}{\tanh^4(-\frac{E}{4}t_0)} \big)^{-1}\end{matrix}, & \text{for $\abs{\xi}\le\gamma(0)$}, \end{cases}\\[3mm]
h(t,\xi)&=
\begin{cases}
\begin{matrix}A(t)^2 e^{-2\abs{\xi}}\\
\times\big(1+(\cosh(\frac{E}{2}(t-t_0))-\cosh(\frac{E}{2}t_0)) e^{-\abs{\xi}} \big)^{-3}\end{matrix}, & \text{for $\abs{\xi}\ge \gamma(0)$}, \\
\begin{matrix} B(t)^2\Big(1+\tanh^2(\frac{\xi}{2})\frac{\tanh^4(\frac{E}{4}(t-t_0))}{\tanh^4(-\frac{E}{4}t_0)}\Big)^2\\ 
\times\Big(1-\tanh^2(\frac{\xi}{2})\frac{\tanh^4(\frac{E}{4}(t-t_0))}{\tanh^4(-\frac{E}{4}t_0)}\Big)^{-3}\\
\times \cosh^{-2}(\frac{\xi}{2})\frac{\tanh^2(\frac{E}{4}(t-t_0))}{\tanh^2(-\frac{E}{4}t_0)}\end{matrix}, & \text{for $\abs{\xi}\le\gamma(0)$},
\end{cases} \\[3mm]
\bar h(t,\xi)&=h(t,\xi),\\[3mm]
P(t,\xi)&=\frac14 \int_{\Real} e^{-\vert
      y(t,\xi)-y(t,\eta)\vert}(2U^2y_\xi+ h)(t,\eta)d\eta, \\[3mm]
Q(t,\xi)&=-\frac14 \int_{\Real} \sgn{(\xi-\eta)}e^{-\vert
      y(t,\xi)-y(t,\eta)\vert}(2U^2y_\xi+h)(t,\eta)d\eta. 
\end{aligned}
\end{equation}
With the choice of initial characteristics we obtain 
\begin{equation}
y(t, \pm\gamma(0))=\pm \gamma(t),
\end{equation}
and hence at the peaks
\begin{equation}
U(t, \pm\gamma(0))=u(t,\pm\gamma(t))= \pm\frac{E}{2}\tanh\big(\frac{E}{2}(t-t_0)\big).
\end{equation}

As expected
\begin{equation}
\tau(\xi)=\begin{cases}
\infty, & \text{for $\abs{\xi}\ge\gamma(0)$}, \\
t_0, & \text{for $\abs{\xi}<\gamma(0)$}.
\end{cases}
\end{equation}
The important quantity is the first time there is wave breaking. By construction
\begin{equation}
t_0= \inf_\xi\tau(\xi).
\end{equation}
Next we consider the limits of these variables as  $t\uparrow t_0$:
\begin{equation} \label{eq:AP11}
\begin{aligned}
\lim_{t\uparrow t_0}y(t,\xi)&=
\begin{cases}
\xi+\sgn(\xi)\ln\big(1+(1-\cosh(\frac{E}{2})t_0) e^{-\abs{\xi}} \big), & \text{for $\abs{\xi}\ge \gamma(0)$},\\
0, & \text{for $\abs{\xi}< \gamma(0)$},
\end{cases}  \\
\lim_{t\uparrow t_0} U(t,\xi)&= 0, \\
\lim_{t\uparrow t_0} h(t,\xi)&= \begin{cases} 
0,  & \text{for $\abs{\xi}\ge \gamma(0)$}, \\
\frac{E^2}{4}\cosh^{-2}(\frac{\xi}{2})\tanh^{-2}(-\frac{E}{4}t_0),  & \text{for $\abs{\xi}\le\gamma(0)$}, 
\end{cases} \\
\lim_{t\uparrow t_0}P(t,\xi)&=\begin{cases}
\frac{E^2}{4}(1+(1-\cosh(\frac{E}{2}t_0))e^{-\abs{\xi}})^{-1}e^{-\abs{\xi}},  & \text{for $\abs{\xi}\ge \gamma(0)$},\\
\frac{E^2}{4}, & \text{for $\abs{\xi}\le\gamma(0)$}, 
\end{cases}\\
\lim_{t\uparrow t_0}Q(t,\xi)&=-\begin{cases}
\sgn(\xi)\frac{E^2}{4} (1+(1-\cosh(\frac{E}{2}t_0))e^{-\abs{\xi}})^{-1}e^{-\abs{\xi}},  & \text{for $\abs{\xi}\ge \gamma(0)$},\\
\frac{E^2}{4}\tanh^{-2}(-\frac{E}{4}t_0)\tanh(\frac{\xi}{2}), & \text{for $\abs{\xi}\le\gamma(0)$}.
 \end{cases}
\end{aligned}
\end{equation}
\medskip
At $t=t_0$ we introduce the parameter $\alpha\in[0,1]$, and define
\begin{equation}
\bar h(t_0,\xi)= (1-\alpha) \lim_{t\uparrow t_0}h(t,\xi), \quad 
h(t_0,\xi)=  \lim_{t\uparrow t_0}h(t,\xi).
\end{equation}

This implies that in Eulerian variables
\begin{equation} \label{eq:APet0A}
u(t_0,x)=0, \quad \mu(t_0)=(1-\alpha)E^2\delta_0, \quad \text{and} \quad \nu(t_0)=E^2 \delta_0,
\end{equation}
using the definitions    \eqref{eq:umudef2}, namely $\mu=y_\#(\bar h(\xi)\,d\xi)$, and 
      \eqref{eq:umudef5}, that is, 
      $\nu=y_\#(h(\xi)\,d\xi)$.

We will show that for  $t>t_0$ the solution  coincides with the peakon-antipeakon solution with the energy $E$ replaced by 
\begin{equation}
 \tilde E=\sqrt{1-\alpha}\, E.
\end{equation}

For $t>t_0$ the Lagrangian system reads (cf.~\eqref{eq:sysdiss})
\begin{subequations} \label{eq:lagrangePA}
    \begin{align} 
    y_t& =U, \\
     U_t&=-Q,\\
      y_{t,\xi}&=U_\xi,\\ U_{t,\xi}&= \frac12 \bar
      h+(U^2-P)y_\xi,\\
      h_t& =2(U^2-P)U_\xi, \\
       \bar h_t& =h_t,
    \end{align}
\end{subequations}
where  $P$ and $Q$ are given by \eqref{eq:Plag1} and \eqref{eq:Qlag1}, respectively.

\begin{figure}\centering
      \includegraphics[width=.45\textwidth]{mpeakon_1}
       \includegraphics[width=.45\textwidth]{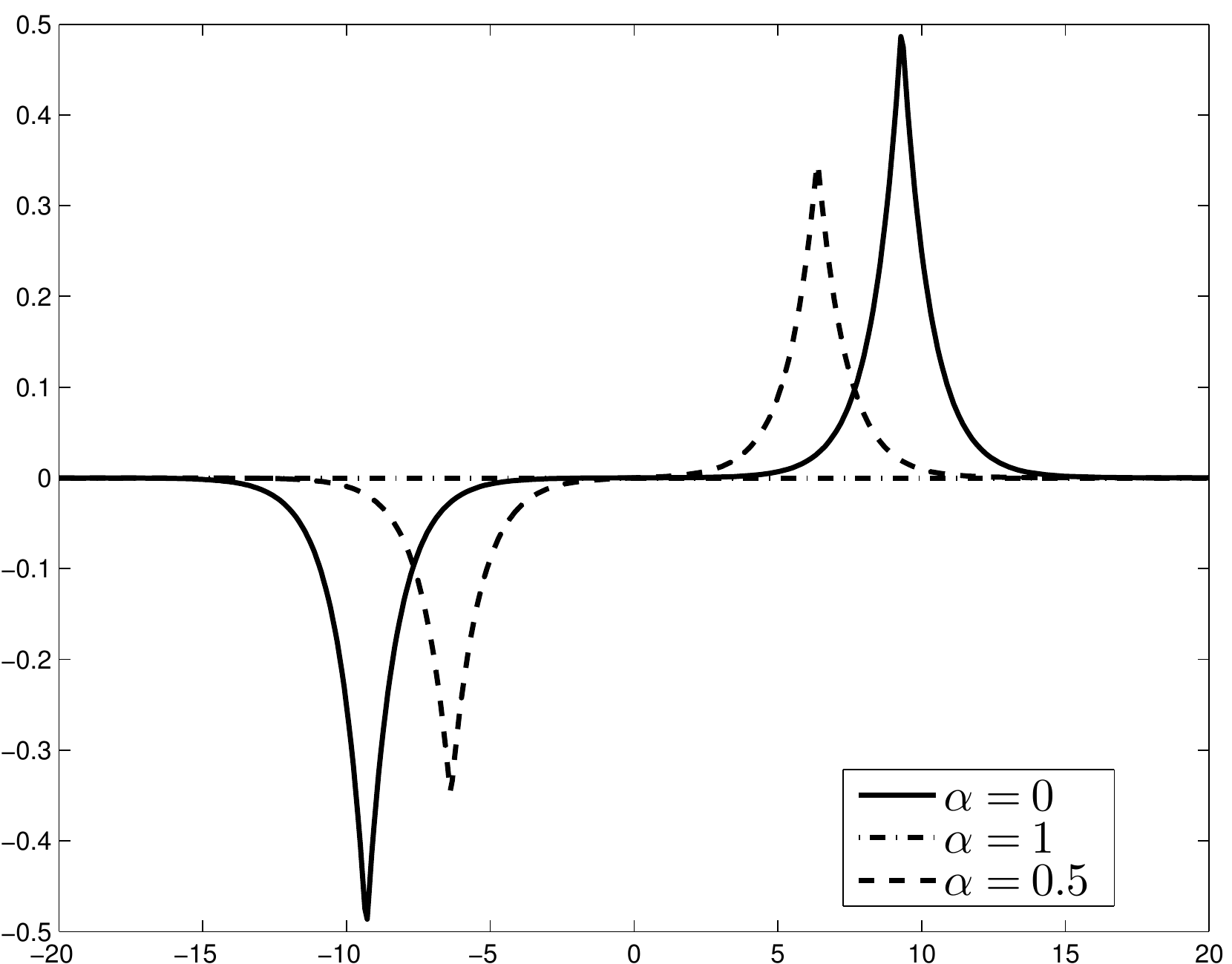}
      \caption{The peakon-antipeakon solution at times $0$ and $t>t_0$ for three different values of $\alpha$.} \label{fig:PA2}
    \end{figure}

In the fully dissipative case with $\alpha=1$, we get $\bar h(t_0)=0$, but also $(U^2-P)(t_0)=U_\xi(t_0)=Q(t_0)=0$, and hence
we have  for $t>t_0$:
\begin{equation}
\begin{aligned}
y(t,\xi)&=y(t_0,\xi), \\
U(t,\xi)&=0, \\
h(t,\xi)&=h(t_0,\xi), \\
\bar h(t,\xi)&=0. 
\end{aligned}
\end{equation}
This implies that in Eulerian variables
\begin{equation} \label{eq:APet0}
u(t,x)=0, \quad \mu(t)=0, \quad \nu(t)=E^2 \delta_0, \qquad t>t_0.
\end{equation}

In the general case $\alpha\in[0,1)$, it is difficult, as it was for $t<t_0$,  to solve the system \eqref{eq:lagrangePA} explicitly.  
However, we proceed as follows. Given \eqref{eq:APet0A}, we use \eqref{eq:Ldef1}, denoting the characteristics by 
$\tilde y(t_0)$, to determine the new initial characteristics. We find
\begin{equation}
\tilde y(t_0,\xi)= \begin{cases}
\xi, & \text{for $\xi\le 0$},\\
0, & \text{for $0\le \xi< \tilde E^2$}, \\
\xi -\tilde E^2, & \text{for $\xi\ge \tilde E^2$}.
\end{cases} 
\end{equation}
 Note that this function is related by relabeling to the characteristics we already have at $t=t_0$,  given by \eqref{eq:AP11}, namely
\begin{equation}\label{eq:AP23}
y(t_0,\xi)=
\begin{cases}
\xi+\sgn(\xi)\ln\big(1+(1-\cosh(\frac{E}{2}t_0)) e^{-\abs{\xi}} \big), & \text{for $\abs{\xi}\ge \gamma(0)$},\\
0, & \text{for $\abs{\xi}< \gamma(0)$}.
\end{cases} 
\end{equation}
To that end define
\begin{equation}
\begin{aligned}
g(\xi)&=y(t_0,\xi)+\bar H(t_0,\xi)=y(t_0,\xi)+  \int_{-\infty}^\xi \bar h(t_0,\eta) \, d\eta  \\
&= \begin{cases}
\xi-\ln\big(1+(1-\cosh(\frac{E}{2}t_0)) e^{\xi} \big), & \text{for $\xi\le -\gamma(0)$},\\
\frac{\tilde E^2}{2}\big(\tanh^{-2}(-\frac{E}{4}t_0)\tanh(\frac{\xi}{2})+1 \big), & \text{for $-\gamma(0)\le \xi< \gamma(0)$}, \\
\xi+\tilde E^2+\ln\big(1+(1-\cosh(\frac{E}{2}t_0)) e^{-\xi} \big) , & \text{for $\xi\ge \gamma(0)$}.
\end{cases} 
\end{aligned}
\end{equation}
Observe that $g$ is a monotonically increasing relabeling function  that satisfies 
\begin{equation}
\lim_{\xi\to-\gamma(0)}g(\xi)=0, \qquad \lim_{\xi\to\gamma(0)}g(\xi)=\tilde E^2,
\end{equation}
and thus
\begin{equation}
y(t_0,\xi)=\tilde y(t_0, g(\xi)).
\end{equation}
We are now given initial data $y(t_0)$, as well as $U(t_0)=0$ and $h(t_0)$. We claim that the solution, in Eulerian variables, is
\begin{equation}
u(t,x)= \begin{cases} 
\sgn(x)\tilde A(t)e^{-\abs{x}}, & \text{for $\abs{x}>\tilde\gamma(t)$},\\
\tilde B(t)\sinh x, &
\text{for $\abs{x}\le\tilde\gamma(t)$}, 
\end{cases} \qquad t>t_0, 
\end{equation}
where
\begin{equation}
\tilde A(t)= \frac{\tilde E}{2}\sinh(\frac{\tilde E}{2}(t-t_0)), \quad 
\tilde B(t)=\tilde E\sinh^{-1}(\frac{\tilde E}{2}(t-t_0)), \quad 
\tilde\gamma(t)=\ln\cosh(\frac{\tilde E}{2}(t-t_0)).
\end{equation}

\begin{figure}\centering
      \includegraphics[width=.45\textwidth]{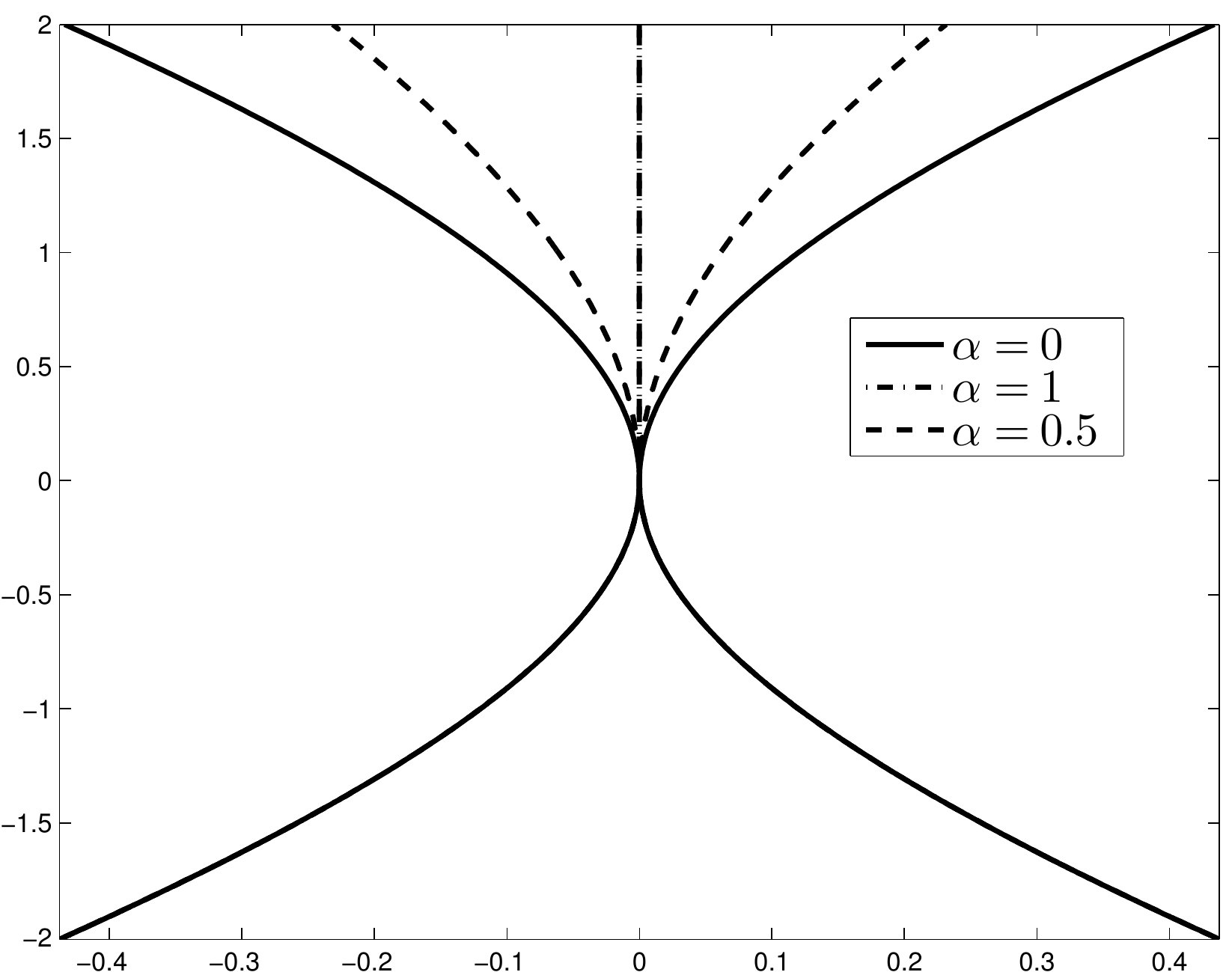}
      \caption{The characteristics $y(t,\pm \gamma(0))$ describing the location of the peaks of the peak-antipeakon solution for three different values of $\alpha$.} \label{fig:PA3}
    \end{figure}

To determine the characteristics we solve the equation $y_t=u\circ y$. We provide some details. Consider first the case 
$\xi\le -\gamma(0)$. Integrating we find
\begin{equation}
e^{-y(t,\xi)}- e^{-y(t_0,\xi)}= \cosh(\frac{\tilde E}{2}(t-t_0))-1.
\end{equation}
Inserting the expression \eqref{eq:AP23}, we find 
\begin{equation}
y(t,\xi)=
\xi-\ln\big(1+(\cosh(\frac{\tilde E}{2}(t-t_0))-\cosh(\frac{E}{2}t_0)) e^{\xi} \big).
\end{equation}
A similar calculation determines the case $\xi\ge \gamma(0)$. Assume now that $-\gamma(0)\le\xi\le \gamma(0)$.  Integrating the equation we find, for any small, positive $\varepsilon$, that
\begin{multline}
\ln\tanh(\frac12y(t,\xi)) - \ln\tanh(\frac12y(t_0+\varepsilon,\xi)) \\
= 2\big(\ln\tanh(\frac{\tilde E}{4}(t-t_0))- \ln\tanh(\frac{\tilde E}{4}\varepsilon) \big),
\end{multline}
which rewrites to
\begin{equation}
y(t,\xi)= 2\artanh\Big( \tanh(\frac12y(t_0+\varepsilon,\xi)) \frac{\tanh^2(\frac{\tilde E}{4}(t-t_0))}{\tanh^2(\frac{\tilde E}{4}\varepsilon)} \Big). 
\end{equation}
Taking $\varepsilon\downarrow 0$ we find that
\begin{equation}
y(t,\xi)= 2\artanh\Big(\tanh(\frac{\xi}{2}) \frac{\tanh^2(\frac{\tilde E}{4}(t-t_0))}{\tanh^2(-\frac{E}{4}t_0)}\Big).
\end{equation}
Note that this limit is rather delicate. As it involves repeated use of L'H\^opital's rule, one has to invoke the equations \eqref{eq:lagrangePA}  in order to compute the limit. 
We can now  determine the remaining Lagrangian quantities:
\begin{equation} \label{eq:lagrangeAP2}
\begin{aligned}
y(t,\xi)&= \begin{cases}
\xi+\sgn(\xi)\ln\big(1+(\cosh(\frac{\tilde E}{2}(t-t_0))-\cosh(\frac{E}{2}t_0)) e^{-\abs{\xi}} \big), 
& \text{for $\abs{\xi}\ge \gamma(0)$}, \\
2\artanh\Big(\tanh(\frac{\xi}{2}) \frac{\tanh^2(\frac{\tilde E}{4}(t-t_0))}{\tanh^2(-\frac{E}{4}t_0)}\Big), 
& \text{for $\abs{\xi}\le\gamma(0)$}, 
\end{cases} \\[3mm]
U(t,\xi)&=
 \begin{cases}
\begin{matrix}\sgn(\xi)\tilde A(t) e^{-\abs{\xi}}\\
\times\big(1+(\cosh(\frac{\tilde E}{2}(t-t_0))-\cosh(\frac{E}{2}t_0)) e^{-\abs{\xi}} \big)^{-1}\end{matrix}, & \text{for $\abs{\xi}\ge \gamma(0)$}, \\
\begin{matrix}2\tilde B(t) \tanh(\frac{\xi}{2}) \frac{\tanh^2(\frac{\tilde E}{4}(t-t_0))}{\tanh^2(-\frac{E}{4}t_0)}\\
\times\big(1-\tanh^2(\frac{\xi}{2}) \frac{\tanh^4(\frac{\tilde E}{4}(t-t_0))}{\tanh^4(-\frac{E}{4}t_0)} \big)^{-1}\end{matrix}, & \text{for $\abs{\xi}\le\gamma(0)$}, \end{cases}\\[3mm]
\bar h(t,\xi)&=
\begin{cases}
\begin{matrix}\tilde A(t)^2e^{-2\abs{\xi}}\\
\times\big(1+(\cosh(\frac{\tilde E}{2}(t-t_0))-\cosh(\frac{E}{2}t_0)) e^{-\abs{\xi}}\big)^{-3}\end{matrix}, & \text{for $\abs{\xi}\ge \gamma(0)$}, \\
\begin{matrix}\tilde  B(t)^2\Big(1+\tanh^2(\frac{\xi}{2})\frac{\tanh^4(\frac{\tilde E}{4}(t-t_0))}{\tanh^4(-\frac{E}{4}t_0)}\Big)^2\\
\times\Big(1-\tanh^2(\frac{\xi}{2})\frac{\tanh^4(\frac{\tilde E}{4}(t-t_0))}{\tanh^4(-\frac{E}{4}t_0)}\Big)^{-3}\\
 \times \cosh^{-2}(\frac{\xi}{2})\frac{\tanh^2(\frac{\tilde E}{4}(t-t_0))}{\tanh^2(-\frac{E}{4}t_0)} \end{matrix}, & \text{for $\abs{\xi}\le\gamma(0)$},
\end{cases} \\[3mm]
h(t,\xi)& = \begin{cases}
\bar h(t,\xi),  & \text{for $\abs{\xi}\ge \gamma(0)$}, \\  
\bar h(t,\xi)+\alpha\frac{E^2}{4}\cosh^{-2}\big(\frac{\xi}{2}\big)\tanh^{-2}\big(-\frac{E}{4}t_0\big),& \text{for $\abs{\xi}<\gamma(0)$}, 
            \end{cases}\\[3mm]
P(t,\xi)&=\frac14 \int_{\Real} e^{-\vert
      y(t,\xi)-y(t,\eta)\vert}(2U^2y_\xi+ \bar h)(t,\eta)d\eta, \\[3mm]
Q(t,\xi)&=-\frac14 \int_{\Real} \sgn{(\xi-\eta)}e^{-\vert
      y(t,\xi)-y(t,\eta)\vert}(2U^2y_\xi+\bar h)(t,\eta)d\eta. 
\end{aligned}
\end{equation}

To complete the calculation of the Eulerian variables, we use the definitions  \eqref{eq:umudef2} and  \eqref{eq:umudef5} to determine the measures. 
First we find
\begin{equation}
\mu(t)= u_x^2(t,x)\dx,\quad t>t_0.
\end{equation}
To determine $\nu(t)$ we write $h=\bar h+l_1$ (see \eqref{eq:deflj}) with $\bar h=u_x^2\circ y y_\xi$. We want to determine a function $l$ such that $l_1=l\circ y y_\xi$, which implies  
$\nu=(\bar h+l)dx$.  From \eqref{eq:lagrangeAP2} we see that $l_1(t,\xi)=0$ for $\abs{\xi}\ge \gamma(0)$. For  $\abs{\xi}< \gamma(0)$ we first observe  from  \eqref{eq:lagrangeAP2} that
\begin{align*}
y_\xi(t,\xi)&= \Big(1-\big(\tanh(\frac{\xi}{2}) \frac{\tanh^2(\frac{\tilde E}{4}(t-t_0))}{\tanh^2(-\frac{E}{4}t_0)}\big)^2 \Big)^{-1} 
\cosh^{-2}(\frac{\xi}{2}) \frac{\tanh^2(\frac{\tilde E}{4}(t-t_0))}{\tanh^2(-\frac{E}{4}t_0)} \\
&= \big(1-\tanh^{2}(\frac{y(t,\xi)}{2})  \big)^{-1} \cosh^{-2}(\frac{\xi}{2}) \frac{\tanh^2(\frac{\tilde E}{4}(t-t_0))}{\tanh^2(-\frac{E}{4}t_0)}, 
\end{align*}
using
\begin{equation*}
\tanh(\frac{y(t,\xi)}{2})= \tanh(\frac{\xi}{2})  \frac{\tanh^2(\frac{\tilde E}{4}(t-t_0))}{\tanh^2(-\frac{E}{4}t_0)}.
\end{equation*}
Thus for  $\abs{\xi}< \gamma(0)$
\begin{align*}
l(t,y(t,\xi))= \frac{l_1(t,\xi)}{y_\xi(t,\xi)}&= \alpha\frac{E^2}{4}\frac{\big(1-\tanh^{2}(\frac{y(t,\xi)}{2})\big) \cosh^{2}(\frac{\xi}{2})\tanh^{2}(-\frac{E}{4}t_0)}{\cosh^{2}\big(\frac{\xi}{2}\big)\tanh^{2}(-\frac{E}{4}t_0)\tanh^{2}(\frac{\tilde E}{4}(t-t_0))}\\
&= \alpha\frac{E^2}{4}\big(1-\tanh^{2}(\frac{y(t,\xi)}{2})\big) \tanh^{-2}(\frac{\tilde E}{4}(t-t_0)),
\end{align*}
thus we infer that
\begin{equation*}
l(t,x)=\begin{cases}
0,  & \text{for $\abs{x}\ge \tilde\gamma(t)$}, \\  
\alpha\frac{E^2}{4}\big(1-\tanh^{2}(\frac{x}{2})\big) \tanh^{-2}(\frac{\tilde E}{4}(t-t_0)),  & \text{for $\abs{x}\le\tilde\gamma(t)$}.
\end{cases}
\end{equation*}
Finally, we get the following expression 
\begin{equation}
\nu(t)=\begin{cases}
 u_x^2(t,x)\dx, & \text{for $\abs{x}\ge\tilde \gamma(t)$}, \\ 
\Big(u_x^2(t,x)+ \alpha\frac{E^2}{4}\tanh^{-2}\big(\frac{\tilde E}{4}(t-t_0)\big)\big(1-\tanh^2\big(\frac{x}{2}\big)\big)\Big)\dx, & \text{for $\abs{x}\le\tilde \gamma(t)$},
                                     \end{cases}
\end{equation} 
for $t>t_0$.


\end{document}